\documentclass[11pt, reqno]{amsart}

\usepackage[
    a4paper,
    top=32mm,
    bottom=32mm,
    left=25mm,
    right=25mm
]{geometry}

\usepackage{amsmath,amsthm,amsfonts,amscd,amssymb}
\usepackage{graphicx,color,bm,mathtools}

\usepackage[colorlinks=true, citecolor=blue, filecolor=black, linkcolor=black, urlcolor=black]{hyperref}

\usepackage{caption}
\usepackage{subcaption}

\usepackage[T1]{fontenc}
\usepackage{lmodern}

\title[{\fontsize{7pt}{7pt}\selectfont Smooth multi-trace statistics.}]
{Smooth Multi-Trace Statistics of Classical Ensembles: Large $N$ Expansions, Cumulants, and Matrix Integrals.}
\author {Beno\^\i{}t Collins}
\address{Department of Mathematics, Kyoto University} 
\email{collins@math.kyoto-u.ac.jp}
\author {Manasa Nagatsu}
\address{Department of Mathematics, Kyoto University} 
\email{nagatsu.manasa.64s@st.kyoto-u.ac.jp}

\numberwithin{equation}{section}

\theoremstyle{plain}
\newtheorem{lemma}{Lemma}[section]
\newtheorem{theorem}[lemma]{Theorem}
\newtheorem{proposition}[lemma]{Proposition}

\theoremstyle{definition}
\newtheorem{definition}[lemma]{Definition}

\theoremstyle{remark}
\newtheorem{remark}[lemma]{Remark}

\DeclareMathOperator{\tr}{tr} 
\DeclareMathOperator{\Tr}{Tr}
\DeclareMathOperator{\Wg}{Wg}

\DeclareMathOperator{\Moeb}{Moeb}
\DeclareMathOperator{\U}{U}
\DeclareMathOperator{\Ort}{O}

\DeclareMathOperator{\Part}{Part}
\DeclareMathOperator{\Sp}{Sp}

\newcommand{\E}{{\mathbb{E}}}
\renewcommand{\P}{\mathbb{P}}
\newcommand{\C}{{\mathbb{C}}}
\newcommand{\R}{{\mathbb{R}}}
\newcommand{\Z}{{\mathbb{Z}}}
\newcommand{\N}{{\mathbb{N}}}
\renewcommand{\d}{\mathrm{d}}
\newcommand{\e}{\mathrm{e}}

\normalsize

\begin{document}

\begin{abstract}
We consider expectations of the form $\E[\tr h_1(X_1^N) \cdots \tr h_r(X_r^N)]$, where $X_i^N$ are self-adjoint polynomials in various independent classical random matrices and $h_i$ are smooth test function and obtain a large $N$ expansion of these quantities, building on the framework of polynomial approximation and Bernstein-type inequalities recently developed by Chen, Garza-Vargas, Tropp, and van Handel.
As applications of the above, we prove the higher-order asymptotic vanishing of cumulants for smooth linear statistics, establish a Central Limit Theorem, and demonstrate the existence of formal asymptotic expansions for the free energy and observables of matrix integrals with smooth potentials.
\end{abstract}

\maketitle

\tableofcontents

\section{Introduction}
The asymptotic behavior of large random matrices as the dimension $N$ tends to infinity is a central subject in modern probability theory and mathematical physics. 
A fundamental feature of many matrix ensembles is the existence of a topological expansion (or $1/N$ expansion) for the expectation of the traces. 
Specifically, for these random matrices $X_1^N, \dots, X_d^N$, quantities of the form $\E [\tr P(X^N)]$ often admit a power series expansion in $N^{-1}$ (or $N^{-2}$ for GUEs or Haar-distributed unitary matrices because of the symmetry), where the coefficients relate to the enumeration of graphs. 
This connection between matrix models and physical or statistical models has been studied, for example, in \cite{tHooft74_MR413809, BIPZ78_MR471676, HarerZagier86_MR848681}.

The existence of these expansions is well-established for \textit{polynomial} test functions. 
In the polynomial setting, these expansions have been mainly derived using combinatorial techniques, including the Weingarten calculus, which provides a systematic way to compute integrals over compact groups such as the unitary group. 
The theory of Weingarten calculus was first developed in the theoretical physics context \cite{Weingarten}, and later formalized mathematically, see for example \cite{Collins2003,CS06_MR2217291}.

However, extending these results to broader classes of test functions, such as smooth ($C^\infty$) functions, presents significant analytical challenges.
In the context of the partition function of random matrices with smooth potentials, Ercolani and McLaughlin \cite{EM03_MR1953782} used Riemann-Hilbert techniques to establish asymptotics of the partition function. 
Recently, the study of the asymptotic expansions for smooth test functions has been significantly developed in the context of strong convergence of random matrices, see for example the works \cite{Schultz05_MR2117954, HT12_MR2922846, Parraud23_MR4567374, parraud2023_unitary, vH_newapproach1, VH_newapproach2}. 
In particular, Parraud established the asymptotic expansion for statistics of smooth functions, first for the GUE \cite{Parraud23_MR4567374} and subsequently for Haar-distributed unitary matrices \cite{parraud2023_unitary}, which successfully bridged the gap between algebraic combinatorial results and functional calculus, proving that quantities like $\E[\tr f(P(\bm{X}^N))]$ admit an asymptotic expansion for smooth $f$.
Subsequently, Chen, Garza-Vargas, Tropp, and van Handel introduced a new approach to asymptotic expansions and strong convergence based on polynomial inequalities and optimal interpolation bounds for $1/N$-samples in \cite{vH_newapproach1, VH_newapproach2}. While their work focused primarily on strong convergence for polynomial norms with matrix coefficients of large dimension, the underlying techniques are powerful and yield asymptotic expansion results.

While these works focused on the \textit{single-trace} case, we extend the analysis to the \textit{multi-trace} case, dealing with expectations of products of traces of the form:
\begin{align*}
\E\left[\tr h_1(X_1^N) \cdots \tr h_r(X_r^N)\right],
\end{align*}
where each $X_i^N = P_i(\text{independent random matrices})$ is a self-adjoint polynomial and the $h_i$ are smooth test functions. 
Such multi-trace observables are indispensable for analyzing the fluctuation properties of the ensemble, including the covariance structure and higher-order cumulants, which characterize the asymptotic independence of the spectral statistics.

In this paper, we focus on the cases of independent standard Gaussian Ensembles and Haar-distributed matrices.
Our analysis relies heavily on the powerful framework developed in \cite{vH_newapproach1,VH_newapproach2}, which is mainly based on the sharp Bernstein-type inequalities and interpolation methods based on ``soft'' arguments.
The extension to multi-trace settings requires careful tracking of the multilinear structure and the interaction between multiple independent random matrices and test functions.

By adapting these techniques to our setting, we develop a unified approach that:
\begin{enumerate}
    \item handles \textit{multi-trace} observables naturally via generalized polynomial approximations.
    \item treats both the \textit{unbounded} (Gaussian Ensembles) and \textit{bounded} (Haar-distributed matrices) cases within a consistent framework, dealing with the rational structure of the unitary group via the rational Bernstein inequalities established in the recent literature.
\end{enumerate}

As applications of our main theorems, we derive three key results:
\begin{enumerate}
    \item We prove the higher-order asymptotic vanishing of cumulants for smooth test functions, extending the universality of second-order freeness.
    \item We establish a Central Limit Theorem (CLT) for the fluctuations of these smooth linear statistics.
    \item We apply our results to matrix integrals with smooth potentials, establishing the existence of the formal asymptotic expansion for the free energy and observables in such models.
\end{enumerate}

\subsection*{Organization of the paper}
The paper is organized as follows.
In Section \ref{sec:prelim}, we introduce the necessary notations, the definitions, and recall the key tools from \cite{VH_newapproach2}, including the Bernstein inequalities.
Section \ref{sec:GUE} is dedicated to the GUE case; we establish a priori bounds and prove the asymptotic expansion for smooth multi-trace statistics.
In Section \ref{sec:unitary}, we treat the Haar unitary case, introducing the rational expression for expectations and applying the rational Bernstein inequality.
Finally, in Section \ref{sec:Application}, we present applications of our main theorems, including the asymptotic vanishing of cumulants, the Central Limit Theorem, and the formal expansion of matrix integrals with smooth potentials.

\subsection*{Acknowledgements}
The authors were supported by JSPS Grant-in-Aid Scientific Research (A) no.
25H00593, and Challenging Research (Exploratory) no. 20K20882 and 23K17299.
They are grateful to Ramon van Handel for useful comments on a preliminary version of this manuscript. 

\section{Preliminaries}\label{sec:prelim}

\subsection{Notations and algebraic preliminaries}
In this section, we setup notations, and recall useful facts, in particular about incidence algebras, as we will need them for cumulants and matrix integrals.  

Throughout the paper, we use the notations
$\mathbb{N} := \{1,2,3,\ldots\}, \mathbb{Z}_{\ge 0} := \{0,1,2,\ldots\}$, 
and for $N\in\mathbb{N}$, we write $[N] := \{1,\ldots,N\}$.
The symmetric group on $N$ letters is denoted by $S_N$, and the free group on $d$ generators by $F_d$.
We denote by $\mathrm{M}_N(\mathbb{C})$ the space of $N\times N$ complex matrices.  
For a matrix $M$, we write $\Tr M$ for its trace, $\tr M := \frac{1}{N}\Tr M$ for its normalized trace, and $\|M\|$ for its operator norm.    
Unless otherwise stated, $C,c>0$ denote universal constants that may vary from line to line.  

We recall the definition of set partitions, and refer to \cite{Zvonkin97_MR1492512}, \cite{Rota64_MR174487}, \cite{Stanley97_MR1442260}.
If $Z$ is a finite set, a partition of $Z$ is a set $\pi=\left\{B_{1}, \ldots, B_{r}\right\}$ of pairwise disjoint non-empty subsets $B_{j}$ of $Z$ whose union is all of $Z$. These sets $B_{j}$ are called the blocks of the partition, and the number of blocks of $\pi$ is simply denoted $\# \pi$. 
The set of partitions of the set of $k$ integers $[k] =\{1, \dots, k\}$, denoted by $\Part(k)$, forms a lattice with respect to the partial order, the meet operation $\wedge$ and the join operation $\vee$ are defined as follows.  
\begin{itemize}
    \item partial order $\leq $: \ For $\pi_1, \pi_2 \in \Part(k)$, we call $\pi_1 \leq \pi_2 $ if and only if each block of $\pi_1$ is contained in some block of $\pi_2$. The maximum partition is denoted by $1_k = \{\{1,\dots , k\}\}$, and the minimum partition is denoted by $0_k=\{\{1\}, \dots , \{k\}\}$ .
    \item meet operation $\wedge$: \ For $\pi_1, \pi_2 \in \Part(k)$, $\pi_1 \wedge \pi_2$ is the greatest lower bound of $\pi_1$ and $\pi_2$.
    \item join operation $\vee$: For $\pi_1, \pi_2 \in \Part(k)$, $\pi_1 \vee \pi_2$ is the least upper bound of $\pi_1$ and $\pi_2$.
\end{itemize}

We define incidence algebra on the lattice $\Part(k)$,
\[I(\Part(k))=\{f:\Part(k) \times \Part(k) \to \mathbb{C}\ | \ f(\pi_1,\pi_2)=0 \mbox{ if } \pi_{1} \nleq \pi_{2}\}\]
with an associative convolution *, for any $\pi_1, \pi_2 \in \Part(k)$,
\begin{equation*}
f * g\left(\pi_{1}, \pi_{2}\right):=\sum_{\pi_{3}: \pi_{1} \leq \pi_{3} \leq \pi_{2}} f\left(\pi_{1}, \pi_{3}\right) g\left(\pi_{3}, \pi_{2}\right). 
\end{equation*}
$I(\Part(k))$ has the unit $\delta$, Kronecker delta, 
\[
\delta(\pi_1, \pi_2) 
= \begin{cases}
    1 & \mbox{if } \pi_1 = \pi_2,\\
    0 & \mbox{otherwise}.
\end{cases}
\]

We define the Zeta function $\zeta$ by 
\begin{equation*}
\zeta\left(\pi_{1}, \pi_{2}\right)
= \begin{cases}
    1 & \mbox{if }\pi_{1} \leq \pi_{2},\\
    0 & \mbox{otherwise},
\end{cases}
\end{equation*}
for $\pi_1, \pi_2 \in \Part(k)$.

We also introduce the M\"obius function, $\Moeb$. We first define \[ \Moeb(0_k, 1_k)=(-1)^{k-1} (k-1)!. \] 
For any $\pi_{1}, \pi_{2} \in \Part(k)$, when we suppose that $\pi_2 = \{B_1, \dots , B_{\# \pi_2}\}$ and that each block $B_i$ of $\pi_2$ is partitioned into $\lambda_i$ blocks in $\pi_1$, the interval $\left[\pi_{1}, \pi_{2}\right]$ in the lattice $\Part(k)$ is isomorphic to
$\left[0_{\lambda_1}, 1_{\lambda_1}\right] \times \cdots \times [0_{\lambda_{\# \pi_2}}, 1_{\lambda_{\# \pi_2}}]$. Now we define  
$$
\Moeb\left(\pi_{1}, \pi_{2}\right)
=\prod_{i=1}^{\# \pi_2} \Moeb(0_{\lambda_i}, 1_{\lambda_i})
=\prod_{i=1}^{\# \pi_2}(-1)^{\lambda_i-1}(\lambda_i-1)!.
$$

We need the fact 
that the convolution product of the M\"obius function $\Moeb$ and the Zeta function $\zeta$ is equal to the Kronecker delta $\delta$,
\begin{equation}\label{MobiusInversion}
\Moeb * \zeta=\delta.
\end{equation}
We call this {\em M\"obius inversion formula}. 
(See \cite{Stanley97_MR1442260} Section 3.7)

Finally, we use the multi-index notation $\mathbf{i}=(i_1,\dots,i_k), \ \mathbf{j}=(j_1,\dots,j_k)$.
For a multi-index $\mathbf{i}= (i_1,...,i_k) \in \{1, \dots , N\}^k$, we view $\mathbf{i}$ as a level partition $\Pi_\mathbf{i} \in \Part(k)$ so that $1 \leq l,m \leq k$ belongs to the same block of $\Pi_\mathbf{i}$ if and only if $i_l=i_m$.

\subsection{Definitions and asymptotic freeness of random matrices}

We begin by defining the classical random matrix ensembles.
\begin{definition}
 The Gaussian Orthogonal Ensemble (GOE) and the Gaussian Unitary Ensemble (GUE) as the probability measures on the space of $N \times N$ real symmetric and complex Hermitian matrices, respectively, with densities with respect to the Lebesgue measure proportional to:
\[
\exp\left( -\frac{N}{4} \operatorname{Tr}(H^2) \right) \quad (\text{GOE})
\quad \text{and} \quad
\exp\left( -\frac{N}{2} \operatorname{Tr}(H^2) \right) \quad (\text{GUE}).
\]   
\end{definition} 

For the Gaussian Symplectic Ensemble (GSE), we avoid working directly with quaternions by representing them as $2 \times 2$ complex matrices. We identify the skew-field $\mathbb{H}$ with the matrix subalgebra spanned by the basis:
\[
\mathbf{1}=\begin{pmatrix}
1 & 0 \\
0 & 1
\end{pmatrix}, \quad 
\mathbf{i}=\begin{pmatrix}
i & 0 \\
0 & -i
\end{pmatrix}, \quad 
\mathbf{j}=\begin{pmatrix}
0 & 1 \\
-1 & 0
\end{pmatrix}, \quad 
\mathbf{k}=\begin{pmatrix}
0 & i \\
i & 0
\end{pmatrix}.
\]
Under this identification, the space of $N \times N$ quaternion Hermitian matrices maps to the space of $2N \times 2N$ complex Hermitian matrices $H$ composed of $N^2$ blocks of size $2\times 2$, where each block is a linear combination of $\{\mathbf{1}, \mathbf{i}, \mathbf{j}, \mathbf{k}\}$. We denote this space of self-dual matrices by $\mathcal{H}_{2N}^{\mathrm{sp}}$.

\begin{definition}
The \emph{GSE} is defined as the probability measure on $\mathcal{H}_{2N}^{\mathrm{sp}}$ with density with respect to the Lebesgue measure proportional to:
\[
\exp\left( -\frac{N}{2} \operatorname{Tr}(H^2) \right),
\]
where the trace is taken over the $2N$-dimensional representation.    
\end{definition}
These normalizations are chosen such that the limiting spectral measure is the Wigner semicircle law supported on $[-2, 2]$.
A fundamental property of these Gaussian ensembles is 
the invariance of their distribution under conjugation by their respective symmetry groups.

We also introduce the Haar-distributed random matrices, which are defined on the classical compact groups $\U(N)$, $\Ort(N)$, and $\Sp(N)$ (the groups of unitary, orthogonal, and symplectic matrices, respectively). It is well known that each of these groups admits a unique left- and right-invariant probability measure, known as the normalized Haar measure.

Next, we define the operator-algebraic counterparts that appear in the large-$N$ limit of our random matrix models.
In this paper, we adopt the framework of a $C^*$-probability space $(\mathcal{A},\tau)$, which consists of a unital $C^*$-algebra $\mathcal{A}$ and a faithful trace $\tau$.

\begin{definition}
Let $(\mathcal{A},\tau)$ be a $C^*$-probability space.
\begin{itemize}
    \item A family of elements $s_1,\ldots,s_d \in \mathcal{A}$ is called a \emph{free semicircular family} if they are freely independent and each $s_i$ follows the standard semicircle law supported on $[-2,2]$.
    \item A family of elements $u_1,\ldots,u_d \in \mathcal{A}$ is called a family of \emph{free Haar unitaries} if they are freely independent and each $u_i$ is a unitary operator whose spectral distribution is the uniform measure on the unit circle.
\end{itemize}
\end{definition}

For a comprehensive treatment of free independence, see \cite{NS06_MR2266879}, \cite{MR3585560_MingoSpeicher17}. 
We also recall that the fundamental connection between these abstract variables and classical random matrices was established by Voiculescu \cite{Voiculescu91_MR1094052}, who proved the asymptotic freeness of independent classical random matrices.
We now restate a simple result of the asymptotic freeness for the case of GUEs and Haar unitary matrices (or more generally, constant matrices rotated by independent Haar unitaries).

\begin{theorem}[Asymptotic freeness for GUE and Haar unitaries]
\label{thm:asymptotic_freeness}
Let $\bm{G}^N=(G_1^N,\ldots,G_d^N)$ be independent $N\times N$ GUE matrices, and let $\bm{U}^N=(U_1^N,\ldots,U_d^N)$ be independent Haar-distributed random matrices in $\U (N)$.
Let $\bm{s}=(s_1,\ldots,s_d)$ and $\bm{u}=(u_1,\ldots,u_d)$ be a free semicircular family and a family of free Haar unitaries in $(\mathcal{A},\tau)$, respectively.

Then, for any non-commutative polynomial $P\in \C \langle x_1,...,x_d \rangle$, the normalized traces converge in expectation as follows:
\begin{align}
\lim_{N\to\infty} \E\big[\tr P(\bm{G}^N)\big] 
= (\tr\otimes\tau)\big[P(\bm{s})\big],
\end{align}
and for any non-commutative polynomial $P\in \C \langle x_1,...,x_d, x_1^*,...,x_d^* \rangle$, we also have
\begin{align}
\lim_{N\to\infty} \E\big[\tr P(\bm{U}^N, \bm{U}^{N*})\big] 
= (\tr\otimes\tau)\big[P(\bm{u}, \bm{u}^*)\big].
\end{align}
\end{theorem}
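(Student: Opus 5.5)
By multilinearity it suffices to treat monomials, i.e.\ words $w=x_{i_1}\cdots x_{i_k}$ in the GUE case and words $w=x_{i_1}^{\epsilon_1}\cdots x_{i_k}^{\epsilon_k}$ with $\epsilon_l\in\{1,*\}$ in the Haar case. For such a word I will compute $\E[\tr w(\bm{G}^N)]$ and $\E[\tr w(\bm{U}^N,\bm{U}^{N*})]$ exactly as functions of $N$. I will then show that the $N\to\infty$ limit is a combinatorial quantity that coincides with the moment-cumulant description of $\tau(w(\bm s))$, respectively $\tau(w(\bm u,\bm u^*))$. Here the $\tr\otimes\tau$ in the statement is read with $\tr$ acting trivially on scalar coefficients.

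\textbf{GUE case.} Expand
\[
\E[\tr w(\bm G^N)]=N^{-1}\sum_{j_1,\dots,j_k}\E\big[(G_{i_1})_{j_1j_2}\cdots(G_{i_k})_{j_kj_1}\big].
\]
With the normalisation $\exp(-\frac N2\Tr H^2)$ the covariances are $\E[(G_a)_{pq}(G_b)_{rs}]=\delta_{ab}\delta_{ps}\delta_{qr}/N$. The Wick formula then gives a sum over pair partitions $\pi$ of $[k]$ that respect the colouring $l\mapsto i_l$. Viewing $\pi$ as an involution in $S_k$ and letting $\gamma=(1\,2\cdots k)$, each $\pi$ contributes $N^{\#(\gamma\pi)-1-k/2}$. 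By the genus inequality $\#(\gamma\pi)\le k/2+1$, with equality exactly when $\pi$ is non-crossing. Hence
\[
\E[\tr w]=\#\{\pi\in NC_2(k)\ \text{colour-respecting}\}+O(N^{-2}).
\]
The leading count is precisely $\tau(s_{i_1}\cdots s_{i_k})$ for a free semicircular family. This follows from the free moment-cumulant formula, since the only nonzero free cumulants are $\kappa_2(s_a,s_b)=\delta_{ab}$.

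\textbf{Haar unitary case.} Expand the trace in the same way. By independence the expectation factorises over the colours $a$, and each factor is $\E[u_{p_1q_1}\cdots u_{p_nq_n}\bar u_{p'_1q'_1}\cdots\bar u_{p'_mq'_m}]$ for a single Haar unitary $U_a$. Such a factor vanishes unless $n=m$, and otherwise equals
\[
\sum_{\sigma,\rho\in S_n}\delta_{p,p'\circ\sigma}\,\delta_{q,q'\circ\rho}\,\Wg(\sigma\rho^{-1},N)
\]
by Weingarten calculus. I will use two facts: $\Wg(\sigma,N)=\Moeb(\sigma)N^{-n-|\sigma|}(1+O(N^{-2}))$, where $\Moeb(\sigma)$ is the product of signed Catalan numbers over the cycles of $\sigma$; and the free trace sums contribute $N^{\#(\text{cycles})}$. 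Power counting then shows that every term is $O(1)$, and that the $O(1)$ terms are exactly those where the pairings are non-crossing relative to the cyclic word structure.

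\textbf{Main obstacle.} The hard step is identifying the leading terms with the free moments. Here I would apply the known description of $\tau$ on words in free Haar unitaries: by freeness, $\tau(w(\bm u,\bm u^*))$ is computed through the alternating-centred-products rule. It equals the number of ways to reduce $w$ to the identity in the free group by cancelling adjacent pairs $u_au_a^*$ in a non-crossing manner, weighted by the free cumulants of a Haar unitary. Those cumulants are again the signed Catalan numbers $\Moeb$, so the leading Weingarten terms should match this formula term by term.

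If the direct matching proves too messy, the fallback I would use is the route the paper later adopts. This is the standard argument via the bound $\|\E[U^{\otimes n}\otimes\bar U^{\otimes n}]-\text{leading projection}\|=O(N^{-1})$. Combined with the Schwinger–Dyson equations for Haar unitaries in the limit $N\to\infty$, it forces any limit point of the moments to satisfy the free Schwinger–Dyson equations. These equations uniquely determine $\tau$ on words in $\bm u$, $\bm u^*$. Since the moments are uniformly bounded by $1$, compactness of the set of possible moment limits finishes the argument.
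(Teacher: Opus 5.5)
The paper gives no proof of this statement. It is recalled from Voiculescu \cite{Voiculescu91_MR1094052}, with pointers to Weingarten calculus \cite{Collins2003,CS06_MR2217291} and to \cite{NS06_MR2266879}. The paper never argues via Schwinger--Dyson equations; its only Weingarten computation, Lemma~\ref{lemma:RationalExpression}, concerns rationality in $1/N$. So your fallback is not ``the route the paper later adopts''. Your GUE half is a complete and standard argument: the Wick formula with covariance $\delta_{ab}\delta_{ps}\delta_{qr}/N$, the bound $\#(\gamma\pi)\le k/2+1$ with equality iff $\pi\in NC_2(k)$, and the free moment--cumulant formula with $\kappa_2(s_a,s_b)=\delta_{ab}$. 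It is the same genus expansion the paper invokes for Proposition~\ref{prop:ExpansionPolyGUE}.

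The Haar half, however, has a genuine gap. Everything rests on two assertions for which you give no argument. The first is that each term $N^{-1}N^{\#\{\text{index classes}\}}\prod_a\Wg(\sigma_a\rho_a^{-1},N)$ is $O(1)$. The second is that the order-one terms are the ``non-crossing'' ones; you do not say which permutation's cycles count the free indices. Both require a real genus/triangle-inequality estimate in the symmetric group, and they are precisely the content of Collins's theorem. The identification with $\tau$ is then left at ``should match''.

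The fallback does not close this either. What lets the Schwinger--Dyson equations pass to the limit is the factorization $\E[\tr A\,\tr B]-\E[\tr A]\,\E[\tr B]\to0$. That is a variance/concentration bound, and you never address it.

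Part of your ``main obstacle'' is self-inflicted. $\tau(w(\bm{u},\bm{u}^*))$ is simply $1$ if $w=e$ in $F_d$ and $0$ otherwise, because a reduced nontrivial word is an alternating product of centred elements $u_a^{k}$, $k\neq0$. On the matrix side, $w=e$ forces $w(\bm{U}^N,\bm{U}^{N*})=I$ identically. For $w\neq e$ you may pass to its reduced form, and you only need $\E[\tr w(\bm{U}^N,\bm{U}^{N*})]\to0$.

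Grant the power counting in the form ``in an order-one term the connected components of the row and column pairings form a non-crossing partition of the letters''. Then the limit is a support statement, not a weight-matching one:
\begin{itemize}
\item each component is single-coloured, with equally many $U_a$ and $U_a^*$;
\item a non-crossing partition has an interval block, and that block would contain an adjacent $U_aU_a^*$ or $U_a^*U_a$, contradicting reducedness.
\end{itemize}
Hence no order-one terms exist, and no Catalan weights need to be matched. What you must still supply, or cite from \cite{Collins2003,CS06_MR2217291}, is the power-counting lemma itself.
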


The asymptotic freeness result can be extended to general gaussian random matrices, and also to the other Haar-distributed random matrices using Weingarten calculus (see \cite{Collins2003, CS06_MR2217291}), and for a comprehensive treatment of these asymptotic freeness, see \cite{NS06_MR2266879}. 

\subsection{Technical preliminary results.}

We begin by introducing some notation and functional-analytic conventions that will be used repeatedly.

Let $\mathcal{P}_q$ be the space of real polynomials $h:\R \to\R$ of degree at most $q$, and let $\mathcal{P}$ denote the space of all real polynomials.  
For a univariate function $h$, we write $h^{(m)}$ for its $m$-th derivative.  
Let $K$ be a compact subset of $\R^r$. 
We denote by $C^m(K)$ the space of functions $\phi: K \to \R$ which admit an extension to a $C^m$-function on some open neighborhood of $K$ in $\R^r$. 
For $\phi \in C^m(K)$, we define the norms:
\begin{equation}\label{def-Cm-norm}
    \|\phi\|_{C^m(K)} := \sum_{0 \le |\bm{\alpha}|\le m} \sup_{\bm{x}\in K} |\partial^{\bm{\alpha}} \phi(\bm{x})|,
\qquad 
\|\phi\|_{K} := \|\phi\|_{C^0(K)} = \sup_{\bm{x}\in K} |\phi(\bm{x})|.
\end{equation}
Here, $\bm{\alpha} = (\alpha_1, \dots, \alpha_r) \in \Z_{\geq 0}^r$ is a multi-index, $|\bm{\alpha}| = \sum_{j=1}^r \alpha_j$, and $\partial^{\bm{\alpha}} = \partial_{x_1}^{\alpha_1} \dots \partial_{x_r}^{\alpha_r}$.

For later use, we recall a few results, representing some key ideas from  
 \cite{VH_newapproach2}.

\begin{lemma}[Bernstein inequality, Lemma 2.1 in \cite{VH_newapproach2}]\label{lemma:BernsteinIneq}
For any $h\in \mathcal{P}_q$ and $\delta>0$, we have
\begin{align*}
	|h^{(m)}(x)| \leq 
	\left( \frac{2q}{\delta \sqrt{1-(x/\delta)^2}} \right)^m 
	\|h\|_{[-\delta, \delta]}
	\quad\text{for all }x\in (-\delta, \delta).
\end{align*}
\end{lemma}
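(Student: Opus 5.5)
The plan is to reduce to the classical Bernstein and Markov inequalities on $[-1,1]$ by a linear change of variables, and then iterate. Set $g(t) := h(\delta t)$, so $g\in\mathcal{P}_q$, $\|g\|_{[-1,1]} = \|h\|_{[-\delta,\delta]}$ and $g^{(m)}(t) = \delta^m h^{(m)}(\delta t)$. It then suffices to prove
\[
|g^{(m)}(t)| \le \left(\frac{2q}{\sqrt{1-t^2}}\right)^m \|g\|_{[-1,1]}, \qquad t\in(-1,1).
\]
Substituting $t = x/\delta$ recovers the statement.

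For $m=1$ I will use the classical Bernstein inequality $|g'(t)| \le \frac{q}{\sqrt{1-t^2}}\|g\|_{[-1,1]}$. This follows from the trigonometric Bernstein inequality $\|T'\|_\infty \le q\|T\|_\infty$, applied to $T(\theta) = g(\cos\theta)$, a trigonometric polynomial of degree $\le q$. Since $T'(\theta) = -\sin\theta\, g'(\cos\theta)$, we get $|g'(t)|\sqrt{1-t^2}\le q\|g\|$. This already beats the claim by a factor $2$.

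For general $m$, naive iteration fails because $g'$ is no longer controlled uniformly on $[-1,1]$. I will fix $t\in(-1,1)$ and work on a shrunken interval. Let $I_k := [-\rho_k,\rho_k]$ with $\rho_k$ decreasing from $1$ toward $|t|$ in $m$ equal steps. The cleanest choice is to parametrize by angle, or simply take $\rho_k^2 = 1 - \frac{k}{m}(1-t^2)$, so that $\rho_m = |t|$. Applying the $m=1$ bound to $g^{(k-1)}\in\mathcal{P}_q$ on $I_{k-1}$, rescaled to $[-1,1]$, at points $s\in I_k$ gives
\[
\|g^{(k)}\|_{I_k} \le \frac{q}{\sqrt{\rho_{k-1}^2 - \rho_k^2}}\,\|g^{(k-1)}\|_{I_{k-1}} = \frac{q\sqrt{m}}{\sqrt{1-t^2}}\,\|g^{(k-1)}\|_{I_{k-1}}.
\]
Iterating yields the constant $\left(q\sqrt m/\sqrt{1-t^2}\right)^m$, which has an unwanted $m^{m/2}$ factor.

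This is the main obstacle: removing the $m$-dependence to get exactly $(2q)^m$. I would first try a better-adapted scheme that uses Markov's inequality $\|p'\|_{[-1,1]}\le q^2\|p\|_{[-1,1]}$ on a small interval around $t$. Take the interval $J = [t-\epsilon, t+\epsilon]\subset[-1,1]$ with $\epsilon \asymp \sqrt{1-t^2}/q$. Then Markov applied $m$ times on $J$, rescaled, gives $\|g^{(m)}\|_J \le (q^2/\epsilon)^m \|g\|_J$, which is too weak. Instead I would bound $\sup_J |g|$ and use the pointwise Bernstein bound at the center of each step, combined with the observation that $\sqrt{1-s^2}\ge \tfrac12\sqrt{1-t^2}$ for $|s-t|\le \tfrac12(1-|t|)$. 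Concretely, the argument runs as follows. For $s$ near $t$, apply the $m=1$ bound to $g^{(k-1)}$ restricted to $[-1,1]$. Bound $\|g^{(k-1)}\|$ on a sub-interval $[-\rho,\rho]$ containing $t$ by using Bernstein on $[-\rho',\rho']$ with $\rho'$ chosen so that $\sqrt{\rho'^2-s^2}\ge \tfrac12\sqrt{1-s^2}$. This is the source of the factor $2$. If equal-step telescoping still loses too much, I would fall back on the representation $g^{(m)}(\cos\theta)$ through derivatives of $T$: write $\frac{d}{dt} = -\frac{1}{\sin\theta}\frac{d}{d\theta}$ and bound $\|T^{(j)}\|\le q^j\|T\|$. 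Then I would control the combinatorial terms from differentiating $1/\sin\theta$ by comparing against the extremal case $(1-t)^{-1/2}$-type growth, which is what produces the constant $2$ per derivative.
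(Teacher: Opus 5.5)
Your reduction to $\delta=1$ and your proof of the case $m=1$ (via $T(\theta)=g(\cos\theta)$ and the trigonometric Bernstein inequality) are correct. The case $m\ge 2$, which is the actual content of the lemma, is not proved. (The paper gives no proof either; it imports the result as Lemma 2.1 of \cite{VH_newapproach2}.) None of your three routes closes the gap.

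Your first scheme cannot be fixed by choosing the radii better. Any chain $1=\rho_0>\rho_1>\dots>\rho_m=|t|$ produces the factor $\prod_k q/\sqrt{\rho_{k-1}^2-\rho_k^2}$. By AM--GM, $\prod_k(\rho_{k-1}^2-\rho_k^2)\le\bigl((1-t^2)/m\bigr)^m$, so the $m^{m/2}$ loss is forced for every choice. The degree drop $q\mapsto q-k+1$ does not rescue this for large $m$.

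Your second variant (choose $\rho'$ with $\sqrt{\rho'^2-s^2}\ge\tfrac12\sqrt{1-s^2}$) is again such a chain. It forces $1-\rho_k^2$ to grow geometrically, so it loses even more.

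The chain-rule fallback breaks down near $\pm1$. Already for $m=2$ one has $g''(\cos\theta)=\frac{T''(\theta)}{\sin^2\theta}-\frac{\cos\theta\,T'(\theta)}{\sin^3\theta}$. The resulting bound $\frac{q^2}{\sin^2\theta}+\frac{q|\cos\theta|}{\sin^3\theta}$ exceeds $\frac{4q^2}{\sin^2\theta}$ as soon as $\sin\theta<|\cos\theta|/(3q)$. For larger $m$ the lower-order coefficients grow factorially in $m$, so this route can only work where $q\sqrt{1-t^2}\gtrsim m$. You give no estimate for the remaining region.

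What is missing is a non-iterative bound on $g^{(m)}$, split into a bulk regime and an edge regime.

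\emph{Bulk.} Combine Cauchy's estimate with the Bernstein--Walsh inequality: $|g(z)|\le\|g\|_{[-1,1]}e^{q\eta}$ on and inside the ellipse $E_\eta=\{\cos(\phi+i\eta):\phi\in\R\}$. With $c=\cos\phi$, a direct computation gives $|\cos(\phi+i\eta)-t|^2=c^2-2ct\cosh\eta+t^2+\sinh^2\eta$. Minimizing over $c$ shows $\mathrm{dist}(t,E_\eta)=\sqrt{1-t^2}\,\sinh\eta$ whenever $|t|\cosh\eta\le1$. Taking $\eta=m/q$ and using $\sinh\eta\ge\eta$, Cauchy's estimate on that disk yields $|g^{(m)}(t)|\le\frac{m!\,e^m}{m^m}\bigl(\frac{q}{\sqrt{1-t^2}}\bigr)^m\|g\|_{[-1,1]}$. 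Since $m!e^m/m^m\le 2^m$ for $m\ge2$, this gives the claim.

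\emph{Edge.} If instead $|t|\cosh(m/q)>1$, then $\sqrt{1-t^2}<\tanh(m/q)\le m/q$. Here V.~Markov's inequality $\|g^{(m)}\|_{[-1,1]}\le\frac{q^{2m}}{(2m-1)!!}\|g\|_{[-1,1]}$ already suffices, because $m^m\le 2^m(2m-1)!!=(m+1)(m+2)\cdots(2m)$.

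The first-derivative (A.~Markov) inequality you mention only covers $\sqrt{1-t^2}\le 2/q$, which leaves $2/q<\sqrt{1-t^2}<m/q$ uncovered. In the argument above, it is the $(2m-1)!!$ gain in V.~Markov's inequality that makes the two regimes meet.
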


\begin{lemma}[Interpolation from $\tfrac{1}{N}$ samples, Proposition 3.1 in \cite{VH_newapproach2}]\label{lemma:Interpolation}
We have
\begin{align*}
    \|h\|_{[0, \delta]} \le C
	\sup_{\frac{1}{N}\le 2\delta} |h(\tfrac{1}{N})|
\end{align*}    
for every $q\in\mathbb{N}$, $h\in \mathcal{P}_q$, and $0 \le \delta \le 
\frac{1}{24q}$, where $C$ is a universal constant.
\end{lemma}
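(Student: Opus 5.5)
The statement to prove is Lemma \ref{lemma:Interpolation}: for $h\in\mathcal{P}_q$ and $0\le\delta\le\frac{1}{24q}$, $\|h\|_{[0,\delta]}\le C\sup_{1/N\le 2\delta}|h(1/N)|$. The plan is to bound $h$ on $[0,\delta]$ by its values on the grid $\{1/N : N\ge 1/(2\delta)\}$, using that this grid is dense enough near $0$ compared with the oscillation scale of a degree-$q$ polynomial. Set $M:=\|h\|_{[0,2\delta]}$ and $S:=\sup_{1/N\le 2\delta}|h(1/N)|$. The aim is the self-improving bound
\[
M\le S+\tfrac12 M,
\]
which gives $M\le 2S$. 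The claim then follows since $\|h\|_{[0,\delta]}\le M$.

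First, for $x\in[0,2\delta]$ pick the sample point $1/N$ nearest to $x$. Write $N_0=\lceil 1/(2\delta)\rceil$. The gaps between consecutive points $1/N,1/(N+1)$ are $\frac{1}{N(N+1)}\le 1/N^2$. Points $x$ below every sample are within distance $1/N$ of $0$, and we treat $0$ as a limit point of the samples. Hence for $x$ near $1/N$ we get $|x-1/N|\le 1/N^2\lesssim x^2$. The sample set is dense on a quadratic scale, which is where the Chebyshev-type improvement near the endpoint will be used. Then
\[
|h(x)-h(1/N)|\le |x-1/N|\,\sup|h'|,
\]
with the supremum over the segment between them.

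Second, bound $h'$ with Lemma \ref{lemma:BernsteinIneq}, applied after recentering: write $[0,2\delta]$ as $[c-\delta,c+\delta]$ with $c=\delta$. At distance $t$ from the endpoint $0$ the factor $\sqrt{1-(y/\delta)^2}$ is about $\sqrt{t/\delta}$, so
\[
|h'(t)|\lesssim \frac{2q}{\sqrt{\delta t}}\,M .
\]
The step size near $t$ is at most $t^2$, so the error is at most $t^2\cdot 2q M/\sqrt{\delta t}=2qM\,t^{3/2}/\sqrt{\delta}\le 2qM\,\delta\cdot O(1)$. Since $\delta\le 1/(24q)$, this is at most $M/2$ after fixing constants. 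Near $t\approx 2\delta$ the Bernstein factor also blows up at the right endpoint. To avoid this I will work on the enlarged interval $[0,4\delta]$, where the sample points still exist up to $1/N_0\le 2\delta$, or cover the right endpoint directly by a sample.

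The main obstacle is the region very close to $0$, where $t^{3/2}/\sqrt{\delta}$ is tiny but the Bernstein factor degenerates. There I will use the uniform bound $|h'|\le (2q)^2 M/\delta$ (the Markov-type endpoint bound) together with a gap of size $\le t^2$. The constant $1/24$ comes from balancing these two regimes. I expect that fixing the right enlarged interval and the constants is where the care goes, but both contributions should come out as $O(q\delta)M$ uniformly.
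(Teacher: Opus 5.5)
The paper does not prove this lemma; it quotes it from \cite[Proposition 3.1]{VH_newapproach2}, so there is no in-paper argument to compare with. On its own terms, your proposal has a genuine gap, and it sits where you expected only constants to need care. The inequality you aim for, $M\le S+\tfrac12 M$ with $M=\|h\|_{[0,2\delta]}$, is false in general.

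Choose $2\delta\in(\frac{1}{12q+1},\frac{1}{12q})$. Then the sample set is $\{1/N : N\ge 12q+1\}\subset[0,L]$ with $L=\frac{1}{12q+1}$. Let $h(x)=T_q(2x/L-1)$. Then $S\le 1$, but as $2\delta\uparrow\frac{1}{12q}$ one gets $|h(2\delta)|\to T_q(1+\frac{1}{6q})\approx\cosh\sqrt{q/3}$. So $M/S$ is unbounded in $q$, while $\|h\|_{[0,\delta]}\le 1$, consistent with the lemma. This is why the conclusion is only on $[0,\delta]$ even though the samples reach $2\delta$.

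Neither of your repairs works. There are no samples in $(2\delta,4\delta]$, so $\|h\|_{[0,4\delta]}$ is even less controlled, and $\|h\|_{[0,2\delta]}\le S+\tfrac12\|h\|_{[0,4\delta]}$ does not close. A sample at the right endpoint does not help either, because the Bernstein factor degenerates on a whole neighbourhood of the right end. There the best available bound is Markov's $|h'|\lesssim q^2M/\delta$, which against gaps of size about $4\delta^2$ gives an error of order $q^2\delta M$, not $O(q\delta)M$.

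Conversely, the region near $0$ that you call the main obstacle is harmless: the $t^2$ gaps beat the $t^{-1/2}$ blow-up, as your own $t^{3/2}$ computation shows. As written, your argument closes only for $\delta\lesssim q^{-2}$, which is the classical Markov-type interpolation bound. The point of the cited proposition, and what the paper needs in Proposition~\ref{prop:ExpansionPolyGUE} with $\delta$ of order $1/\sum_i\tilde q_iq_i$, is the range $\delta\asymp q^{-1}$.

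What is missing is a device that uses the samples in $[\delta,2\delta]$ only as a buffer, so that $h$ never has to be controlled up to the last sample. One way that works, at the price of a much smaller absolute constant than $\frac1{24}$, is to damp $h$.

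By the Chebyshev growth bound, $|h(x)|\le T_q(2x/\delta-1)\|h\|_{[0,\delta]}\le T_q(7)\|h\|_{[0,\delta]}$ for $x\in[\delta,4\delta]$. Take $g\in\mathcal{P}_k$ with $k=O(q)$ such that $0\le g\le 1$ on $[0,4\delta]$, $g\ge\frac12$ on $[0,\delta]$, and $g\le (4T_q(7))^{-1}$ on $[\frac32\delta,4\delta]$. For instance, $g(x)=1-P_n\big(\frac{4x/\delta-5}{11}\big)$ with $P_n(t)=\int_{-1}^t(1-s^2)^n\,ds\big/\int_{-1}^1(1-s^2)^n\,ds$ and $n\asymp q$.

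For $H=hg\in\mathcal{P}_{q+k}$ one gets $|H|\le\frac14\|h\|_{[0,\delta]}\le\frac12\|H\|_{[0,\delta]}$ on $[\frac32\delta,4\delta]$. Hence $\|H\|_{[0,4\delta]}$ is attained on $[0,\frac32\delta]$. This region is covered by samples, since the largest one exceeds $\frac{2\delta}{1+2\delta}>\frac32\delta$. On it, the Bernstein factor for $[0,4\delta]$ has no right-endpoint degeneration.

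Your nearest-sample estimate then gives $\|H\|_{[0,4\delta]}\le S+C(q+k)\delta\|H\|_{[0,4\delta]}$, using $|H(1/N)|\le |h(1/N)|$. It follows that $\|h\|_{[0,\delta]}\le 2\|H\|_{[0,\delta]}\le 4S$ whenever $\delta\le c/q$.
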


In contrast to the above lemma which focuses on polynomials, the following lemma allows us to control rational functions, which will be useful when one has to resort to Weingarten calculus. 

\begin{lemma}[Rational Bernstein inequality, Lemma 7.5 in \cite{VH_newapproach2}]\label{lemma:RationalBernstein}
Let $p, q \in \mathbb{N}$ with $p \geq q$, let $f \in \mathcal{P}_{p}$, and define the rational function $r := \frac{f}{g_q}$ where $g_q$ is as defined in \eqref{eq:Def_g}. Then
\begin{align}
\frac{1}{m!} \| r^{(m)} \|_{[-\frac{1}{cp},\frac{1}{cp}]} \leq
\left(
\e^{-p}(Cp)^m + \frac{(Cp)^{2m}}{m!}
\right)
\| r \|_{I_q},
\end{align}
for all $ m \geq 1 $, where $c, C$ are universal constants and $I_q := \left\{ \frac{1}{N} : N \in \mathbb{Z}, |N| > q \right\} $.
\end{lemma}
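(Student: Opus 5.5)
Throughout I take $g_q$ to be the Weingarten-type denominator from \eqref{eq:Def_g}, assumed to be of the form $g_q(x)=\prod_{k=1}^{q}(1-k^2x^2)$ (up to harmless modifications). The two properties I use are that $0<g_q(1/N)\le 1$ for $|N|>q$, and that $g_q$ has no zeros in $|z|<1/q$. The plan has four steps: first control the numerator $f$ on a real interval from its values at $\frac1N$-samples, then extend this control to a complex disc, then divide by $g_q$, and finally obtain derivative bounds by a Cauchy estimate combined with a Taylor truncation. The two terms on the right-hand side of the statement should correspond to the two pieces of that truncation.

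\emph{Step 1 (real bound on $f$).} Since $f=r\,g_q$ and $|g_q(1/N)|\le 1$ for $|N|>q$, we get $|f(1/N)|\le \|r\|_{I_q}$ for all such $N$. Apply Lemma~\ref{lemma:Interpolation} to $x\mapsto f(x)$ and to $x\mapsto f(-x)$, both of degree at most $p$, with $\delta=\frac1{24p}$. The samples used there satisfy $\frac1N\le 2\delta$, i.e.\ $N\ge 12p\ge 12q>q$, so they all lie in $I_q$. This gives $\|f\|_{[-\delta,\delta]}\le C\|r\|_{I_q}$.

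\emph{Step 2 (complex extension).} By the Bernstein--Walsh inequality on the Bernstein ellipse of $[-\delta,\delta]$, for $|z|\le a\delta$ we have $|f(z)|\le (a+\sqrt{1+a^2})^p\,\|f\|_{[-\delta,\delta]}\le e^{ap}C\|r\|_{I_q}$.

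\emph{Step 3 (lower bound on $g_q$).} On the disc $|z|\le\rho$ with $\rho=a\delta\sim a/p$ and $p\ge q$, we have $\log|g_q(z)|\ge -2\rho^2\sum_{k\le q}k^2\gtrsim -a^2q^3/p^2\ge -Ca^2 p$. Hence $|r(z)|\le e^{C(a+a^2)p}\|r\|_{I_q}$ on $|z|\le\rho$. The same argument on the real segment alone gives $\|r\|_{[-\delta,\delta]}\le C\|r\|_{I_q}$.

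\emph{Step 4 (derivative bounds).} Expand $r(x)=\sum_k b_kx^k$ around $0$. The Cauchy estimate on $|z|=\rho$ gives $|b_k|\le e^{C_ap}\rho^{-k}$, measured in units of $\|r\|_{I_q}$. Split $r=T_n+R_n$, where $T_n$ is the Taylor polynomial of degree $n=Kp$.

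For $|x|\le \frac1{cp}$ with $c$ much larger than $1/a$, the $m$-th derivative of the tail satisfies $\frac1{m!}|R_n^{(m)}(x)|\le \sum_{k>n}\binom km|b_k||x|^{k-m}$. The ratio $|x|/\rho$ is a small constant, and this geometric decay beats both $e^{C_ap}$ and the binomial factors once $K$ is large. This should produce the term $e^{-p}(Cp)^m$.

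For the polynomial part, $T_n$ has degree $O(p)$ and is bounded on the real interval, since $\|T_n\|\le\|r\|+\|R_n\|\le C\|r\|_{I_q}$. I would then apply Lemma~\ref{lemma:BernsteinIneq} on $[-\delta,\delta]$, evaluated at points with $|x|\le\delta/2$.

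\emph{Main obstacle.} Lemma~\ref{lemma:BernsteinIneq} only yields $(Cp)^m$ per derivative, not the factorial-weighted $(Cp)^{2m}/m!$ of the statement. I would first check whether the crude bound already implies the claim: since $(Cp)^m\le (Cp)^{2m}/m!$ whenever $m\le Cp$, the issue is only the regime $m\gg p$. For $T_n$ this regime is vacuous when $m>n$, because the derivative vanishes. For $m\gg p$ but $m\le n$, I expect to need to choose $n$ more carefully, for instance $n\sim p$ with a constant tied to $c$, so that the tail absorbs those orders.

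The delicate point is therefore the bookkeeping in Step~4: choosing $a$, $c$ and $K$ so that the tail bound has the exact shape $e^{-p}(Cp)^m$ uniformly in $m\ge1$. It also requires verifying that the $q^3/p^2$ loss in Step 3 stays of order $p$, which is where the hypothesis $p\ge q$ enters.
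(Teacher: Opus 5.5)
The paper quotes this lemma from \cite{VH_newapproach2} without proof, so I assess your argument on its own. Steps 1--2 and the disc estimate in Step 3 are sound. With the paper's $g_q$, the multiplicities $\lfloor q/j\rfloor$ only change constants, since $\sum_{j\le q}\lfloor q/j\rfloor j^2\le q^3$.

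\textbf{The gap is in controlling the polynomial part $T_n$.} You bound $\|T_n\|_{[-\delta,\delta]}\le\|r\|_{[-\delta,\delta]}+\|R_n\|_{[-\delta,\delta]}$ and claim $\|r\|_{[-\delta,\delta]}\le C\|r\|_{I_q}$ ``by the same argument on the real segment''. That argument is $\|r\|\le\|f\|/\min g_q$. The $q^3/p^2$ loss you found on the disc is already present on the real line: $\log g_q(\delta)\le-\delta^2\sum_{j\le q}\lfloor q/j\rfloor j^2\le -c\,q^3/p^2$. For $q=p$ this is $\le -c'p$, and shrinking $\delta$ by a constant factor only changes $c'$. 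So your chain yields $\|T_n\|\le \e^{c'p}\|r\|_{I_q}$, and then $\e^{c'p}(Cp)^{2m}/m!$. That is off by an exponential factor, so it does not prove the lemma even for $m=1$.

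\textbf{The repair is to control $T_n$ only at the sample points.}
\begin{itemize}
\item Take $\rho=\frac1{2p}$, so $\rho\le\frac1{2q}$. Your Steps 1--3 give $|r|\le \e^{C_0p}\|r\|_{I_q}$ on $|z|\le\rho$. Hence $|R_n|\le 2\e^{(C_0-K)p}\|r\|_{I_q}\le \e^{-p}\|r\|_{I_q}$ on $|z|\le\rho/\e$ once $K\ge C_0+2$.
\item Every point $\tfrac1N$ with $|N|\ge 12Kp$ lies in this disc and in $I_q$. So $|T_n(\tfrac1N)|\le 2\|r\|_{I_q}$ at those points.
\item Apply Lemma~\ref{lemma:Interpolation} to $T_n\in\mathcal P_{Kp}$ and to $T_n(-\,\cdot)$ with $\delta_T=\frac1{24Kp}$. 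This gives $\|T_n\|_{[-\delta_T,\delta_T]}\le C\|r\|_{I_q}$.
\item Lemma~\ref{lemma:BernsteinIneq} at $|x|\le\delta_T/2$ then gives $\frac1{m!}|T_n^{(m)}(x)|\le\frac{(Cp)^{2m}}{m!}\|r\|_{I_q}$.
\item For the tail, apply the Cauchy estimate to $R_n$ itself on the disc of radius $\frac{\rho}{2\e}$ centred at $x$. This gives $\e^{-p}(Cp)^m\|r\|_{I_q}$ uniformly in $m$. Your coefficientwise series does not decay when $m\ge n$: there the $k=m$ term alone has size $\e^{C_0p}\rho^{-m}$.
\end{itemize}
Taking $c=48K$ proves the lemma.

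\textbf{Your ``main obstacle'' is a miscalculation.} Lemma~\ref{lemma:BernsteinIneq} for a polynomial of degree $\asymp p$ on an interval of half-length $\asymp 1/p$ costs a factor $\asymp p^2$ per derivative. It therefore produces $(Cp)^{2m}/m!$ directly for every $m$. There is no problematic regime $m\gg p$, and no need to retune $n$.
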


Many results of this paper rely on the continuity of $k$-linear form and their approximation on dense subspaces (typically, vector spaces of polynomials). The following elementary lemma -- classical for linear maps -- clarifies techniques that are repeatedly used with various Banach spaces. 

\begin{lemma}[Extension of Bounded Multi-linear Maps]\label{lemma:extensionBoundedMultilinear}
Let $H$ be a Banach space with norm $\|\cdot\|$. Let $V$ be a dense vector subspace of $H$. Let $\nu: V^k \to \mathbb{C}$ be a $k$-linear map such that for any $x_1, \ldots, x_k \in V$,
\[
|\nu(x_1, \ldots, x_k)| \le \prod_{i=1}^k \|x_i\|.
\]
Then, $\nu$ extends uniquely to a continuous $k$-linear map $\tilde{\nu}: H^k \to \mathbb{C}$ satisfying the same bound:
\[
|\tilde{\nu}(x_1, \ldots, x_k)| \le \prod_{i=1}^k \|x_i\| \quad \text{for all } x_1, \ldots, x_k \in H.
\]
In particular, if $\nu: V^k \to \mathbb{C}$ is a $k$-linear map such that for any $x_1, \ldots, x_k \in V$,
$\nu(x_1, \ldots, x_k)=0$, $\nu=0$ is its only bounded $k$-linear extension to $H^k$
\end{lemma}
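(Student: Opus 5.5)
The plan is to extend $\nu$ one variable at a time through Cauchy sequences, using the bound to show that the limit does not depend on the sequences chosen. Fix $x_1,\dots,x_k\in H$. By density of $V$, pick sequences $x_i^{(n)}\in V$ with $x_i^{(n)}\to x_i$ in $H$. The basic tool is the telescoping identity coming from multilinearity:
\[
\nu(x_1^{(n)},\dots,x_k^{(n)})-\nu(x_1^{(m)},\dots,x_k^{(m)})=\sum_{i=1}^k \nu\bigl(x_1^{(m)},\dots,x_{i-1}^{(m)},x_i^{(n)}-x_i^{(m)},x_{i+1}^{(n)},\dots,x_k^{(n)}\bigr).
\]
Each summand is at most $\|x_i^{(n)}-x_i^{(m)}\|\prod_{j\neq i}\sup_\ell\|x_j^{(\ell)}\|$ by the hypothesis. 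Convergent sequences are bounded, so $\bigl(\nu(x_1^{(n)},\dots,x_k^{(n)})\bigr)_n$ is Cauchy in $\C$ and has a limit.

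Next I check that the limit does not depend on the approximating sequences. Given two families $x_i^{(n)}$ and $y_i^{(n)}$ with the same limits, the same telescoping identity bounds the difference of the two values by a sum of terms containing a factor $\|x_i^{(n)}-y_i^{(n)}\|\to 0$. So I may define $\tilde\nu(x_1,\dots,x_k)$ as this common limit.

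The remaining properties of $\tilde\nu$ all follow by passing to the limit:
\begin{itemize}
\item \emph{Multilinearity}: it holds for each $n$, and both sums and scalar multiples of approximating sequences again approximate the corresponding vectors.
\item \emph{Agreement with $\nu$ on $V^k$}: use constant sequences.
\item \emph{The bound}: it passes to the limit because $\|x_i^{(n)}\|\to\|x_i\|$.
\item \emph{Continuity}: the bound together with the telescoping identity gives $|\tilde\nu(x)-\tilde\nu(y)|\le\sum_i\|x_i-y_i\|\prod_{j\ne i}\max(\|x_j\|,\|y_j\|)$, so $\tilde\nu$ is locally Lipschitz.
\end{itemize}
Uniqueness holds because any two continuous $k$-linear extensions agree on the dense set $V^k$, which is dense in $H^k$, and hence agree everywhere.

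For the final assertion, suppose $\nu\equiv 0$ on $V^k$. Then $\nu$ trivially satisfies the bound, and the constructed extension is the zero map. Any bounded (equivalently, continuous) $k$-linear extension coincides with $0$ on the dense set $V^k$, so it is $0$ by the uniqueness argument.

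I expect no real obstacle here. The only point needing care is that $k$-linear maps are not uniformly continuous on all of $H^k$, so the Cauchy estimate must use the boundedness of the convergent sequences, as done above, rather than a global Lipschitz constant.
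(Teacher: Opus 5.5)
Your proposal is correct and follows essentially the same route as the paper: the telescoping identity plus boundedness of convergent sequences makes $\nu(x_1^{(n)},\dots,x_k^{(n)})$ Cauchy, the bound and multilinearity pass to the limit, and uniqueness comes from density of $V^k$ in $H^k$. You also spell out independence of the approximating sequences, agreement with $\nu$ on $V^k$, and the local Lipschitz estimate giving continuity, which the paper leaves to ``standard arguments''.
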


\begin{proof}
Since $V$ is dense in $H$, the product space $V^k$ is dense in $H^k$ with respect to the product topology. Because $H^k$ is a metric space and $\mathbb{C}$ is Hausdorff, any two continuous functions that agree on a dense subset must be identical on the whole space. Thus, if a continuous extension exists, it is unique.

Let $x_1, \ldots, x_k$ be arbitrary vectors in $H$. Since $V$ is dense in $H$, for each index $i \in \{1, \ldots, k\}$, there exists a sequence $(x_i^{(n)})_n \subset V$ such that $\lim_{n \to \infty} x_i^{(n)} = x_i$ in norm.

We define the sequence of scalars $S_n = \nu(x_1^{(n)}, \ldots, x_k^{(n)})$. We claim that $(S_n)$ is a Cauchy sequence in $\mathbb{C}$. Consider the difference:
\[
|\nu(x_1^{(n)}, \ldots, x_k^{(n)}) - \nu(x_1^{(m)}, \ldots, x_k^{(m)})|
\]
Using a telescoping sum argument, we can express the difference as:
\[
\nu(x_1^{(n)}, \ldots) - \nu(x_1^{(m)}, \ldots) = \sum_{j=1}^k \nu\left(x_1^{(m)}, \ldots, x_{j-1}^{(m)}, x_j^{(n)} - x_j^{(m)}, x_{j+1}^{(n)}, \ldots, x_k^{(n)}\right).
\]
Applying the boundedness of $\nu$ on $V$:
\[
|S_n - S_m| \le \sum_{j=1}^k \left( \|x_1^{(m)}\| \cdots \|x_{j-1}^{(m)}\| \cdot \|x_j^{(n)} - x_j^{(m)}\| \cdot \|x_{j+1}^{(n)}\| \cdots \|x_k^{(n)}\| \right).
\]
Since convergent sequences are bounded, there exists $M > 0$ such that $\|x_i^{(n)}\| \le M$ for all $i$ and $n$. Thus:
\[
|S_n - S_m| \le M^{k-1} \sum_{j=1}^k \|x_j^{(n)} - x_j^{(m)}\|.
\]
Since each sequence $(x_i^{(n)})_n$ converges, it is Cauchy in $H$, so $\|x_j^{(n)} - x_j^{(m)}\| \to 0$ as $n, m \to \infty$. Therefore, $(S_n)$ is a Cauchy sequence in $\mathbb{C}$. Since $\mathbb{C}$ is complete, the limit exists. We define $\tilde{\nu}$ by:
\[
\tilde{\nu}(x_1, \ldots, x_k) = \lim_{n \to \infty} \nu(x_1^{(n)}, \ldots, x_k^{(n)}).
\]

Standard arguments show that this limit is independent of the choice of approximating sequences. The $k$-linearity of $\tilde{\nu}$ follows from the $k$-linearity of $\nu$ and the linearity of the limit operation.

Since the norm is a continuous function:
\[
|\tilde{\nu}(x_1, \ldots, x_k)| = \lim_{n \to \infty} |\nu(x_1^{(n)}, \ldots, x_k^{(n)})| \le \lim_{n \to \infty} \prod_{i=1}^k \|x_i^{(n)}\| = \prod_{i=1}^k \|x_i\|.
\]
This confirms that the extension satisfies the same norm bound.
\end{proof}

\section{Asymptotic expansion for multi-trace Gaussian ensembles}
\subsection{The multi-trace GUE case}\label{sec:GUE}

This section is devoted to establishing 
an asymptotic expansion of smooth multi-trace statistics of polynomials of GUE matrices. The results of this expansion will be applied in section \ref{sec:Application} to prove several application results in the GUE case.

Throughout this section, we fix the following setting.
Let $\bm{G}^N = (G_1^N, \ldots, G_d^N)$ be independent GUE matrices of dimension $N$, and let $\bm{s} = (s_1, \ldots, s_d)$ be a free semicircular family. 
Let $r\in \N$ be fixed. 
We also fix self-adjoint non-commutative polynomials $P_i \in 
\mathbb{C}\langle x_1, \ldots, x_d\rangle$ of degree $\tilde{q}_i$ for each $i=1,...,r$. 
We will use the following notations: 
\[
X_i^N := P_i(\bm{G}^N), \qquad X_i^F := P_i(\bm{s}) \qquad i=1,...,r.
\]

We start with the following estimate, which will be used to truncate the Gaussian distribution. This step is a direct modification of Lemma 4.3 ("A priori bounds")
in \cite{VH_newapproach2} to fit our multilinear setting. 

\begin{lemma}
\label{lemma:APrioriBound}
    There exist universal constants $C,c>0$ 
    such that for any $i=1,..., r$,
    \begin{align}\label{ineq:prob_tail}
    \P \left[\|X^N_i\|>K_i\right]\leq C d \e^{-cN},
    \end{align}
    where $K_i \coloneq (Cd)^{\tilde{q}_i}\|X^F_i\|$. 
    \begin{align}\label{ineq:multitrace-bound}
    \left|\E \left[\tr h_1 (X_1^N) \cdots \tr h_r(X_r^N)\right]\right| \leq 2 \|h_1\|_{[-K_1, K_1]} \cdots \|h_r\|_{[-K_r, K_r]}
    \end{align}
    and
    \begin{align}\label{ineq:tr_tail}
    &\left|\E \left[\tr h_1 (X_1^N) \cdots \tr h_r(X_r^N)\cdot 1_{\{\|X_1^N\|>K_1 \mbox{ or }\cdots \mbox{ or }\|X_r^N\|>K_r\}}\right]\right| \notag \\
    & \quad \leq C \sqrt{dr} \cdot \e^{-cN} \|h_1\|_{[-K_1, K_1]} \cdots \|h_r\|_{[-K_r, K_r]},
    \end{align}
    for any $h_1\in \mathcal{P}_{q_1}, \dots , h_r \in \mathcal{P}_{q_r}$, with $q_1,...,q_r$ which satisfy $\tilde{q}_1 q_1 + \cdots + \tilde{q}_r q_r \leq N$.
\end{lemma}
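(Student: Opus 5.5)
We prove the three inequalities in order; each step feeds the next.

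\textbf{Step 1: the tail bound \eqref{ineq:prob_tail}.} Each $G_j^N$ satisfies the standard GUE norm concentration $\P[\|G_j^N\| > C'] \le C\e^{-cN}$ for a universal $C'$ (e.g.\ $C'=3$). A union bound over $j=1,\dots,d$ gives $\P[\max_j\|G_j^N\|>C']\le Cd\e^{-cN}$. On the complementary event we bound $\|P_i(\bm G^N)\|$ by the triangle inequality over the monomials of $P_i$, whose degree is at most $\tilde q_i$. We must compare this with $\|X_i^F\|$, and for that we follow Lemma 4.3 of \cite{VH_newapproach2} rather than bounding coefficient sums. The free semicircular family satisfies a Haagerup-type inequality, so coefficients of a degree-$\tilde q_i$ polynomial are controlled by $\|P_i(\bm s)\|$ up to factors $(Cd)^{\tilde q_i}$. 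This yields $\|X_i^N\|\le (Cd)^{\tilde q_i}\|X_i^F\| = K_i$ on the good event, after enlarging $C$. This comparison is the only nontrivial input, and we take it directly from the cited lemma.

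\textbf{Step 2: the truncated bound \eqref{ineq:tr_tail}.} Let $E=\{\|X_1^N\|>K_1\}\cup\dots\cup\{\|X_r^N\|>K_r\}$. We apply Cauchy--Schwarz:
\[
\bigl|\E[\textstyle\prod_i \tr h_i(X_i^N)\,1_E]\bigr|\le \E\bigl[\textstyle\prod_i|\tr h_i(X_i^N)|^2\bigr]^{1/2}\P[E]^{1/2}.
\]
By Step 1 and a union bound, $\P[E]\le Crd\e^{-cN}$, which produces the factor $\sqrt{dr}\,\e^{-cN/2}$; the constant $c$ is then renamed.

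The main obstacle is controlling the first factor by $\prod_i\|h_i\|_{[-K_i,K_i]}$ when the polynomials $h_i$ may be huge outside $[-K_i,K_i]$. The plan is as follows.
\begin{itemize}
\item Bound $|\tr h_i(X)|\le \sup_{|x|\le\|X\|}|h_i(x)|$.
\item Use the classical Chebyshev growth estimate for $h\in\mathcal P_q$: $|h(x)|\le |T_q(x/K)|\,\|h\|_{[-K,K]}\le (2|x|/K+2)^q\|h\|_{[-K,K]}$ for $|x|\ge K$.
\item This gives $\prod_i|\tr h_i(X_i^N)|\le \prod_i(2+2\|X_i^N\|/K_i)^{q_i}\prod_i\|h_i\|_{[-K_i,K_i]}$.
\item Since $\|X_i^N\|\le C_{P_i}(1+\max_j\|G_j^N\|)^{\tilde q_i}$ and $K_i\gtrsim$ the relevant scale, the second moment of the product is at most $\E[(C(1+\max_j\|G_j^N\|))^{2\sum_i\tilde q_iq_i}]$.
\item Because $\sum_i\tilde q_iq_i\le N$, Gaussian concentration of $\max_j\|G_j^N\|$ bounds this by $C^{N}$.
\end{itemize}
This $C^N$ growth is not absorbed automatically. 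It must be absorbed into $\e^{-cN}$ by choosing the constant in $K_i=(Cd)^{\tilde q_i}\|X_i^F\|$ large enough, exactly as in \cite{VH_newapproach2}. Concretely, we run Step 1 with a larger threshold, using the tail $\P[\max_j\|G_j^N\|>t]\le d\e^{-cNt^2}$ for $t$ large. We then split by dyadic shells of $\max_j\|G_j^N\|$ rather than using a single Cauchy--Schwarz, so that the Gaussian decay in each shell beats the $(Ct)^{2N}$ polynomial growth.

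\textbf{Step 3: the bound \eqref{ineq:multitrace-bound}.} On $E^c$ every $\|X_i^N\|\le K_i$, hence $|\tr h_i(X_i^N)|\le\|h_i\|_{[-K_i,K_i]}$ and that part of the expectation is at most $\prod_i\|h_i\|_{[-K_i,K_i]}$. On $E$ we use \eqref{ineq:tr_tail}. Choosing $C$ large enough that $C\sqrt{dr}\,\e^{-cN}\le 1$ gives the constant $2$. If this fails for small $N$, it is handled by enlarging $K_i$, or we note the statement is only used for $N$ large.
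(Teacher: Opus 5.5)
Your argument is sound in its final form, but it runs in the opposite order from the paper and obtains the key estimate differently.

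\textbf{The paper's route.} It proves \eqref{ineq:multitrace-bound} first. It bounds $|\tr h_i(X_i^N)|\le\sup_{|x|\le\|X_i^N\|}|h_i(x)|$ and decouples the $r$ factors by H\"older with exponents $t_i=2N/(\tilde q_iq_i)$, which is admissible when $\sum_i\tilde q_iq_i\le 2N$. Each factor then goes into the single-matrix moment bound $\E[\sup_{|x|\le\|X_i^N\|}|h_i(x)|^{t_i}]\le 2\|h_i\|^{t_i}_{[-K_i,K_i]}$, taken from the proof of Lemma 4.3 and Lemma 2.2 of \cite{VH_newapproach2}. After that, \eqref{ineq:tr_tail} is a two-line Cauchy--Schwarz. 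The second moment is bounded by \eqref{ineq:multitrace-bound} applied to the squares, with no exponential loss. That is why the hypothesis is $\sum_i\tilde q_iq_i\le N$.

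\textbf{Your route.} You prove \eqref{ineq:tr_tail} directly, using the deterministic comparison behind \eqref{ineq:prob_tail}: $\|X_i^N\|\le\rho^{\tilde q_i}K_i$ with $\rho=\tfrac{C'}{C}(1+M)$ and $M=\max_j\|G_j^N\|$. On the bad event $\rho>1$, so the product of Chebyshev growth factors collapses to at most $(4\rho)^N$. The whole multi-trace tail thus becomes a one-dimensional Gaussian tail integral in $M$.
\begin{itemize}
\item \emph{What this buys:} no H\"older step, no non-integer powers $|h_i|^{t_i}$, and a tail bound with no union-bound factor $r$.
\item \emph{What it costs:} it uses that all $X_i^N$ are polynomials in the same Gaussian tuple, whereas the paper's H\"older step only needs each $X_i^N$ to satisfy the single-matrix estimate. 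You also have to redo the tail integration of \cite{VH_newapproach2} yourself.
\end{itemize}
Your first, single Cauchy--Schwarz attempt would already have worked. The constant in your $C_0^N$ bound on the second moment does not grow with the threshold constant in $K_i$, since $\|X_i^N\|/K_i$ decreases as it grows. The decay rate of $\P[E]$, by contrast, increases with it. So the dyadic shells are optional.

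\textbf{Step 3 needs fixing.} The constant $2$ requires the tail term to be at most $\prod_i\|h_i\|_{[-K_i,K_i]}$ for every $N\ge1$. ``Choosing $C$ large enough that $C\sqrt{dr}\,\e^{-cN}\le1$'' has the lever backwards, because a larger prefactor only hurts. What works is enlarging the threshold constant inside $K_i$. This raises the decay rate while, as your own shell estimate shows, the prefactor stays fixed and independent of $r$. With the paper's convention that constants may depend on the $P_i$, that closes the argument.

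Your fallback, ``the statement is only used for $N$ large,'' is not available. The proof of Proposition \ref{prop:ExpansionPolyGUE} invokes \eqref{ineq:multitrace-bound} at every $N\ge 12(\tilde q_1q_1+\cdots+\tilde q_rq_r)$, and this threshold can be small.

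\textbf{A minor point on Step 1.} The coefficient control you describe is the easy direction, not a Haagerup inequality. It comes from orthonormality of a Chebyshev-type word basis in $L^2(\tau)$ together with $\|\cdot\|_{L^2(\tau)}\le\|\cdot\|$. This is harmless, since you take it from the cited lemma.
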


\begin{remark}
In the statement above, the constant $C, c>0$ may depend on the fixed polynomials $P_i$. However, since $P_i$'s are fixed throughout this discussion, we will suppress such dependence and refer to a universal constant $C$ simply as a universal constant. The same convention will be adopted in the subsequent parts of the paper.
\end{remark}

\begin{proof}
    The first inequality \eqref{ineq:prob_tail} follows directly from Lemma 4.3 in \cite{VH_newapproach2}.    
    The remaining inequalities 
    also follow from the same lemma, along with
    Lemma 2.2 in \cite{VH_newapproach2} 
    through an  
    application of H\"older's inequality.
    We start by bounding the expectation of the product of traces as follows:
    \begin{align*}
    |\E [\tr h_1 (X_1^N) \cdots \tr h_r(X_r^N)]| 
    \leq \E [ \sup_{|x_1|\leq \|X_1^N\|} |h_1(x_1)| \cdots \sup_{|x_r|\leq \|X_r^N\|} |h_r(x_r)|].
    \end{align*}
    If $q_i, \ (i=1,...,r)$ satisfies $\tilde{q}_1 q_1 +\cdots + \tilde{q}_r q_r \leq 2N$, then, $t_i \coloneq \frac{2N}{\tilde{q}_i q_i} >0 \ (i=1,...,r)$ satisfies
    $\sum_{i=1}^r \frac{1}{t_i} \leq 1.$
    So applying H\"older's inequality yields
    \begin{align*}
    &\E [ \sup_{|x_1|\leq \|X_1^N\|} |h_1(x_1)| \cdots \sup_{|x_r|\leq \|X_r^N\|} |h_r(x_r)|]\notag \\
    &\leq \E [ \sup_{|x_1|\leq \|X_1^N\|} |h_1(x_1)|^{t_1}]^{\frac{1}{t_1}} \cdots \E[\sup_{|x_r|\leq \|X_r^N\|} |h_r(x_r)|^{t_r}]^{\frac{1}{t_r}}.
    \end{align*}
    Here, by the proof of Lemma 4.3 in \cite{VH_newapproach2} (noting that $q_i \leq \frac{2N}{\tilde{q_i}t_i}$ and combined with Lemma 2.2 in \cite{VH_newapproach2}), we have 
    for each $i=1,...,r$,
    \begin{align*}
    \E [ \sup_{|x_i|\leq \|X_i^N\|} |h_i(x_i)|^{t_i}]
    \leq 2 \|h_i\|^{t_i}_{[-K_i,K_i]}.
    \end{align*}
    Therefore,
    \begin{align*}
    &|\E [\tr h_1 (X_1^N) \cdots \tr h_r(X_r^N)]| \notag \\
    &\quad \leq
    \prod_{i=1}^r
    \left(
    2 \|h_i\|^{t_i}_{[-K_i,K_i]}
    \right)^{\frac{1}{t_i}}
    \leq 2 \|h_1\|_{[-K_1,K_1]}\cdots \|h_r\|_{[-K_r,K_r]}.
    \end{align*}

    Finally, we assume that $h_i \in \mathcal{P}_{q_i}$ and $ q_i, \ (i=1,...,r)$ satisfies $\tilde{q}_1 q_1 +\cdots + \tilde{q}_r q_r \leq N$, by the Cauchy-Schwarz inequality, \eqref{ineq:prob_tail}, and \eqref{ineq:multitrace-bound},
    noting that $q_i \leq \frac{N}{\tilde{q}_i t_i}$ we have, 
    \begin{align*}
    &|\E [\tr h_1 (X_1^N) \cdots \tr h_r(X_r^N) 1_{\{\|X_1^N\|>K_1 \ \mathrm{or} \cdots \mathrm{or} \ \|X_r^N\|>K_r\}}]|\notag \\
    &\leq \left|\E \left[\left(\tr h_1 (X_1^N) \cdots \tr h_r(X_r^N)\right)^2\right]\right|^{\frac{1}{2}} \P\left[{\|X_1^N\|>K_1 \ \mathrm{or} \cdots \mathrm{or} \ \|X_r^N\|>K_r}\right]^{\frac{1}{2}}\notag \\
    &\leq \left(2\|h_1\|^2_{[-K_1,K_1]}\cdots \|h_r\|^2_{[-K_r,K_r]}\right)^{\frac{1}{2}} \left(r\cdot Cd\e^{-cN}\right)^{\frac{1}{2}},
    \end{align*}
    which concludes the proof. 
\end{proof}

Throughout the rest of this section, we fix $K_i$ as in Lemma \ref{lemma:APrioriBound} for each $i=1,...,r$.
Next, we establish the asymptotic expansion for polynomial test functions $h_i \in \mathcal{P} \ (i=1,...,r)$.

\begin{proposition}[Asymptotic expansion for polynomial test functions]\label{prop:ExpansionPolyGUE}
     For any $m \in \mathbb{Z}_{\geq 0}$, 
     there exists an $r$-linear functional $\nu_m:\mathcal{P} ^r\to \mathbb{R}$ such that for any $q_i \in \N$ and $h_i \in \mathcal{P}_{q_i}\ (i=1,...,r)$,
    \begin{align}\label{ineq:nu_Bound}
    |\nu_m(h_1,\dots,h_r)| \leq \frac{(C(\tilde{q}_1 q_1+\cdots +\tilde{q}_r q_r))^{2m}}{m!} \|h_1\|_{[-K_1,K_1]}\cdots \|h_r\|_{[-K_r,K_r]},
    \end{align}
    and in addition for $q_1,...,q_r$ that satisfy $N \geq C(\tilde{q}_1 q_1+\cdots +\tilde{q}_r q_r)$, we have
    \begin{align}\label{ineq:ExpansionPolyGUE}
    &\left|\E [\tr h_1 (X_1^N)\cdots \tr h_r(X_r^N)]-\sum_{k=0}^{m-1} \frac{\nu_k(h_1, \dots ,h_r)}{N^k}\right| \notag \\
    &\quad \leq \frac{(C(\tilde{q}_1 q_1+\cdots +\tilde{q}_r q_r))^{2m}}{m!N^m} \|h_1\|_{[-K_1,K_1]} \cdots \|h_r\|_{[-K_r,K_r]}.
    \end{align}
    Here $C$ is a universal constant, 
    and 
    $K_i := (Cd)^{\tilde{q}_i}\|X_i^F\|$ for $i=1,...,r$.

    Moreover, we have $\nu_{m}=0$ for any odd $m$.
\end{proposition}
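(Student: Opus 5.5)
The plan follows the Chen--Garza-Vargas--Tropp--van Handel strategy, with the multi-trace product playing the role of the single trace.

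\textbf{Step 1: rational structure in $1/N$.} For fixed polynomials $h_i\in\mathcal{P}_{q_i}$, set $Q:=\tilde q_1q_1+\cdots+\tilde q_rq_r$. Expanding $\tr h_1(X_1^N)\cdots\tr h_r(X_r^N)$ into monomials in the entries of $\bm{G}^N$ gives a product of traces of words of total degree at most $Q$. The Wick formula (genus expansion for multi-trace GUE moments) then shows that
\[
\Phi(1/N):=\E[\tr h_1(X_1^N)\cdots\tr h_r(X_r^N)]
\]
is a polynomial $\Phi(x)$ in $x=1/N$. Indeed, each pairing contributes $N^{\#\text{faces}-\#\text{edges}-r}$; this is a power of $N^{-1}$ and is even, because $\chi$ and the number of connected components have matching parities. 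Its degree is at most $Q$, since there are at most $Q/2$ edges. All exponents are nonnegative, because each connected component of the glued surface has Euler characteristic at most $2$. Hence $\Phi\in\mathcal{P}_{Q}$ and only even powers occur. I then define $\nu_k(h_1,\dots,h_r):=\Phi^{(k)}(0)/k!$. This is clearly $r$-linear, since $\Phi$ is $r$-linear in $(h_1,\dots,h_r)$, it is real, and it vanishes for odd $k$. It is also independent of the degrees $q_i$ chosen, since it is a Taylor coefficient.

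\textbf{Step 2: bound on $1/N$ samples.} By Lemma~\ref{lemma:APrioriBound}, inequality \eqref{ineq:multitrace-bound}, we have $|\Phi(1/N)|\le 2\prod_i\|h_i\|_{[-K_i,K_i]}$ whenever $Q\le N$. Lemma~\ref{lemma:Interpolation} then gives
\[
\|\Phi\|_{[0,\delta]}\le C\prod_i\|h_i\|_{[-K_i,K_i]}\qquad\text{for } \delta=\tfrac{1}{24Q}.
\]
For this I need the samples $1/N\le 2\delta$ to satisfy $N\ge 12Q\ge Q$, which they do. By evenness of $\Phi$, the same bound holds on $[-\delta,\delta]$.

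\textbf{Step 3: derivative and Taylor bounds.} Lemma~\ref{lemma:BernsteinIneq} on $[-\delta,\delta]$, applied at points $|x|\le\delta/2$, gives
\[
|\Phi^{(m)}(x)|\le (CQ/\delta)^m\|\Phi\|_{[-\delta,\delta]}\le (CQ)^{2m}\prod_i\|h_i\|_{[-K_i,K_i]}.
\]
Dividing by $m!$ at $x=0$ gives \eqref{ineq:nu_Bound}. For \eqref{ineq:ExpansionPolyGUE}, Taylor's theorem with Lagrange remainder gives
\[
\Bigl|\Phi(1/N)-\sum_{k<m}\nu_k/N^k\Bigr|\le \frac{\sup_{|x|\le 1/N}|\Phi^{(m)}(x)|}{m!\,N^m},
\]
which is the desired bound provided $1/N\le\delta/2$, i.e.\ $N\ge CQ$.

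\textbf{Main obstacle.} I expect Step 1 to be the hardest: justifying rigorously that the multi-trace moment is a polynomial in $1/N$ of degree at most $Q$ with only even powers. The delicate point is the Euler-characteristic count when the pairing connects several traces, which must rule out negative powers of $1/N$. If the topological argument gets messy, I would fall back on the Gaussian integration-by-parts (Schwinger--Dyson) recursion, proving polynomiality and parity by induction on the total degree. Everything else is a direct transcription of the single-trace argument of \cite{VH_newapproach2}, with \eqref{ineq:multitrace-bound} replacing the single-trace a priori bound.
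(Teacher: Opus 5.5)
Your proposal is correct and follows the paper's proof step by step: polynomiality and evenness of $\Phi$ in $1/N$ from the genus expansion (which the paper simply cites), then the a priori bound \eqref{ineq:multitrace-bound} at the samples $1/N$ with $N\ge Q$, interpolation with $\delta=1/(24Q)$, evenness to pass to $[-\delta,\delta]$, Bernstein on $[-\delta/2,\delta/2]$, and Taylor for $N\ge 48Q$. One small correction to your Step~1: a pairing contributes the power $E+r-F=2r-\chi$ of $1/N$ (even because $\chi=\sum_c(2-2g_c)$), and this is not bounded by the edge count alone (e.g.\ $\E[\tr G\,\tr G]=N^{-2}$ with a single edge), so justify $\deg\Phi\le Q$ by discarding empty words and using $F_c\ge 1$ and $E_c\ge V_c-1$ on each connected component, which gives $E+r-F\le 2E\le Q$.
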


\begin{remark}
The vanishing of $\nu_m$ for all odd $m$ reflects the algebraic structure associated with the GUE or Haar unitary matrices, due to its complex symmetries. 
Specifically, the $1/N$-expansions for GUEs or Haar unitary matrices presented in Proposition \ref{prop:ExpansionPolyGUE} and in other theorems below reduce to $1/N^2$-expansion, i.e. 
\begin{align*}  
\E[\tr h_1(X_1^N)\cdots \tr h_r(X_r^N)]  
= \sum_{k=0}^{m-1} \frac{\nu_{2k}(h_1,\ldots,h_r)}{N^{2k}}
+ O\left(N^{-2m}\right),  
\end{align*}
where $\nu_{2k+1}=0$ for all $k\geq 0$. Here we note, however, that this cancellation phenomenon is specific to the GUE or Haar unitary matrices and does not generally hold for other random matrix models; in many ensembles, like GOE/GSE/$\Ort(N)$/$\Sp(N)$, odd-order terms survive.  
\end{remark}

\begin{proof}
    For $h_i \in \mathcal{P}_{q_i}\ (i=1,...,r)$, 
    the existence of a polynomial
    $\Phi_{h_1, \dots , h_r}$ of degree at most $\tilde{q}_1 q_1+\cdots +\tilde{q}_r q_r$ so that
    \begin{align*}
        \E [\tr h_1 (X_1^N)\cdots \tr h_r (X_r^N)]
        =\Phi_{h_1, \dots , h_r}(\tfrac{1}{N})
        =\Phi_{h_1, \dots , h_r}(-\tfrac{1}{N}).
    \end{align*}
    can be verified by the genus expansion for GUE, for example, see section 1.7 in \cite{MR3585560_MingoSpeicher17}. 
    Now, define $\nu_m(h_1,\dots , h_r)\coloneq \frac{\Phi^{(m)}_{h_1, \dots , h_r}(0)}{m!}$ for all $m \in \Z_{\geq 0}$, then $\nu_m$ is an $r$-linear functional, and by Taylor's theorem, we have
    \begin{align}\label{ineq:Taylor}
    \left|\Phi_{h_1, \dots , h_r}(\tfrac{1}{N}) - \sum_{k=0}^{m-1} \frac{\nu_k(h_1, \dots , h_r)}{N^k}\right| 
    \leq \frac{\|\Phi^{(m)}_{h_1, \dots , h_r}\|_{[0,\frac{1}{N}]}}{m! N^m}.
    \end{align}
    Here we note that the symmetry $\Phi_{h_1, \dots , h_r}(\tfrac{1}{N})=\Phi_{h_1, \dots , h_r}(-\tfrac{1}{N})$ implies that all the coefficients of odd powers of $x$ in $\Phi_{h_1, \dots , h_r}$ are zero, hence $\nu_m =0$ for any odd $m$.
    
    Consequently, our goal is to estimate
    $\|\Phi^{(m)}_{h_1, \dots , h_r}\|_{[0,\frac{1}{N}]}$ for large enough $N$.
    By Lemma \ref{lemma:APrioriBound}, for any point $x \in \{\frac{1}{N} \ | \ N\in \N \text{ such that } N \geq \tilde{q}_1 q_1 + \cdots + \tilde{q}_r q_r\}$,
    \begin{align*}
    |\Phi_{h_1, ..., h_r} (x)| \leq 2 \|h_1\|_{[-K_1, K_1]} \cdots \|h_r\|_{[-K_r, K_r]}.
    \end{align*}
    Since $\Phi_{h_1,...,h_r}$ is a polynomial of degree at most $\tilde{q}_1 q_1 + \cdots + \tilde{q}_r q_r$, by applying the interpolation method (Lemma \ref{lemma:Interpolation}), 
    with $\delta \coloneq \frac{1}{24(\tilde{q}_1 q_1 + \cdots + \tilde{q}_r q_r)}$ and some universal constant $C$, 
    \begin{align*}
    \|\Phi_{h_1,...,h_r}\|_{[0,\delta]} 
    &\leq C \sup_{\frac{1}{N} \leq 2\delta} \left|\Phi_{h_1,...h_r}(\tfrac{1}{N})\right|\notag\\
    &\leq C \|h_1\|_{[-K_1, K_1]} \cdots \|h_r\|_{[-K_r, K_r]}.
    \end{align*}
    By the symmetry of $\Phi_{h_1,...,h_r}$, the same bound also holds for $\|\Phi_{h_1,...,h_r}\|_{[-\delta,0]}$. 
    We now apply Bernstein's inequality (Lemma \ref{lemma:BernsteinIneq}) to bound 
    \begin{align}\label{ineq:PhiDerivative_Bound}
    &\|\Phi_{h_1,...,h_r}^{(m)}\|_{[-\frac{\delta}{2}, \frac{\delta}{2}]} 
    \leq (\tfrac{96}{\sqrt{3}}(\tilde{q}_1 q_1 + \cdots + \tilde{q}_r q_r))^{2m} \|\Phi_{h_1,...,h_r}\|_{[-\delta,\delta]} \notag \\
    &\quad \leq (C(\tilde{q}_1 q_1 + \cdots + \tilde{q}_r q_r))^{2m} \|h_1\|_{[-K_1, K_1]} \cdots \|h_r\|_{[-K_r, K_r]},
    \end{align}    
    for all $m \in \mathbb{Z}_{\geq 0}$, with some universal constant $C$, and it concludes the proof of \eqref{ineq:nu_Bound}.

    Finally, to conclude the proof of \eqref{ineq:ExpansionPolyGUE}, we combine the bound \eqref{ineq:PhiDerivative_Bound} with \eqref{ineq:Taylor}, which yields that, for $\frac{1}{N}\leq \frac{\delta}{2} \text{ i.e. } N \geq 48 (\tilde{q}_1 q_1 + \cdots + \tilde{q}_r q_r)$, 
    \begin{align*}
    &\left| \E [\tr h_1 (X_1^N)\cdots \tr h_r (X_r^N)] - \sum_{k=0}^{m-1} \frac{\nu_k(h_1, \dots , h_r)}{N^k}\right| \notag \\
    &\quad \leq \frac{(C(\tilde{q}_1 q_1 + \cdots + \tilde{q}_r q_r))^{2m} \|h_1\|_{[-K_1, K_1]} \cdots \|h_r\|_{[-K_r, K_r]}}{m! N^m}.
    \end{align*}
\end{proof}

We now present the result of this section, Theorem \ref{thm:ExpansionSmoothGUE}, which shows that the expansion holds not only for polynomial test functions but also for arbitrary bounded smooth test functions.
In order to state it, we introduce the following notation. 
For $i=1,...,r$, define $K_i=(Cd)^{\tilde{q}_i}\|X_i^F\|$.
Given any $r$-tuple of smooth functions $h_i \in C^\infty (\R)\ (i=1,...,r)$, set $f_i(\theta):=h_i (K_i \cos \theta)$, and define
$F(x_1,\dots, x_r):= f_1^{(1)}(x_1)\cdots f_r^{(1)}(x_r)$.

\begin{theorem}[Smooth Asymptotic expansion for GUE]\label{thm:ExpansionSmoothGUE}
For some universal constants $C,c>0$, the following holds. 
For any $m \in \mathbb{Z}_{\geq 0}$, there exists an $r$-linear functional $\nu_m:C^\infty (\R) ^r\to\mathbb{R}$ 
extending the map of Proposition \ref{prop:ExpansionPolyGUE},
    such that 
    for every $m, N \in \mathbb{N}$ with $m \leq \frac{N}{2}$, we have 
    \begin{align}\label{ineq:ExpansionSmoothGUE}
    &\left|\E [\tr h_1 (X_1^N)\cdots \tr h_r(X_r^N)]-\sum_{k=0}^{m-1} \frac{\nu_k(h_1, \dots ,h_r)}{N^k}\right| \notag \\
    &\leq \frac{C^{2m+r}(\tilde{q}_1+\cdots +\tilde{q}_r)^{2m}}{m!N^m} \|F\|_{C^{2m}([0,2\pi]^r)} \notag \\
    & \quad + Cdr \e^{-cN} (\|h_1\|_{(-\infty, \infty)}\cdots \|h_r\|_{(-\infty, \infty)} + \|F\|_{[0,2\pi]^r} ),
    \end{align}
where $\|F\|_{C^{2m}([0,2\pi]^r)}$ was defined in Equation \eqref{def-Cm-norm}.
    Moreover, we have $\nu_{m}=0$ for any odd $m$.
\end{theorem}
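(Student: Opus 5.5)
The plan is to follow the Chebyshev-expansion strategy of \cite{VH_newapproach2}, now applied in $r$ variables. Substituting $x_i = K_i\cos\theta_i$ turns $h_i$ on $[-K_i,K_i]$ into the even $2\pi$-periodic function $f_i$. Hence $h_i(x)=\sum_{j\ge 0} a^{(i)}_j T_j(x/K_i)$, where the $a^{(i)}_j$ are the cosine coefficients of $f_i$ and $T_j$ is the $j$-th Chebyshev polynomial. The product $f_1(\theta_1)\cdots f_r(\theta_r)$ has the multivariate cosine coefficients $a^{(1)}_{j_1}\cdots a^{(r)}_{j_r}$, and integrating by parts $2m$ times in each variable relates these coefficients to derivatives of the product. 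I read $F$ as this tensor product $f_1\otimes\cdots\otimes f_r$, and expect the bound
\[
\sum_{j_1,\dots,j_r}\Big(\prod_i |a^{(i)}_{j_i}|\Big)(j_1+\cdots+j_r)^{2m}
\le C^{r}\,\|F\|_{C^{2m+2r}}\text{-type quantity}.
\]
After redistributing derivatives, this should be controlled by $C^{2m+r}\|F\|_{C^{2m}}$, up to the summability factor $\sum_j j^{-2}$ in each variable; if that loses derivatives, I would absorb the loss into the constants in the way \cite{VH_newapproach2} does.

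First, define $\nu_k$ on smooth inputs. For $h_i$ as above set
\[
\nu_k(h_1,\dots,h_r):=\sum_{j_1,\dots,j_r} a^{(1)}_{j_1}\cdots a^{(r)}_{j_r}\,\nu_k\big(T_{j_1}(\cdot/K_1),\dots,T_{j_r}(\cdot/K_r)\big).
\]
Since $\|T_j(\cdot/K_i)\|_{[-K_i,K_i]}=1$, Proposition \ref{prop:ExpansionPolyGUE} bounds each term by $(C\sum_i\tilde q_i j_i)^{2k}/k!$. Together with the coefficient decay above, the series converges absolutely. The resulting functional is $r$-linear and agrees with the polynomial $\nu_k$ on polynomials, since for a polynomial the expansion is finite. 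Density of polynomials in the weighted $C^{2k}$-type norm, combined with Lemma \ref{lemma:extensionBoundedMultilinear}, then gives that the extension is the unique continuous one. Oddness is inherited: $\nu_k=0$ for odd $k$ because every term vanishes.

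Second, bound the error. Write $\E[\prod_i\tr h_i(X_i^N)]$ as the sum of the event $\{\|X_i^N\|\le K_i\ \forall i\}$ and its complement. The complement costs at most $\|h_1\|_\infty\cdots\|h_r\|_\infty\cdot Crd\,\e^{-cN}$ by \eqref{ineq:prob_tail} and a union bound. On the good event, each $\tr h_i(X_i^N)$ equals its absolutely convergent Chebyshev series, so the truncated expectation is $\sum_{\bm j}\prod_i a^{(i)}_{j_i}\,\E[\prod_i\tr T_{j_i}(X_i^N/K_i)\,1_{\text{good}}]$. I then split the multi-indices $\bm j$:
\begin{itemize}
\item For $\sum_i\tilde q_i j_i\le N/C$, restore the indicator at a cost of $C\sqrt{dr}\,\e^{-cN}$ per term via \eqref{ineq:tr_tail}, and apply \eqref{ineq:ExpansionPolyGUE}. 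This gives error $(C\tilde q\,|\bm j|)^{2m}/(m!N^m)$ per term, with $\tilde q=\max_i\tilde q_i\le\sum_i\tilde q_i$. Summing against the coefficients gives the main term $C^{2m+r}(\sum_i\tilde q_i)^{2m}\|F\|_{C^{2m}}/(m!N^m)$, and summing the exponential tails gives $\e^{-cN}\sum_{\bm j}|\prod_i a^{(i)}_{j_i}|\lesssim \e^{-cN}\|F\|$.
\item For large $|\bm j|$, the truncated expectation is bounded by $1$ for the random term, and the series defining $\nu_k$ for $k<m$ is bounded by \eqref{ineq:nu_Bound}. The tail sum $\sum_{|\bm j|>N/(C\tilde q)}\prod_i|a^{(i)}_{j_i}|\,(1+\sum_{k<m}(C\tilde q|\bm j|)^{2k}/(k!N^k))$ is dominated by $(C\tilde q)^{2m}N^{-m}\sum_{\bm j}|\prod_i a^{(i)}_{j_i}|\,|\bm j|^{2m}/N^{m}$. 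Each factor $(C\tilde q|\bm j|)^{2k}/N^k$ with $k<m$ is at most $(C\tilde q|\bm j|)^{2m}/N^m$ times powers of $N/|\bm j|^2$, and the condition $m\le N/2$ keeps the resulting factorial ratios under control.
\end{itemize}

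I expect the main obstacle to be this high-frequency tail: making the $m$-dependence exactly $C^{2m}/m!$ when comparing $\sum_{k<m}|\bm j|^{2k}/(k!N^k)$ to $|\bm j|^{2m}/(m!N^m)$. I would handle it as \cite{VH_newapproach2} does, bounding $\sum_{k<m}x^k/k!\le \e\, x^{m}/m!$ for $x\ge m$, which holds whenever $|\bm j|^2/N\gtrsim m$. In the intermediate range $N/(C\tilde q)<|\bm j|\lesssim\sqrt{mN}$ one has $|\bm j|\ge N/(C\tilde q)\ge 2m/(C\tilde q)$ because $m\le N/2$, so $x=(C\tilde q|\bm j|)^2/N\gtrsim m$ still holds once $C$ is taken large. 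The secondary technical point is converting the weighted $\ell^1$ norm of the cosine coefficients into $\|F\|_{C^{2m}([0,2\pi]^r)}$ with only a $C^r$ loss, which is done by integration by parts in each $\theta_i$.
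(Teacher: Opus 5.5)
Your architecture matches the paper's proof. You use the Chebyshev/cosine expansion and define $\nu_k$ on smooth inputs directly as the absolutely convergent series; this is equivalent to the paper's continuous extension and makes the paper's term (II) a literal series tail. You restrict to the good event and split $\bm{j}$ at $\sum_i\tilde q_ij_i\approx N/C$, using Proposition~\ref{prop:ExpansionPolyGUE} and \eqref{ineq:tr_tail} on low modes and crude bounds on high modes. There, your observation $x=(C\sum_i\tilde q_ij_i)^2/N>N\ge 2m$ does the same job as the paper's $\sum_{k\le m}\frac{1}{k!N^{m-k}}\le \frac{2}{m!}$.

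\textbf{The gap is the step you called secondary.} With your reading $F=f_1\otimes\cdots\otimes f_r$, converting $\sum_{\bm{j}}\prod_i|a^{(i)}_{j_i}|(\sum_i\tilde q_ij_i)^{2m}$ into $C^{2m+r}(\sum_i\tilde q_i)^{2m}\|F\|_{C^{2m}}$ is false. Already for $r=1$ it would require $\sum_j j^{n}|a_j|\le C\|f\|_{C^{n}}$, and no universal $C$ works: Rudin--Shapiro-type cosine polynomials have coefficient $\ell^1$-norm far larger than their sup norm. A lost derivative cannot be absorbed into a constant; it must be recorded in the norm. The paper does exactly this: just before the theorem it defines $F:=f_1^{(1)}\otimes\cdots\otimes f_r^{(1)}$. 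That extra derivative per variable pays for the summability factor you mention:
\[
\sum_{j\ge1}j^{n}|a^{(i)}_j|\le\Big(\sum_{j\ge1}j^{-2}\Big)^{1/2}\Big(\sum_{j\ge1}j^{2n+2}|a^{(i)}_j|^2\Big)^{1/2}\le C\|f_i^{(n+1)}\|_{L^2([0,2\pi])}.
\]
The weight $(\sum_i\tilde q_ij_i)^{2m}$ is then handled by the multinomial expansion. It factorizes the $\bm{j}$-sum into one-variable sums and gives $C^r(\sum_i\tilde q_i)^{2m}\max_{|\bm{\alpha}|=2m}\prod_i\|f_i^{(\alpha_i+1)}\|_\infty\le C^r(\sum_i\tilde q_i)^{2m}\|F\|_{C^{2m}}$. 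The alternatives you mention do not work. Integrating by parts $2m$ times in each variable produces the weight $\prod_i j_i^{2m}$, a mixed derivative of order $2mr$. Passing to $\max_i\tilde q_i$ only gives $(Cr\max_i\tilde q_i)^{2m}$, which exceeds $(C\sum_i\tilde q_i)^{2m}$ by up to a factor $r^{2m}$, and a universal $C$ cannot absorb that.

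\textbf{A second issue appears once $F$ is read correctly, and your proposal does not address it.} The zero modes $a^{(i)}_0$ are invisible to $F$. Yet they enter every multinomial term with $\alpha_i=0$, and also your term $\e^{-cN}\sum_{\bm{j}}\prod_i|a^{(i)}_{j_i}|$.

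The paper discards them by setting $a^{(i)}_0=0$ without loss of generality, via multilinearity and induction on $r$. That reduction does not deliver the displayed bound. The pieces where some $h_i$ is replaced by $a^{(i)}_0$ are controlled by $|a^{(i)}_0|$ times an $(r-1)$-function quantity, not by $\|F\|$. For $r\ge 2$ the inequality genuinely fails without such a term. Take $d=1$, $r=2$, $P_1=P_2=x_1$, $h_1\equiv 1$, and $h_2$ bounded with $h_2(x)=x^4$ on $[-K_2,K_2]$. Then $F\equiv 0$, while $\E[\tr h_1(X_1^N)\tr h_2(X_2^N)]=2+N^{-2}+O(\e^{-cN})$. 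So for $m=2$ the left-hand side is not $O(\e^{-cN})$, whatever the $N$-independent numbers $\nu_0,\nu_1$ are.

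To get a correct estimate for $r\ge 2$, keep the zero modes on the right-hand side. For example, use $\sum_{j\ge0}|a^{(i)}_j|\le\|f_i\|_\infty+C\|f_i^{(1)}\|_\infty$ in the slots with $\alpha_i=0$. With that change and $F$ read as $\prod_i f_i^{(1)}$, the rest of your argument goes through.
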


\begin{proof}[Proof of Theorem \ref{thm:ExpansionSmoothGUE}]
We recall that the Chebyshev polynomial of the first kind $T_n$ is the polynomial of degree $n$ defined by $T_n(\cos \theta)=\cos (n\theta)$. 
We express $h_1\in \mathcal{P}_{q_1}, \dots , h_r \in \mathcal{P}_{q_r}$ as 
\begin{align}\label{eq:Chebyshev}
h_1(x_1)= \sum_{j=0}^{q_1} a_j^{(1)} T_j(K_1^{-1} x_1), \dots , 
h_r (x_r)=\sum_{j=0}^{q_r} a_j^{(r)} T_j(K_r^{-1} x_r) ,
\end{align}
for some real coefficients $a_0^{(i)},...,a_{q_i}^{(i)} \ (i=1,...,r)$.

We divide the proof into two main steps.
In Step 1, we show that $\nu_m \ (m \in \Z_{\geq 0})$ is well-defined as an $r$-linear functional on $C^\infty (\R)^r$.
In Step 2, we verify  
$\nu_m$ 
satisfies equation \eqref{ineq:ExpansionSmoothGUE}.

\medskip\noindent\textbf{Step 1.}

Let $\nu_m \ (m \in \Z_{\geq 0})$ be the $r$-linear functionals defined in Proposition \ref{prop:ExpansionPolyGUE}.
Fix the polynomials $h_1\in \mathcal{P}_{q_1}, \dots , h_r \in \mathcal{P}_{q_r}$ and let $a_0^{(i)},...,a_{q_i}^{(i)} \ (i=1,...,r)$ be the Chebyshev coefficients as \eqref{eq:Chebyshev}.

Let us fix $m \in \Z_{\geq 0}$ for the moment. 
By Proposition \ref{prop:ExpansionPolyGUE}, 
since $\|T_j(K^{-1}\cdot)\|_{[-K,K]}=1$ for all $j$, we have
\begin{align}\label{ineq:CptDistBound}
|\nu_m(h_1,\dots,h_r)|
&\leq \sum_{j_1=0}^{q_1}\cdots \sum_{j_r=0}^{q_r} \left|a_{j_1}^{(1)}\cdots a_{j_r}^{(r)}\right| \left|\nu \left(T_{j_1}(K_1^{-1}\cdot), \dots , T_{j_r}(K_r^{-1} \cdot)\right)\right| \notag \\
&\leq C \sum_{j_1=0}^{q_1}\cdots \sum_{j_r=0}^{q_r} \left|a_{j_1}^{(1)}\cdots a_{j_r}^{(r)}\right| (j_1 + \cdots + j_r)^{2m},
\end{align}
with some constant $C$, which may depend on $m$.
Here, we have
\begin{align}\label{eq:ChebyshevCoeff}
&\sum_{j_1=0}^{q_1}\cdots \sum_{j_r=0}^{q_r} \left|a_{j_1}^{(1)}\cdots a_{j_r}^{(r)}\right| (j_1 + \cdots + j_r)^{2m}\notag \\
&=\sum_{\substack{m_1,...,m_r\geq 0 \text{ s.t.}\\m_1+\cdots+m_r =2m}} \frac{(2m)!}{m_1!\cdots m_r!} \left(\sum_{j_1=0}^{q_1}|a_{j_1}^{(1)}|j_1^{m_1}\right)\cdots \left(\sum_{j_r=0}^{q_r}|a_{j_r}^{(r)}|j_r^{m_r}\right).
\end{align}
Here, for 
any $i=1,...,r$, observe that under the coordinate change $x_i = K_i \cos \theta$, the Chebyshev expansion of $h_i$ becomes the Fourier cosine series of $f_i$, namely $f_i(\theta) = \sum_{j=0}^{q_i} a_{j}^{(i)} \cos(j\theta)$. 
Applying the Cauchy-Schwarz inequality and then Parseval's identity applied to the $(m_i+1)$-th derivative, we obtain
\begin{align}\label{ineq:CauchySchwarz-Parseval}
\sum_{j_i=0}^{q_i}|a_{j_i}^{(i)}|j_i^{m_i}
&\leq \left(\sum_{j_i=1}^{q_i} \frac{1}{j_i^2}\right)^{1/2} \left(\sum_{j_i=1}^{q_i} j_i^{2 m_i+2} |a_{j_i}^{(i)}|^2\right)^{1/2}\notag \\
&\leq C\|f_i^{(m_i+1)}\|_{L^2([0,2\pi])}
\leq \bar{C} \|h_i\|_{C^{m_i+1}([-K_i,K_i])},
\end{align}
with some universal constant $C$, and with some other constant $\bar{C}$, which may depend on $m_i$. 
We now combine \eqref{ineq:CptDistBound} with \eqref{eq:ChebyshevCoeff} and \eqref{ineq:CauchySchwarz-Parseval} to get
\begin{align}\label{ineq:nu_m_Bound}
|\nu_m(h_1,\dots,h_r)|
&\leq C \sum_{\substack{m_1,...,m_r\geq 0 \text{ s.t.}\\m_1+\cdots+m_r =2m}} \frac{(2m)!}{m_1!\cdots m_r!}\|h_1\|_{C^{m_1+1}([-K_1,K_1])}\cdots \|h_r\|_{C^{m_r+1}([-K_r,K_r])}\notag \\
& \leq C \|h_1^{(1)}(x_1)\cdots h_r^{(1)}(x_r)\|_{C^{2m}([-K_1,K_1]\times\cdots\times [-K_r,K_r])},
\end{align}
with some constant $C$, which depends on $m,r$.

By the bound \eqref{ineq:nu_m_Bound}, we can now conclude that $\nu_m$ can be uniquely extended on $(h_1,...,h_r) \in C^\infty (\mathbb{R})^r$ and also satisfy the same bound, 
using 
Lemma \ref{lemma:extensionBoundedMultilinear} together with
the fact that 
polynomials are dense in $C^\infty(\mathbb{R})$ with respect to the topology of uniform convergence of derivatives on compact sets, for any $h_1, \dots, h_r \in C^\infty(\mathbb{R})$.

Here, we note that the extended functional $\nu_m$ on $C^\infty(\mathbb{R})^r$ still satisfies $\nu_m =0$ for any odd $m$, by the continuity of the above extension.

\medskip\noindent\textbf{Step 2.}

We now fix bounded functions $h_1,...,h_r \in C^\infty(\R)$, whose Chebyshev expansions on $[-K_i,K_i] \ (i=1,...,r)$ are written as
\begin{align}\label{eq:ChebushevExpansions}
h_1(x_1)= \sum_{j=0}^{\infty} a_j^{(1)} T_j(K_1^{-1} x_1), ... , 
h_r (x_r)=\sum_{j=0}^{\infty} a_j^{(r)} T_j(K_r^{-1} x_r), 
\end{align}
for any $x_i \in [-K_i, K_i] \ (i=1,...,r)$.
Since the above Chebyshev expansion of $h_i$ on $[-K_i,K_i]$ is the Fourier cosine expansion of the function $f_i(\theta)=h_i(K_i \cos \theta)$ on $[0, 2\pi]$, 
the series in \eqref{eq:ChebushevExpansions} converges uniformly on $[-K_i,K_i]$ for each $i=1,...,r$.
 
Thanks to the
multilinearity of both the expectation and the functionals $\mu_k$, and using the induction on the number of factors $r$, we may reduce the problem to the case where every $h_i$ has a vanishing constant term. 
Indeed, if $h_i$ is constant, the term $\tr h_i(X_i^N)$ is deterministic and factors out of the expectation, reducing the problem to the case of $r-1$ test functions. 
Therefore, without loss of generality, we assume $a_0^{(i)}=0$ for all $i=1,...,r$.

Let $A$ be a set defined by
\begin{align*}
    A:=\{(q_1,...,q_r)\in \Z_{\geq 0}^r \ | \ C(\tilde{q}_1 q_1+\cdots +\tilde{q}_r q_r)\leq N \},
\end{align*}
that is, $A$ is the set of parameters for which \eqref{ineq:ExpansionPolyGUE} holds.
We have
\begin{align}\label{ineq:I+II+III}
&\left|\E[\tr h_1 (X_1^N)\cdots \tr h_r (X_r^N)]-\sum_{k=0}^{m-1} \frac{\nu_k(h_1, \dots , h_r)}{N^k}\right| \notag \\
&\leq \left|\sum_{(q_1, ..., q_r) \in A }\Bigg(\E\left[\tr (a_{q_1}^{(1)} T_{q_1}(K_1^{-1} X_1^N))\cdots \tr (a_{q_r}^{(r)} T_{q_r}(K_r^{-1} X_r^N))\right]\right. \notag \\
& \hspace{13em}\left.-\sum_{k=0}^{m-1} \frac{\nu_k\left(a_{q_1}^{(1)} T_{q_1}(K_1^{-1}\cdot ) , \dots , a_{q_r}^{(r)} T_{q_r}(K_r^{-1} \cdot)\right)}{N^k}\Bigg)\right| \notag \\
&\quad +\left|\sum_{k=0}^{m-1} \frac{\nu_k(h_1, \dots , h_r)-{\displaystyle \sum_{(q_1, ..., q_r) \in A}} \nu_k(a_{q_1}^{(1)} T_{q_1}(K_1^{-1}\cdot ) , \dots , a_{q_r}^{(r)} T_{q_r}(K_r^{-1} \cdot))}{N^k} \right|\notag \\
&\quad +\left|\E\left[\tr h_1 (X_1^N)\cdots \tr h_r (X_r^N)- \sum_{(q_1, ..., q_r) \in A} \tr a_{q_1}^{(1)} T_{q_1}(K_1^{-1} X_1^N)\cdots \tr a_{q_r}^{(r)} T_{q_r}(K_r^{-1} X_r^N)\right]\right| \notag \\
&= (\mathrm{I}) + (\mathrm{II}) +(\mathrm{III}),
\end{align}
We now estimate the three terms (I), (II), and (III) on the right-hand side.

\medskip\noindent \underline{First term (I):} 
Applying the asymptotic expansion for polynomial test functions, \eqref{ineq:ExpansionPolyGUE}, we have
\begin{align*}
(\mathrm{I})
&\leq \sum_{(q_1, ..., q_r) \in A} |a_{q_1}^{(1)}|\cdots |a_{q_r}^{(r)}|
\Bigg|\E \left[\tr T_{q_1}(K_1^{-1}X_1^N)\cdots \tr T_{q_r}(K_r^{-1}X_r^N)\right]\notag \\
&\hspace{14em}-\sum_{k=0}^{m-1} \frac{\nu_k (T_{q_1}(K_1^{-1} \cdot ),..., T_{q_r}(K_r^{-1}\cdot))}{N^k}\Bigg| \notag \\
&\leq \sum_{(q_1, ..., q_r) \in A} |a_{q_1}^{(1)}|\cdots |a_{q_r}^{(r)}|
\frac{(C(\tilde{q}_1 q_1 + \cdots + \tilde{q}_r q_r))^{2m}}{m! N^m},
\end{align*}
where we used the fact that $\|T_j(K^{-1}\cdot)\|_{[-K,K]}=1$ for all $j$.

\medskip\noindent \underline{Second term (II):}
Using \eqref{ineq:nu_Bound}, we have
\begin{align*}
(\mathrm{II})
&\leq \sum_{k=0}^{m-1} \frac{1}{N^k} \left|\nu_k(h_1, \dots , h_r)-\sum_{(q_1, ..., q_r) \in A} \nu_k(a_{q_1}^{(1)} T_{q_1}(K_1^{-1}\cdot ) , \dots , a_{q_r}^{(r)} T_{q_r}(K_r^{-1} \cdot))\right| \notag \\
&\leq \sum_{k=0}^{m-1} \frac{1}{k! N^k} \sum_{(q_1, ..., q_r)\notin A } |a_{q_1}^{(1)}| \cdots |a_{q_r}^{(r)}|(C(\tilde{q}_1q_1 + \cdots +\tilde{q}_r q_r))^{2k} \notag \\
& \leq \sum_{k=0}^{m-1} \frac{1}{k! N^{m-k}}\sum_{(q_1, ..., q_r) \notin A } |a_{q_1}^{(1)}| \cdots |a_{q_r}^{(r)}|\frac{(C(\tilde{q}_1q_1 + \cdots +\tilde{q}_r q_r))^{2m}}{N^m},
\end{align*}
where in the last inequality, we applied the fact that $(q_1, ..., q_r) \notin A$, that is $C(\tilde{q}_1 q_1+\cdots +\tilde{q}_r q_r)>N$, implies 
\begin{align*}
(C(\tilde{q}_1q_1 + \cdots +\tilde{q}_r q_r))^{2k}
\leq \frac{(C(\tilde{q}_1q_1 + \cdots +\tilde{q}_r q_r))^{2m}}{N^{2m-2k}}.
\end{align*}

\medskip\noindent \underline{Third term (III):}
By \eqref{eq:ChebushevExpansions}, we have
\begin{align*}
(\mathrm{III}) 
&=\Bigg|\E\left[\tr h_1 (X_1^N)\cdots \tr h_r (X_r^N)- \sum_{(q_1, ..., q_r) \in A} \tr a_{q_1}^{(1)} T_{q_1}(K_1^{-1} X_1^N)\cdots \tr a_{q_r}^{(r)} T_{q_r}(K_r^{-1} X_r^N)\right]\Bigg|\\
&\leq \Bigg|\E \Bigg[\Bigg(\tr h_1 (X_1^N)\cdots \tr h_r (X_r^N)- \sum_{(q_1, ..., q_r) \in A} \tr a_{q_1}^{(1)} T_{q_1}(K_1^{-1} X_1^N)\cdots \tr a_{q_r}^{(r)} T_{q_r}(K_r^{-1} X_r^N)\Bigg)\\
&\hspace{26em}\cdot 1_{\{\|X_1^N\|\leq K_1, \dots , \|X_r^N\|\leq K_r\}}\Bigg]\Bigg|\\
&\quad + \Bigg|\E \Bigg[\tr h_1 (X_1^N)\cdots \tr h_r(X_r^N)\cdot 1_{\{\|X_1^N\|> K_1\ \mathrm{or} \cdots \mathrm{or}\  \|X_r^N\|> K_r\}}\Bigg]\Bigg|\\
& \quad + \Bigg|\E \left[\sum_{(q_1, ..., q_r) \in A} \Big( \tr a_{q_1}^{(1)} T_{q_1}(K_1^{-1} X_1^N)\cdots \tr a_{q_r}^{(r)} T_{q_r}(K_r^{-1} X_r^N)\Big) \cdot 1_{\{\|X_1^N\|> K_1\ \mathrm{or} \cdots \mathrm{or}\  \|X_r^N\|> K_r\}}\right]\Bigg|\\
& \leq \sum_{(q_1, ..., q_r)\notin A} |a_{q_1}^{(1)}| \cdots |a_{q_r}^{(r)}|\\
& \quad + Cdr\e^{-cN}\|h_1\|_{(-\infty,\infty)}\cdots \|h_r\|_{(-\infty,\infty)} \\
& \quad \quad + C \sqrt{dr}\cdot \e^{-cN} \sum_{(q_1, ..., q_r)\in A}|a^{(1)}_{q_1}|\cdots |a^{(r)}_{q_r}|,
\end{align*}
with some universal constants $C,c>0$, where we applied Lemma \ref{lemma:APrioriBound} in the last inequality.

Combining the above estimates in (I)–(III), we have
\begin{align}\label{ineq:I+II+III_2}
&(\mathrm{I})+(\mathrm{II})+(\mathrm{III}) \notag \\
&\leq \sum_{(q_1, ..., q_r)\in A} |a_{q_1}^{(1)}|\cdots |a_{q_r}^{(r)}| \frac{(C(\tilde{q}_1q_1 + \cdots +\tilde{q}_r q_r))^{2m}}{m! N^m} \notag \\
&\quad +\sum_{k=0}^{m-1} \frac{1}{k! N^{m-k}}\sum_{(q_1, ..., q_r)\notin A } |a_{q_1}^{(1)}| \cdots |a_{q_r}^{(r)}|\frac{(C(\tilde{q}_1q_1 + \cdots +\tilde{q}_r q_r))^{2m}}{N^m} \notag \\
&\quad +\sum_{(q_1, ..., q_r)\notin A} |a_{q_1}^{(1)}| \cdots |a_{q_r}^{(r)}| + Cdr\e^{-cN} \Big( \|h_1\|_{(-\infty,\infty)}\cdots \|h_r\|_{(-\infty,\infty)} +  \sum_{(q_1, ..., q_r)\in A}|a^{(1)}_{q_1}|\cdots |a^{(r)}_{q_r}|\Big) \notag \\
&\leq \sum_{k=0}^m \frac{1}{k! N^{m-k}} \sum_{q_1,...,q_r}|a_{q_1}^{(1)}| \cdots |a_{q_r}^{(r)}| \frac{(C(\tilde{q}_1 q_1 + \cdots +\tilde{q}_r q_r))^{2m}}{N^m} \notag\\
& \quad + Cdr\e^{-cN} \Big( \|h_1\|_{(-\infty,\infty)}\cdots \|h_r\|_{(-\infty,\infty)} +  \sum_{q_1, ..., q_r}|a^{(1)}_{q_1}|\cdots |a^{(r)}_{q_r}|\Big)
\end{align}
Here we note that
\begin{align}\label{ineq:FourierCoeff}
&\sum_{q_1,..,q_r}  |a_{q_1}^{(1)}|\cdots |a_{q_r}^{(r)}| (\tilde{q}_1 q_1 + \cdots +\tilde{q}_r q_r)^m \notag \\
&=\sum_{\substack{m_1,...,m_r \ \mathrm{s.t.}\\ m_1+\cdots +m_r=m}} \frac{m!}{m_1!\cdots m_r!} \tilde{q}_1^{m_1}\cdots \tilde{q}_r^{m_r} \sum_{q_1,..,q_r}|a_{q_1}^{(1)}|q_1^{m_1}\cdots  |a_{q_r}^{(r)}|q_r^{m_r} \notag \\
&\leq C^r \sum_{\substack{m_1,...,m_r \ \mathrm{s.t.} \\ m_1+\cdots +m_r=m}} \frac{m!}{m_1!\cdots m_r!} \tilde{q}_1^{m_1} \cdots \tilde{q}_r^{m_r} \|f_1^{(m_1+1)}\|_{[0,2\pi]}\cdots \|f_r^{(m_r+1)}\|_{[0,2\pi]} \notag \\
&\leq C^r (\tilde{q}_1 +\cdots + \tilde{q}_r)^m \|F\|_{C^{m}([0,2\pi]^r)},
\end{align}
with some universal constant $C$, where we applied the following in the second-to-last inequality: 
for any smooth $h_i$ in the form \eqref{eq:ChebushevExpansions}, we have
\begin{align*}
\sum_{q_i=0}^{\infty}|a_{q_i}^{(i)}|q_i^{n}
&\leq C\|f_i^{(n+1)}\|_{L^2([0,2\pi])}
\quad \text{for any $n \in \N$},
\quad i=1,...,r,
\end{align*}
with some universal constant $C$, which is verified by the fact that \eqref{ineq:CauchySchwarz-Parseval} holds independent of the degree of the polynomial $h_i$.\\
Therefore, combining \eqref{ineq:FourierCoeff} with \eqref{ineq:I+II+III} and \eqref{ineq:I+II+III_2},
\begin{align*}
& \left|\E[\tr h_1 (X_1^N)\cdots \tr h_r (X_r^N)]-\sum_{k=0}^{m-1} \frac{\nu_k(h_1, \dots , h_r)}{N^k}\right|\\
&\leq \sum_{k=0}^m \frac{1}{k! N^{m-k}} \sum_{q_1,...,q_r}|a_{q_1}^{(1)}| \cdots |a_{q_r}^{(r)}| \frac{(C(\tilde{q}_1 q_1 + \cdots +\tilde{q}_r q_r))^{2m}}{N^m} \notag\\
& \quad + Cdr\e^{-cN} \Big( \|h_1\|_{(-\infty,\infty)}\cdots \|h_r\|_{(-\infty,\infty)} +  \sum_{q_1, ..., q_r}|a^{(1)}_{q_1}|\cdots |a^{(r)}_{q_r}|\Big) \\
& \leq \Big(\sum_{k=0}^m \frac{1}{k! N^{m-k}} \Big)\frac{C^{2m+r} (\tilde{q}_1+\cdots +\tilde{q}_r)^{2m}}{N^m} \|F\|_{C^{2m}([0,2\pi]^r)}\\
& \quad + Cdr\e^{-cN} \Big( \|h_1\|_{(-\infty,\infty)}\cdots \|h_r\|_{(-\infty,\infty)} +  \|F\|_{[0,2\pi]^r}\Big).
\end{align*}
This estimate, combined with the fact that we have
\begin{align*}
\sum_{k=0}^m \frac{1}{k! N^{m-k}} 
\leq \frac{1}{m!} \sum_{k=0}^m \left(\frac{m}{N}\right)^{m-k} 
\leq \frac{2}{m!},
\end{align*}
for $m \leq \frac{N}{2}$, provides the desired conclusion, which completes the proof.

\end{proof}

\begin{remark}\label{remark:C^M_GUE}
Although in the proof of Theorem~\ref{thm:ExpansionSmoothGUE} we assume that the test functions are $C^\infty$, this assumption can be relaxed. 
More precisely, the functional $\nu_m$ admits a unique extension to $C^{2m+1}(\R)^r$, and the asymptotic expansion \eqref{ineq:ExpansionSmoothGUE} remains valid for test functions $h_1,\dots,h_r \in C^{2m+1}(\R)$.
\end{remark}

\subsection{Asymptotic expansion for multi-trace GOE and GSE}\label{sec:GOE-GSE}
In this section, we briefly discuss the extension of the asymptotic expansion results, Theorem \ref{thm:ExpansionSmoothGUE}, to the cases of GOE and GSE random matrices.
Most part of the arguments in the GUE case can be adapted to these cases with minor modifications, using the fact that the GOE and GSE are treated as a ``dual'', which one can find as the \textit{supersymmetric duality} in \cite{VH_newapproach2}.
So we only state the results here without going into the proofs.

Let $\bm{G}^N = (G_1^N, \ldots, G_d^N)$ and $\bm{H}^N = (H_1^N, \ldots, H_d^N)$ be independent GOE and GSE matrices of dimension $N$, respectively, and let $\bm{s} = (s_1, \ldots, s_d)$ be a free semicircular family. 
Let $r\in \N$ be fixed. 
We will also fix self-adjoint non-commutative polynomials $P_i \in 
\mathbb{C}\langle x_1, \ldots, x_d\rangle$ of degree $\tilde{q}_i$ for each $i=1,...,r$. 
We will use the following notations for simplicity, 
\[
X_i^N := P_i(\bm{G}^N), \qquad Y_i^N := P_i(\bm{H}^N), \qquad X_i^F := P_i(\bm{s}) \qquad i=1,...,r.
\]

In this setting, the supersymmetric duality allows us to observe the following. 
For any polynomial test functions $h_i \in \mathcal{P}_{q_i} \ (i=1,...,r)$, there exists some polynomial $\Phi_{h_1,...,h_r}$ of degree at most $\tilde{q}_1 q_1 + \cdots + \tilde{q}_r q_r$ such that
\begin{align*}
&\E \left[ \tr h_1(X_1^N)\cdots \tr h_r(X_r^N) \right]
= \Phi_{h_1,...,h_r} \left(\tfrac{1}{N}\right), \\
&\E \left[ \tr h_1(Y_1^N)\cdots \tr h_r(Y_r^N) \right]
= \Phi_{h_1,...,h_r} \left(-\tfrac{1}{2N}\right).
\end{align*}
This can be checked by the genus expansions for GOE and GSE in \cite{BP09_MR2480549}.

With the above modification, we can now state the following results, which are analogues of Theorem \ref{thm:ExpansionSmoothGUE}.

\begin{theorem}[Asymptotic expansions for multi-trace GOE and GSE]\label{thm:ExpansionSmoothGOE-GSE}
For any $m \in \Z_{\geq 0}$, there exists an $r$-linear functional $\nu_m$ on $C^\infty(\R)^r$ such that the folllowing holds with some universal constants $C,c>0$:\\
fix any bounded $r$-tuple of smooth functions $h_i \in C^\infty(\R) \ (i=1,...,r)$, and define 
\begin{align*}
    &f_i(\theta):=h_i (K_i \cos \theta)
    \quad \text{with} \quad
    K_i=(Cd)^{\tilde{q}_i}\|X_i^F\| , \quad i=1,...,r,\\
    &\quad \text{and } \quad F(x_1,\dots, x_r):= f_1^{(1)}(x_1)\cdots f_r^{(1)}(x_r);
\end{align*} 
then, for every $m, N \in \mathbb{N}$ with $m \leq \frac{N}{2}$, we have 
\begin{align}\label{ineq:ExpansionSmoothGOE-GSE}
    &\left|\E [\tr h_1 (X_1^N)\cdots \tr h_r(X_r^N)]-\sum_{k=0}^{m-1} \frac{\nu_k(h_1, \dots ,h_r)}{N^k}\right| \vee \left|\E [\tr h_1 (Y_1^N)\cdots \tr h_r(Y_r^N)]-\sum_{k=0}^{m-1} \frac{\nu_k(h_1, \dots ,h_r)}{(-2N)^k}\right| \notag \\
    &\leq \frac{C^{2m+r}(\tilde{q}_1+\cdots +\tilde{q}_r)^{2m}}{m!N^m} \|F\|_{C^{2m}([0,2\pi]^r)} 
    + Cdr \e^{-cN} (\|h_1\|_{(-\infty, \infty)}\cdots \|h_r\|_{(-\infty, \infty)} + \|F\|_{[0,2\pi]^r} ).
\end{align}
\end{theorem}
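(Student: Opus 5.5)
The plan is to run the GUE argument (Proposition \ref{prop:ExpansionPolyGUE} and Theorem \ref{thm:ExpansionSmoothGUE}) in parallel for the GOE and GSE, using one polynomial $\Phi_{h_1,\dots,h_r}$ of degree at most $D:=\tilde{q}_1q_1+\cdots+\tilde{q}_rq_r$ for both ensembles. By the supersymmetric duality recalled above, $\E[\prod_i \tr h_i(X_i^N)]=\Phi(\tfrac1N)$ and $\E[\prod_i\tr h_i(Y_i^N)]=\Phi(-\tfrac1{2N})$. I define $\nu_m(h_1,\dots,h_r):=\Phi^{(m)}_{h_1,\dots,h_r}(0)/m!$. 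This is $r$-linear, because $\Phi$ is determined by its values at the infinitely many points $1/N$ and the left-hand side is multilinear in $(h_i)$. Expanding in powers of $1/N$ and of $-1/(2N)$ then produces exactly the two truncated sums in \eqref{ineq:ExpansionSmoothGOE-GSE}, with the same coefficients $\nu_k$.

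\textbf{Step 1 (polynomial case).} I first need the a priori bounds of Lemma \ref{lemma:APrioriBound} for GOE and GSE. These are Lemma 4.3 and Lemma 2.2 of \cite{VH_newapproach2}, which cover all three Gaussian ensembles, combined with the same H\"older and Cauchy--Schwarz argument. They give $|\Phi(\tfrac1N)|\le 2\prod_i\|h_i\|_{[-K_i,K_i]}$ and $|\Phi(-\tfrac1{2N})|\le 2\prod_i\|h_i\|_{[-K_i,K_i]}$ for $N\ge D$.

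Lemma \ref{lemma:Interpolation} applied to $x\mapsto\Phi(x)$ bounds $\|\Phi\|_{[0,\delta]}$. Applied to $x\mapsto\Phi(-x/2)$, whose samples at $1/N$ are the GSE values, it bounds $\|\Phi\|_{[-\delta/2,0]}$. Here $\delta\asymp 1/D$. This replaces the symmetry step used for the GUE, where odd coefficients vanished.

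Lemma \ref{lemma:BernsteinIneq}, applied on the interval $[-\delta/2,\delta]$ after a translation so it is centered, bounds $\|\Phi^{(m)}\|$ near $0$ by $(CD)^{2m}\prod_i\|h_i\|_{[-K_i,K_i]}$. Taylor's theorem with remainder on $[0,\tfrac1N]$, and on $[-\tfrac1{2N},0]$, then gives the analogues of \eqref{ineq:nu_Bound} and \eqref{ineq:ExpansionPolyGUE} for both ensembles when $N\ge CD$. For the GSE the remainder carries the factor $(2N)^{-m}\le N^{-m}$.

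\textbf{Step 2 (extension to smooth functions).} I repeat Step 1 and Step 2 of the proof of Theorem \ref{thm:ExpansionSmoothGUE} verbatim. The Chebyshev/Fourier coefficient bounds \eqref{ineq:CauchySchwarz-Parseval} and Lemma \ref{lemma:extensionBoundedMultilinear} give the unique extension of $\nu_m$ to $C^\infty(\R)^r$. Then the decomposition into (I)+(II)+(III) over the index set $A$, together with \eqref{ineq:FourierCoeff}, yields the $\|F\|_{C^{2m}}$ term and the exponentially small tail terms. For the GSE I use the same decomposition with $N^{-k}$ replaced by $(-2N)^{-k}$. Since $|(-2N)^{-k}|\le N^{-k}$, the estimates on (I) and (II) carry over unchanged. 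Taking the maximum of the two bounds gives the "$\vee$" form of \eqref{ineq:ExpansionSmoothGOE-GSE}.

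\textbf{Main obstacle.} The one point needing care is the loss of the even symmetry of $\Phi$. Without it, interpolation only controls $\Phi$ on an asymmetric interval around $0$, and I must check that Bernstein still controls derivatives at points of $[-\tfrac1{2N},\tfrac1N]$. To handle this, I will apply Lemma \ref{lemma:BernsteinIneq} to the translate $y\mapsto\Phi(y+\delta/4)$ on $[-3\delta/4,3\delta/4]$, and restrict to $|x|\le\delta/4$ so that the factor $\sqrt{1-(x/\delta')^2}$ stays bounded below. This changes only the universal constants. A second point to verify is that the GSE a priori bound uses $2N\times2N$ matrices with normalized trace; the norm concentration of \cite{VH_newapproach2} still gives tails $Cd\e^{-cN}$, which is all that is needed.
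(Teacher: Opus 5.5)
Your GOE/GSE-specific steps are correct, and they are exactly the adaptation the paper intends; the paper gives no separate proof, only ``the GUE argument plus supersymmetric duality.'' Concretely, you use one polynomial $\Phi$ sampled at $\tfrac1N$ and $-\tfrac1{2N}$. Two applications of Lemma~\ref{lemma:Interpolation} control $\Phi$ on $[-\delta/2,\delta]$ in place of the GUE evenness. A recentred Lemma~\ref{lemma:BernsteinIneq} and Taylor's theorem, with $(2N)^{-m}\le N^{-m}$ on the GSE side, then finish the polynomial case.

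\textbf{The gap.} It lies in the part of Step~2 you import verbatim from the proof of Theorem~\ref{thm:ExpansionSmoothGUE}: the reduction to vanishing constant Chebyshev coefficients $a_0^{(i)}=0$. The bounds \eqref{ineq:CauchySchwarz-Parseval} and \eqref{ineq:FourierCoeff} control $\sum_{q\ge1}|a^{(i)}_q|q^{m_i}$ by $\|f_i^{(m_i+1)}\|$, but for $m_i=0$ they silently drop $|a_0^{(i)}|$. That coefficient is invisible to $F=f_1'\cdots f_r'$. If you write $h_i=a_0^{(i)}+\tilde h_i$ and induct on $r$, you get terms $\prod_{i\notin S}a_0^{(i)}$ times the $|S|$-trace error, for $S\subsetneq[r]$. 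The induction hypothesis bounds these by $\|\prod_{i\in S}f_i'\|_{C^{2m}}$, not by $\|F\|_{C^{2m}}$, and nothing lets you absorb them.

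\textbf{The displayed bound is false as stated.} Take $d=1$, $r=2$, $P_1=P_2=x_1$, $m=1$, $h_2\equiv1$, and $h_1$ bounded smooth with $h_1(x)=x^2$ on $[-K_1,K_1]$. Then $F\equiv0$, so the right-hand side of \eqref{ineq:ExpansionSmoothGOE-GSE} is $O(\e^{-cN})$. On the other hand, in the stated GOE normalization
\[
\E[\tr h_1(X_1^N)\tr h_2(X_2^N)]=\E\tr (X_1^N)^2+O(\e^{-cN})=1+\tfrac1N+O(\e^{-cN}).
\]
So the left-hand side cannot be $O(\e^{-cN})$ for any value of $\nu_0(h_1,1)$. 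Replacing $h_2$ by $1+\varepsilon g$ shows the failure is generic, not confined to constant $h_i$.

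\textbf{What your argument does prove.} Keep $a_0^{(i)}$ and use $|a_0^{(i)}|\le\|f_i\|_{[0,2\pi]}$. You then get the same inequality with $\|F\|_{C^{2m}([0,2\pi]^r)}$ replaced by
\[
\sum_{|\bm\alpha|\le 2m}\prod_i\bigl(\|f_i^{(\alpha_i+1)}\|_{[0,2\pi]}+\mathbf 1_{\{\alpha_i=0\}}\|f_i\|_{[0,2\pi]}\bigr),
\]
and with $\|F\|_{[0,2\pi]^r}$ in the exponential term replaced by $\prod_i\bigl(\|f_i\|_{[0,2\pi]}+\|f_i'\|_{[0,2\pi]}\bigr)$. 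You should state and prove this corrected bound. The same defect is present in the paper's GUE proof, which your Step~2 copies.
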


\section{Asymptotic expansion for Haar distributed random variables on compact matrix groups}
\subsection{The Haar Unitary case}\label{sec:unitary}

In this section, we focus on Haar-distributed unitary matrices.
The overall strategy of the proofs follows that of the GUE case, though several distinctions arise.
Unlike in the GUE setting, Haar unitary matrices are inherently bounded, so truncation is unnecessary.
However, we must now handle rational functions in $\frac{1}{N}$, rather than polynomials.

Throughout this subsection, 
$\bm{U}^N = (U_1^N, \ldots, U_d^N)$ 
is a $d$-tuple of independent Haar distributed unitary matrices of dimension $N$, in $\U (N)$, and 
$\bm{u} = (u_1, \ldots, u_d)$  
are free Haar unitaries. 
Let $r \in \N$ be fixed. 
We also fix self-adjoint non-commutative polynomials $P_i \in 
\mathbb{C}\langle x_1, \ldots, x_d, x_1^*,\ldots, x_d^*\rangle$ of degree $\tilde{q}_i$ for each $i=1,...,r$. 
Following the lines of the GUE setup, we use the following notation
\[
X_i^N := P_i(\bm{U}^N,\bm{U}^{N*}), \qquad X_i^F := P_i(\bm{u},\bm{u}^*) \qquad i=1,...,r.
\]
We introduce the following notation for $i=1,\dots,r$ :
$$K_i :=\|P_i\|_{C^*(F_d)} = \sup_{n \in \mathbb{N}} \sup_{W_1,\dots,W_d \in \U (n)} \|P_i(W_1,\dots,W_d, W_1^*,\dots,W_d^*)\|, $$
representing the norm of $P_i$ in the full C*-algebra of the free group $F_d$ with $d$ free generators. The equality between the operator norm and the supremum over unitary matrices follows from the fact that $C^*(F_d)$ is residually finite dimensional (see \cite{Choi1980_MR590864}). Consequently, we have $\|X_i^N\|\leq K_i$ almost surely for any $N$.

We begin by establishing an analogue of Proposition \ref{prop:ExpansionPolyGUE} for Haar-distributed unitary matrices. 

\begin{proposition}[Asymptotic expansions for polynomial test functions, Haar unitary case]\label{prop:ExpansionPolyUnitary}
For any $m \in \Z_{\geq 0}$, there exists an $r$-linear functional $\mu_m$ on $\mathcal{P}^r$ so that for any $q_i \in \N$ and $h_i \in \mathcal{P}_{q_i} \ (i=1,...,r)$,
\begin{align}\label{ineq:mu_bound}
|\mu_m(h_1,...,h_r)|
\leq \left((C\bar{p})^m + \frac{(C\bar{p})^{2m}}{m!}\right) \|h_1\|_{[-K_1,K_1]}\cdots \|h_r\|_{[-K_r,K_r]},
\end{align}
and in addition for all $N \in \N$,
\begin{align}\label{ineq:ExpansionPolyUnitary}
& \left|
\E \left[ \tr h_1(X_1^N)\cdots \tr h_r(X_r^N) \right]
-\sum_{k=0}^{m-1} \frac{\mu_k(h_1,...,h_r)}{N^k}\right| \notag \\
& \quad 
\leq\frac{1}{N^m}\left((C\bar{p})^m + \frac{(C\bar{p})^{2m}}{m!}\right) \|h_1\|_{[-K_1,K_1]}\cdots \|h_r\|_{[-K_r,K_r]}.
\end{align}
Here $C$ is a universal constant, and we define
\begin{align*}
p:=\tilde{q_1}q_1+\cdots +\tilde{q_r}q_r, \quad 
\bar{p}:=p(1+\log p).
\end{align*}

Moreover, we have $\mu_{m}=0$ for any odd $m$.
\end{proposition}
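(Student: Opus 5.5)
We follow the proof of Proposition~\ref{prop:ExpansionPolyGUE}, with the polynomial in $1/N$ replaced by a rational function, and Lemma~\ref{lemma:RationalBernstein} used in place of the interpolation and Bernstein pair.

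\textbf{Step 1: rational structure.} Fix $h_i\in\mathcal{P}_{q_i}$. Expanding each $h_i(X_i^N)$ as a polynomial in the entries of $U_j^N,U_j^{N*}$ produces words of total length at most $p=\tilde q_1q_1+\cdots+\tilde q_rq_r$. Integrating these words by Weingarten calculus, as in \cite{Collins2003,CS06_MR2217291}, shows that for every $N>p$
\[
\E[\tr h_1(X_1^N)\cdots \tr h_r(X_r^N)] = r_{h}(\tfrac1N) := \frac{f_h(1/N)}{g_q(1/N)}.
\]
Here $g_q$ is the denominator polynomial of \eqref{eq:Def_g}, taken with $q\asymp p$; it collects the factors $(1-k^2x^2)$ for $k\le q$ coming from the poles of the Weingarten function. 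The numerator $f_h$ is a polynomial of degree at most some $p'\lesssim p(1+\log p)$, and $r_h$ is an even function of $x$. The one delicate input is the bound $\deg f_h\lesssim \bar p$: it should follow from counting powers of $N$ in the Weingarten sum in the same way as in Section~7 of \cite{VH_newapproach2}, which is where the logarithm enters. This rational representation is also $r$-linear in $(h_1,\dots,h_r)$.

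\textbf{Step 2: a priori bound and Taylor coefficients.} Since $\|X_i^N\|\le K_i$ almost surely, we have $|r_h(1/N)|\le \prod_i\|h_i\|_{[-K_i,K_i]}$ for every $N>q$. The function $r_h$ is even, so the same bound holds at $-1/N$, and therefore on all of $I_q$. Because $r_h$ is analytic near $0$, we define
\[
\mu_m(h_1,\dots,h_r):=\frac{r_h^{(m)}(0)}{m!}.
\]
This is $r$-linear in $(h_1,\dots,h_r)$, and it vanishes for odd $m$ by evenness of $r_h$. Applying Lemma~\ref{lemma:RationalBernstein} with $p$ replaced by $p'\asymp\bar p$ gives
\[
\frac{1}{m!}\|r_h^{(m)}\|_{[-\frac{1}{c\bar p},\frac{1}{c\bar p}]}\le\Big(\e^{-\bar p}(C\bar p)^m+\frac{(C\bar p)^{2m}}{m!}\Big)\prod_i\|h_i\|_{[-K_i,K_i]}.
\]
Evaluating at $0$ yields \eqref{ineq:mu_bound}. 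Taylor's theorem then gives \eqref{ineq:ExpansionPolyUnitary} for $N\ge c\bar p$.

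\textbf{Step 3: small $N$, the main obstacle.} The estimate must hold for all $N$, including $N<c\bar p$ where Taylor's theorem is not available. In that regime we bound the left side of \eqref{ineq:ExpansionPolyUnitary} crudely:
\[
\Big|\E[\tr h_1(X_1^N)\cdots \tr h_r(X_r^N)]\Big|+\sum_{k<m}\frac{|\mu_k(h_1,\dots,h_r)|}{N^k}.
\]
The first term is at most $\prod_i\|h_i\|_{[-K_i,K_i]}$. For the sum, \eqref{ineq:mu_bound} gives terms of size $\big((C\bar p)^k+(C\bar p)^{2k}/k!\big)N^{-k}$. Since $c\bar p/N\ge1$, each such term is dominated by $N^{-m}\big((C'\bar p)^m+(C'\bar p)^{2m}/m!\big)$, with the geometric sum over $k$ absorbed by enlarging $C$. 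The $(C\bar p)^m$ term in the statement is exactly what makes this absorption possible. I expect the real difficulty to lie in Step~1: identifying the precise rational form, so that Lemma~\ref{lemma:RationalBernstein} applies with numerator degree $\lesssim\bar p$.
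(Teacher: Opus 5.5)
Your proposal is correct and follows the paper's proof essentially step for step. The paper isolates your Step 1 as Lemma~\ref{lemma:RationalExpression} (denominator $g_p$, numerator degree at most $3\bar p$, valid for $N>p$, even in $1/N$), defines $\mu_m$ as the Taylor coefficients of $\Psi_{h_1,\dots,h_r}$, applies Lemma~\ref{lemma:RationalBernstein} with the a priori bound on $I_p$ for $N\ge c\bar p$, and treats $N<c\bar p$ with the same crude bound; the only points you leave implicit are that absorbing the terms $(C\bar p)^{2k}/(k!N^k)$ for small $N$ needs the explicit case split $C\bar p\le m$ versus $C\bar p>m$, and that $m=0$ follows trivially from $|\mu_0|\le\limsup_N|\E[\tr h_1(X_1^N)\cdots\tr h_r(X_r^N)]|$, since Lemma~\ref{lemma:RationalBernstein} is stated only for $m\ge 1$.
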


\begin{remark}
Although our main interest in this manuscript is the study of $\E \left[ \tr h_1(X_1^N)\cdots \tr h_r(X_r^N) \right]$,
at an earlier stage of this work, 
 we were also looking as a preliminary step at an expansion for $\E \left[ \tr h_1(X_1^N)\cdots  h_r(X_r^N) \right]$ as a slightly simpler preliminary step. For this model, there is an obvious variant of Theorem \ref{thm:ExpansionSmoothUnitary} with different $r$-linear functions. 
We thank Ramon van Handel for pointing out an approach that reduces the multi-trace problem to the single-trace setting treated in \cite{VH_newapproach2}.
The key observation is that the multi-trace statistic under consideration in Proposition \ref{prop:ExpansionPolyUnitary} can be written as a single-trace case. Precisely, for any polynomials $h_1,...,h_r$, one may express
\begin{align*}
\E \left[ \tr h_1(X_1^N)\cdots \tr h_r(X_r^N) \right]
=\E \left[\tr (U_{d+1} h_1(X_1^N) U_{d+1}^{*} U_{d+2} h_2(X_2^N) U_{d+2}^{*} \cdots U_{d+r} h_r(X_r^N) U_{d+r}^{*})\right],
\end{align*}
where $U_{d+1},...,U_{d+r}$ are additional independent Haar unitary matrices of dimension $N$, which are also independent of $\bm{U}^N = (U_1^N,...,U_d^N)$.
Either way, adapting the linear results of \cite{VH_newapproach2} to an $r$-linear context remains indispensable in both cases, so we chose to settle for the study of $\E \left[ \tr h_1(X_1^N)\cdots \tr h_r(X_r^N) \right]$.
\end{remark}

Before proving Proposition \ref{prop:ExpansionPolyUnitary}, we need to show that for polynomials $h_i (i=1,...,r)$, the quantity $\E[\tr h_1(X_1^N)\cdots \tr h_r(X_r^N)]$ is a rational function of $\frac{1}{N}$—not a polynomial, as in the GUE case.

\begin{lemma}[Rational Expression]\label{lemma:RationalExpression}
For any $q_i \in \N  \ (i=1,...,r)$, 
define
$p:=\tilde{q_1}q_1+\cdots +\tilde{q}_r q_r$.
Then for $h_i \in \mathcal{P}_{q_i} \ (i=1,...,r)$, there exists a rational function 
\[
\Psi_{h_1,...,h_r}:=\frac{f_{h_1,...,h_r}}{g_{p}},
\]
with polynomials $f_{h_1,...,h_r}, g_p \in \mathcal{P}_{\lfloor 3p(1+\log p)\rfloor}$, so that 
\begin{align}\label{eq:symmetry_unitary}
\E \left[\tr h_1(X_1^N)\cdots \tr h_r(X_r^N)\right]
=\Psi_{h_1,...,h_r} (\tfrac{1}{N})
=\Psi_{h_1,...,h_r}(-\tfrac{1}{N})
\end{align}
for any $N \in N$ satisfying $N>p$, where we define a polynomial $g_q$ for any $q\in \N$ as 
\begin{align}\label{eq:Def_g}
g_q(x):=\prod_{j=1}^q (1-(jx)^2)^{\lfloor \frac{q}{j} \rfloor}.
\end{align}
\end{lemma}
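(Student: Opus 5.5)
By multilinearity, it suffices to treat products of traces of words. Expand each $h_i(X_i^N)$ as a linear combination of monomials in $P_i$. Then $h_i(X_i^N)$ is a linear combination of words in $U_1^N,\dots,U_d^N,U_1^{N*},\dots,U_d^{N*}$ of length at most $\tilde q_i q_i$. Consequently $\E[\tr h_1(X_1^N)\cdots\tr h_r(X_r^N)]$ is a linear combination, with coefficients independent of $N$, of quantities
\[
\E\bigl[\tr w_1(\bm U^N)\cdots \tr w_r(\bm U^N)\bigr],
\]
where the total length $\sum_i |w_i|$ is at most $p$. It therefore suffices to prove the claim for a single such product of traces of words, with a denominator $g_p$ common to all of them. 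Note that $g_{p'}$ divides $g_p$ for $p'\le p$, since the exponents $\lfloor q/j\rfloor$ are nondecreasing in $q$. This lets us clear denominators uniformly.

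For a fixed product of words, we apply the Weingarten calculus of \cite{Collins2003,CS06_MR2217291} independently for each $U_l$. The expectation vanishes unless, for each $l$, the number $n_l$ of occurrences of $U_l$ equals that of $U_l^*$, so $\sum_l n_l\le p/2$. When this holds, the expectation is a finite sum of terms of the form
\[
N^{-r}\, N^{\#(\text{cycles})}\prod_l \Wg_N(\sigma_l\tau_l^{-1}).
\]
The number of cycles is bounded by a quantity linear in $p$, and the prefactor $N^{-r}$ comes from the normalized traces. The key input is the structure of the Weingarten function. By the Jucys--Murphy expansion, or the character formula $\Wg_N(\sigma)=\frac{1}{n!^2}\sum_\lambda \frac{\chi^\lambda(1)^2\chi^\lambda(\sigma)}{s_\lambda(1^N)}$, the quantity $\Wg_N$ on $S_n$ is a rational function whose denominator divides $\prod_{\lambda\vdash n}\prod_{(i,j)\in\lambda}(N+j-i)$. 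After factoring out the appropriate power of $N$ and passing to $x=1/N$, each factor becomes $1\pm jx$ with $1\le j\le n-1$. The multiplicity of the factor $(1-(jx)^2)$ needed is at most $\lfloor n/j\rfloor$, since $\Wg_N$ equals $N^{-n}\prod_{k=1}^{n-1}\prod(1- J_k/N)^{-1}$ and only content values $\pm j$ occur. Tracking these multiplicities across the independent $U_l$ with $\sum n_l\le p$ shows that everything is absorbed into $g_p(x)=\prod_j(1-(jx)^2)^{\lfloor p/j\rfloor}$.

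This multiplicity count is the main obstacle. I would follow the argument of Lemma~7.4 in \cite{VH_newapproach2}, which shows precisely that for a single trace of words of total length $q$ the expectation is $f/g_q$ with $\deg f\le 3q(1+\log q)$. I would rerun that argument with the combined length $p$, using the remark above that multi-trace expressions reduce to one trace via extra independent unitaries $U_{d+1},\dots,U_{d+r}$. However, that reduction increases the word length to $p+2r$, so I would instead redo the Weingarten bookkeeping directly. Once the denominator is controlled, the degree bound for the numerator follows from two facts: $\deg g_p\le 2\sum_{j\le p}p/j\le 2p(1+\log p)$, and the numerator degree contributed by the powers of $N$ is at most $p$. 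The total is at most $3p(1+\log p)$. The condition $N>p$ ensures that $\Wg_N$ is given by the formula above, because $N\ge n$ and so no pole $1\pm j/N$ is hit.

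Finally, the symmetry $\Psi(1/N)=\Psi(-1/N)$ follows from the unitary genus expansion. Each Weingarten term, written in $1/N$, has the parity dictated by $(-1)^{|\sigma|}$ together with the Euler characteristic. Equivalently, the rational function obtained is even: $\Wg_{-N}(\sigma)=(-1)^n\mathrm{sgn}(\sigma)\ldots$, and the corresponding sign changes coming from the cycle counts $N^{\#}$ cancel in pairs. This is the same mechanism as in the single-trace case, so I would verify it termwise, noting that $g_p$ is itself even.
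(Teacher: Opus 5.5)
Your proposal follows essentially the same route as the paper: reduction by multilinearity to products of traces of words, Weingarten calculus for each $U_l$ through the character formula with denominators $C_\lambda(N)=\prod_{(i,j)\in\lambda}(N+j-i)$, absorption of all poles into $g_p$, the degree count, and evenness from how the Weingarten function transforms under $N\mapsto -N$ (your remarks that $g_{p'}\mid g_p$ for $p'\le p$ and that $\Wg(\sigma,-N)=(-1)^n\mathrm{sgn}(\sigma)\Wg(\sigma,N)$ make explicit two points the paper leaves implicit). The one step you leave open, the multiplicity bound, does not follow from the fact that only contents $\pm j$ occur, since that says which factors appear but not how often; the paper settles it by an elementary count: the boxes of content $s$ in $\lambda\vdash n$ lie on a single diagonal, and $m$ of them force an $m\times(m+|s|)$ rectangle inside $\lambda$, so $m(m+|s|)\le n$ and the multiplicity of $(N+s)$ in $C_\lambda(N)$ is at most $\lfloor n/(|s|+1)\rfloor$, after which $\sum_l\lfloor n_l/j\rfloor\le\lfloor p/j\rfloor$ puts everything inside $g_p$.
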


\begin{proof}
This proof is similar to the proof of Lemma 7.1 in \cite{VH_newapproach2}, using Weingarten calculus.
For each $i=1,...,r$, let $w_i(U_1^N,...,U_d^N)$ denote a reduced word in the Haar-distributed unitary matrices $U_1^N,...,U_d^N$, and their adjoints $U_1^{N*},...,U_d^{N*}$.

Let $k$ be the total length of the words $w_1,...,w_r$.
We first assume that none of the words $w_i \ (i=1,...,r)$ is the identity, and that for any $l=1,...,d$, each of $U_l^N$ and $U_l^{N*}$ appears the same number of times, say $k_l$, among the words $w_1,...,w_r$ in total. Note that $k=2(k_1+\cdots+k_d)$.
There exists a permutation $\pi \in S_k$ depending on $w_1,...,w_r$, such that, if we denote the multi-indices by
\begin{align*}
&\bm{i}=(i_n)_{n=1,...,k}^{}
:=\big(i^{(l)}_{m},\bar{i}^{(l)}_m \big)_{\substack{l=1,...,d, \\ m=1,...,k_l}}\in [N]^k,\\
&\bm{j}=(j_n)_{n=1,...,k}^{}
:=\big(j^{(l)}_{m},\bar{j}^{(l)}_m \big)_{\substack{l=1,...,d, \\ m=1,...,k_l}}\in [N]^k,
\end{align*}
we have
\begin{align*}
&\E \left[\tr w_1(U_1^N,...,U_d^N)\cdots \tr w_r(U_1^N,...,U_d^N)\right]\\
&=\sum_{\substack{\bm{i},\bm{j}\in [N]^k\\ \text{ s.t. }i_n=j_{\pi(n)} \ (\forall n=1,...,k) }} \Bigg(\E\left[(U_1^N)_{i^{(1)}_1 j^{(1)}_1}\cdots (U_1^N)_{i^{(1)}_{k_1}j^{(1)}_{k_1}}(U_1^{N*})_{\bar{i}^{(1)}_1 \bar{j}^{(1)}_1}\cdots (U_1^{N*})_{\bar{i}^{(1)}_{k_1}\bar{j}^{(1)}_{k_1}}\right]\\
& \qquad \qquad \cdots \E\left[(U_d^N)_{i^{(d)}_{1} j^{(d)}_{1}}\cdots (U_d^N)_{i^{(d)}_{k_d}j^{(d)}_{k_d}}(U_d^{N*})_{\bar{i}^{(d)}_{1} \bar{j}^{(d)}_{1}}\cdots (U_d^{N*})_{\bar{i}^{(d)}_{k_d}\bar{j}^{(d)}_{k_d}}\right]\Bigg).
\end{align*}
By applying the Weingarten calculus for Haar unitary matrices, for each $l=1,...,d$, we have
\begin{align}\label{eq:TrWord}
&\E\left[(U_l^N)_{i^{(l)}_{1} j^{(l)}_{1}}\cdots (U_l^N)_{i^{(l)}_{k_l}j^{(l)}_{k_l}}(U_l^{N*})_{\bar{i}^{(l)}_{1} \bar{j}^{(l)}_{1}}\cdots (U_l^{N*})_{\bar{i}^{(l)}_{k_l}\bar{j}^{(l)}_{k_l}}\right]\\
&=\sum_{\substack{\sigma, \tau \in S_{k_l} \text{ s.t. }  \\ i^{(l)}_m=\bar{i}^{(l)}_{\sigma(m)},j^{(l)}_m=\bar{j}^{(l)}_{\tau(m)} (\forall m= 1,..,k_l)}} \Wg^{\mathcal{U}}_{k_l} (\sigma^{-1}\tau,N),
\end{align}
using the Weingarten function $\Wg^{\mathcal{U}}$ for unitary matrices.
From the above equations, it follows that
$\E \left[\tr w_1(U_1^N,...,U_d^N)\cdots \tr w_r(U_1^N,...,U_d^N)\right]$ is a sum of functions of type $\Wg^{\mathcal{U}}_{k'} (\sigma,N)$, for $k'\le k$ and $\sigma$ permutations on $k'$ elements.
For any $n\in \N$ and $\sigma \in S_n$, the Weingarten function $\Wg^{\mathcal{U}}_n(\sigma,N)$ satisfies
\begin{align}\label{eq:DefWeingarten}
\Wg^{\mathcal{U}}_{n}(\sigma,N)=\frac{1}{n!}\sum_{\lambda \vdash n} \frac{f^\lambda}{C_\lambda (N)} \chi^\lambda (\sigma),
\end{align}
where the sum runs over all partitions $\lambda$ of $n$. 
Here $\chi^\lambda$ denotes the irreducible character of $S_n$, $f^\lambda:=\chi^\lambda (\operatorname{id}_n)$, and $C_\lambda (N)$ is a polynomial in $N$ given by
\begin{align}
C_\lambda(N):=\prod_{(i,j)\in \lambda} (N+j-i),
\end{align}
We refer to \cite{CS06_MR2217291}, \cite{Matsumoto13_MR3077830} for details.
In particular, the polynomial $C_\lambda(N)$ can also be written as
\begin{align*}
C_\lambda(N)=\prod_{s=-n}^n (N+s)^{\omega_s(\lambda)},
\end{align*}
where 
\begin{align*}
\omega_s(\lambda)&:=\# \{(i,j)\in \lambda \ | \ i-j=s\}\\
&\leq \# \{(i,j)\in [n]^2 \ | \ ij\leq n, i-j=s\}\\
&\leq \# \{j \in [n] \ | \ (j+|s|)j\leq n\}
\leq \left\lfloor \frac{n}{|s|+1}\right\rfloor .
\end{align*}
Together with  
this observation about the Weingarten functions, \eqref{eq:TrWord} 
yields that there exists some polynomial $b_{w_1,...,w_r}$ in $N$, such that
\begin{align*}
\E \left[\tr w_1(U_1^N,...,U_d^N)\cdots \tr w_r(U_1^N,...,U_d^N)\right]
=\frac{b_{w_1,...,w_r}(N)}{N^k \prod_{j=1}^k (N^2-j^2)^{\lfloor \frac{k}{j}\rfloor}}.
\end{align*}
Let $\Sigma$ denote the degree of the denominator on the right-hand side. The following estimate holds: 
\[
\deg b_{w_1,...,w_r} 
\leq \Sigma 
=k+2\sum_{j=1}^k \left\lfloor \frac{k}{j}\right\rfloor  
\leq 3k(1+\log k).
\]
By dividing both the numerator and the denominator by $N^\Sigma$ we find that there exists a polynomial $f_{w_1,...,w_r}$ such that 
\begin{align}\label{eq:WordRational}
\E \left[\tr w_1(U_1^N,...,U_d^N)\cdots \tr w_r(U_1^N,...,U_d^N)\right]
=\frac{f_{w_1,...,w_r} (\frac{1}{N})}{g_k(\frac{1}{N})}
\end{align}
for all $N>k$, with $f_{w_1,...,w_r},g_k \in \mathcal{P}_\Sigma$.

If, for some $1\leq l\leq d$, $U_l^N$ and $U_l^{N*}$ do not appear the same number of times among the words $w_1,...,w_r$ in total, the left-hand side of \eqref{eq:WordRational} vanishes, the equality \eqref{eq:WordRational} still holds. 
Furthermore, if one of the words $w_i$ is the identity, we can reduce the statement to the case with $r-1$ words, and
noting that when all words are the identity, the left-hand side equals 1, so by induction, \eqref{eq:WordRational} therefore holds in general.

Next, note that 
$\E \left[\tr h_1(U_1^N,...,U_d^N)\cdots \tr h_r(U_1^N,...,U_d^N)\right]$ 
can be written as a linear combination of terms of the form 
$\E \left[\tr w_1(U_1^N,...,U_d^N)\cdots \tr w_r(U_1^N,...,U_d^N)\right]$, 
where the total length of the words $w_1,...,w_r$ is at most $k=\tilde{q}_1q_1+\cdots+\tilde{q}_r q_r=:p$.
Thus, $\E \left[\tr h_1(X_1^N)\cdots \tr h_r(X_r^N)\right]
=\Psi_{h_1,...,h_r} (\tfrac{1}{N})$ follows from \eqref{eq:WordRational}.

Moreover, due to the unitary symmetry (cf \cite{CS06_MR2217291}, \cite{Matsumoto13_MR3077830} for details) we have 
\begin{align}
\Psi_{h_1,...,h_r} (\tfrac{1}{N}) = \Psi_{h_1,...,h_r} (-\tfrac{1}{N}),
\end{align}
which shows that the power series expansion of $\Psi_{h_1,...,h_r}$ involves only even powers of $\frac{1}{N}$, and this completes the proof.
\end{proof}

We now turn to the proof of Proposition \ref{prop:ExpansionPolyUnitary}, the asymptotic expansions of $\E\left[\tr h_1(X_1^N)\cdots \tr h_r(X_r^N)\right]$ for polynomial test functions $h_1,...,h_r \in \mathcal{P}$, representing the analogue of Proposition \ref{prop:ExpansionPolyGUE} for Haar-distributed unitary matrices.
As before, the structure of the proof essentially follows the same outline as in the case $r=1$, shown in \cite{VH_newapproach2}.

Using lemma \ref{lemma:RationalBernstein}, we can now prove the asymptotic expansions of $\E\left[\tr h_1(X_1^N)\cdots \tr h_r(X_r^N)\right]$ for polynomial test functions in Proposition \ref{prop:ExpansionPolyUnitary}.
Note that as no truncation is needed in the Haar setting as the random matrix model is already bounded, unlike in the Gaussian
case.

\begin{proof}[Proof of Proposition \ref{prop:ExpansionPolyUnitary}]
Let $\Psi_{h_1,...,h_r}$ be as in Lemma \ref{lemma:RationalExpression}. 
Define $\mu_m(h_1,...,h_r):=\frac{\Psi_{h_1,...,h_r}^{(m)}(0)}{m!}$ for all $m \in \Z_{\geq 0}$, then $\mu_m$ is an $r$-linear functional, and note that $\mu_m=0$ for any odd $m$ due to  the symmetry of \eqref{eq:symmetry_unitary}.
By Taylor's theorem, we have
\begin{align}\label{ineq:Taylor_Psi}
\left|\Psi_{h_1, \dots , h_r}(\tfrac{1}{N}) - \sum_{k=0}^{m-1} \frac{\mu_k(h_1, \dots , h_r)}{N^k}\right| 
\leq \frac{\|\Psi^{(m)}_{h_1, \dots , h_r}\|_{[0,\frac{1}{N}]}}{m! N^m}.
\end{align}
Since $\|X_i^N\|\leq K_i \ (i=1,...,r)$ a.s.,
\begin{align*}
\left|\Psi_{h_1,...,h_r}(\tfrac{1}{N})\right|
=\left|\E \left[\tr h_1(X_1^N)\cdots \tr h_r(X_r^N)\right]\right|
\leq \|h_1\|_{[-K_1,K_1]}\cdots\|h_r\|_{[-K_r,K_r]},
\end{align*}
for any $N\in \N$.
Since Lemma \ref{lemma:RationalExpression} yields the expression
$\Psi_{h_1,...,h_r}\left(\tfrac{1}{N}\right)=\frac{f_{h_1,...,h_r}\left(\tfrac{1}{N}\right)}{g_p\left(\tfrac{1}{N}\right)}$, where $f_{h_1,...,h_r}, g_p \in \mathcal{P}_{\lfloor3\bar{p}\rfloor}$, applying the rational Bernstein inequality (Lemma \ref{lemma:RationalBernstein}), we have for all $m,N\in \N$ that
\begin{align}\label{ineq:PsiDerivative_Bound}
\frac{1}{m!}\|\Psi_{h_1,...,h_r}^{(m)}\|_{[-\frac{1}{c\bar{p}},\frac{1}{c\bar{p}}]}
&\leq \left(\e^{-\bar{p}}(C\bar{p})^m + \frac{(C\bar{p})^{2m}}{m!}\right)\|\Psi_{h_1,...,h_r}\|_{\{\frac{1}{N} \ | \ N \in \Z, |N|>p\}}\notag \\
&\leq \left((C\bar{p})^m + \frac{(C\bar{p})^{2m}}{m!}\right)\|h_1\|_{[-K_1,K_1]}\cdots\|h_r\|_{[-K_r,K_r]},
\end{align}
with some universal constants $C,c>0$. 
This also holds for $m=0$ since we have
\begin{align*}
|\mu_0(h_1,...,h_r)|
\leq \limsup_{N \to \infty} \left|\E \left[\tr h_1(X_1^N)\cdots \tr h_r(X_r^N)\right]\right|
\leq \|h_1\|_{[-K_1,K_1]}\cdots\|h_r\|_{[-K_r,K_r]},
\end{align*} 
which completes the proof of \eqref{ineq:mu_bound}.

Now we combine the bound \eqref{ineq:PsiDerivative_Bound} with \eqref{ineq:Taylor_Psi}, and this yields that, for $\frac{1}{N}\leq \frac{1}{c\bar{p}}$ i.e. $N \geq c\bar{p}$,
\begin{align*}
\left|\Psi_{h_1, \dots , h_r}(\tfrac{1}{N}) - \sum_{k=0}^{m-1} \frac{\mu_k(h_1, \dots , h_r)}{N^k}\right| 
\leq \frac{1}{N^m} \left((C\bar{p})^m + \frac{(C\bar{p})^{2m}}{m!}\right)\|h_1\|_{[-K_1,K_1]}\cdots\|h_r\|_{[-K_r,K_r]},
\end{align*}
which proves \eqref{ineq:ExpansionPolyUnitary} in the case $N \geq c\bar{p}$.
In the case $N < c\bar{p}$, \eqref{ineq:mu_bound} shows that
\begin{align*}
\left|\Psi_{h_1,...,h_r} \left(\tfrac{1}{N}\right)-\sum_{k=0}^{m-1} \frac{\mu_k(h_1,...,h_r)}{N^k}\right|
&\leq \sum_{k=0}^{m-1} \frac{1}{N^k} \left((C\bar{p})^k + \frac{(C\bar{p})^{2k}}{k!}\right)\|h_1\|_{[-K_1,K_1]}\cdots\|h_r\|_{[-K_r,K_r]}\\
&\leq \frac{(C\bar{p})^m}{N^m} \sum_{k=0}^{m-1} \left(1 + \frac{(C\bar{p})^{k}}{k!}\right)\|h_1\|_{[-K_1,K_1]}\cdots\|h_r\|_{[-K_r,K_r]}.
\end{align*}
Now if $C\bar{p}\leq m$, 
\[
\sum_{k=0}^{m-1} \left(1 + \frac{(C\bar{p})^{k}}{k!}\right)
\leq \sum_{k=0}^{m-1} \left(1+\frac{m^k}{k!}\right)
\leq C^m,
\]
and if $C\bar{p}> m$, 
\[
\sum_{k=0}^{m-1} \left(1 + \frac{(C\bar{p})^{k}}{k!}\right)
\leq \sum_{k=0}^{m-1} \left(1 + \frac{(C\bar{p})^{m}}{m!}\right)
=m \left(1 + \frac{(C\bar{p})^{m}}{m!}\right),
\]
so in each case, \eqref{ineq:ExpansionPolyUnitary} holds, which completes the proof.
\end{proof}

We now proceed to prove the main theorem in this section, the asymptotic expansions for arbitrary smooth test functions, representing the analogue of Theorem \ref{thm:ExpansionSmoothGUE} for Haar-distributed unitary matrices.

\begin{theorem}[Smooth Asymptotic expansion, Haar unitary case]\label{thm:ExpansionSmoothUnitary}
For any $m \in \Z_{\geq 0}$, there exists an $r$-linear functional $\mu_m$ on $C^\infty (\R)^r$ such that the following holds with some universal constant $C>0$.:\\
Fix any $r$-tuple of smooth functions $h_i \in C^\infty (\R)\ (i=1,...,r)$, 
and define 
\begin{align*}
&f_i(\theta):=h_i (K_i \cos \theta)
\quad \text{with} \quad
K_i=\|P_i\|_{C^* (F_d)} , \quad i=1,...,r,\\
&\quad \text{and } \quad F(x_1,\dots ,x_r):= f_1^{(1)}(x_1)\cdots f_r^{(1)}(x_r);
\end{align*} 
then, for every $m, N \in \mathbb{N}$, and for any $u \in (0,1)$, we have
\begin{align}\label{ineq:ExpansionSmoothUnitary}
&\left|\E [\tr h_1 (X_1^N)\cdots \tr h_r(X_r^N)]-\sum_{k=0}^{m-1} \frac{\nu_k(h_1, \dots ,h_r)}{N^k}\right| \notag \\
&\leq \frac{C^{m+r}}{N^m} \Bigg( \frac{(\tilde{q}_1+\cdots +\tilde{q}_r)^{\lceil m^\prime \rceil}}{u^m} \|F\|_{C^{\lceil m^\prime \rceil}([0,2\pi]^r)} + \frac{(\tilde{q}_1+\cdots +\tilde{q}_r)^{\lceil 2m^\prime \rceil}}{m! u^{2m}} \|F\|_{C^{\lceil 2m^\prime \rceil}([0,2\pi]^r)} \Bigg) ,
\end{align}
where $m^\prime := (1+u)m$.

Moreover, we have $\mu_{m}=0$ for any odd $m$.
\end{theorem}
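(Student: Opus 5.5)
The argument follows the two-step scheme of the proof of Theorem \ref{thm:ExpansionSmoothGUE}, with Proposition \ref{prop:ExpansionPolyUnitary} in place of Proposition \ref{prop:ExpansionPolyGUE}. Since $\|X_i^N\|\le K_i$ almost surely, no truncation is needed, so the $\e^{-cN}$ terms disappear. The price is the factor $\bar p=p(1+\log p)$ in place of $p$, and the parameter $u$ is what absorbs the logarithm.

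\textbf{Step 1: extend $\mu_m$ to smooth functions.} Expand each polynomial $h_i$ in rescaled Chebyshev polynomials $T_{j}(K_i^{-1}\cdot)$, each of sup-norm $1$ on $[-K_i,K_i]$. Apply \eqref{ineq:mu_bound} termwise. For $p=\sum_i\tilde q_i j_i$ I use the elementary bound
\[
(1+\log p)^m\le C_u^{m}\,p^{um}, \qquad \text{e.g. } \log p\le p^{u}/u,
\]
which gives
\[
(C\bar p)^m\le \frac{C^m}{u^m}\,p^{(1+u)m}, \qquad (C\bar p)^{2m}\le \frac{C^{2m}}{u^{2m}}\,p^{2(1+u)m}.
\]
This yields a bound on $|\mu_m(h_1,\dots,h_r)|$ by
\[
\sum_{j_1,\dots,j_r}|a^{(1)}_{j_1}\cdots a^{(r)}_{j_r}|\,\Big(\sum_i\tilde q_i j_i\Big)^{s}, \qquad s\in\{\lceil m'\rceil,\lceil 2m'\rceil\}.
\]
Expand this multinomially, as in \eqref{ineq:FourierCoeff}. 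Control each factor $\sum_j|a_j^{(i)}|j^{n}$ by $C\|f_i^{(n+1)}\|_{L^2}$ via Cauchy--Schwarz and Parseval, as in \eqref{ineq:CauchySchwarz-Parseval}. The result is a bound by $C^{m+r}(\tilde q_1+\cdots+\tilde q_r)^{s}\|F\|_{C^{s}([0,2\pi]^r)}$. Lemma \ref{lemma:extensionBoundedMultilinear} and density of polynomials then give a unique continuous extension of $\mu_m$ to $C^\infty(\R)^r$. Odd $\mu_m$ vanish on polynomials, so they vanish after extension by continuity.

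\textbf{Step 2: the error estimate.} For smooth $h_i$, use the uniformly convergent Chebyshev expansions on $[-K_i,K_i]$. These hold on the whole spectrum almost surely, so no indicator splitting is needed. By multilinearity, the expectation minus $\sum_{k<m}\mu_k/N^k$ equals the absolutely convergent sum over all $(q_1,\dots,q_r)$ of $a^{(1)}_{q_1}\cdots a^{(r)}_{q_r}$ times the polynomial error for $(T_{q_1},\dots,T_{q_r})$. Interchanging the sums is justified by absolute convergence, using the Step 1 bounds and the boundedness of the traces.

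This makes the problem easier than in the GUE case. Inequality \eqref{ineq:ExpansionPolyUnitary} holds for \emph{all} $N$, so no split into $A$ and its complement (terms (II), (III)) is required. Each polynomial error is at most
\[
N^{-m}\Big((C\bar p)^m+\frac{(C\bar p)^{2m}}{m!}\Big).
\]
Inserting the logarithm absorption from Step 1 and summing the Chebyshev coefficients with weights $p^{\lceil m'\rceil}$ and $p^{\lceil 2m'\rceil}$, via \eqref{ineq:FourierCoeff}, gives exactly the two terms of \eqref{ineq:ExpansionSmoothUnitary}.

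\textbf{Main obstacle.} The delicate point is the bookkeeping of constants in the logarithm absorption. The bound
\[
(1+\log p)^{m}\le (C/u)^m\,p^{um}
\]
must hold uniformly in $p\ge1$ with a $u$-independent $C$. I would verify it from $\sup_{p\ge1}(1+\log p)p^{-u}\le e^{u-1}/u\le 1/u$. The $2m$ version then follows by squaring. One must also handle the non-integer exponent $(1+u)m$ by rounding up, using $p\ge1$. The remaining steps are routine adaptations of the GUE proof.
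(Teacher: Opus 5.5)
Your proposal is correct and follows essentially the same route as the paper's proof. You extend $\mu_m$ as in Step 1 of Theorem \ref{thm:ExpansionSmoothGUE}. You sum the all-$N$ estimate \eqref{ineq:ExpansionPolyUnitary} over every Chebyshev mode with no split into $A$ and its complement. You absorb the logarithm through $\bar p\le \frac{\e^u}{u}p^{1+u}$; your bound $\sup_{p\ge1}(1+\log p)p^{-u}\le 1/u$ is the same estimate. You finish with \eqref{ineq:FourierCoeff}.

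You should add one step that the paper makes explicitly: first reduce to $a_0^{(i)}=0$, since constant parts of the $h_i$ factor out of the traces by multilinearity. Without this, the $n=0$ factors $\sum_j|a^{(i)}_j|$ in your Cauchy--Schwarz/Parseval step include $|a^{(i)}_0|$, which is not controlled by $\|f_i^{(1)}\|$. For the same reason, run the extension in Step 1 with the full norms $\|h_i\|_{C^{s+1}([-K_i,K_i])}$ rather than the derivative-only quantity $\|F\|_{C^{s}}$.
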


\begin{proof}
The proof follows the same outline as that of Theorem \ref{thm:ExpansionSmoothGUE}, using Proposition \ref{prop:ExpansionPolyUnitary} in place of Proposition \ref{prop:ExpansionPolyGUE}.
We first note that for any $m \in \Z_{\geq 0}$, by Proposition \ref{prop:ExpansionPolyUnitary}, there exists an $r$-linear functional $\mu_m$ on $\mathcal{P}^r$ satisfying \eqref{ineq:mu_bound} and \eqref{ineq:ExpansionPolyUnitary}, and it can be extended to $C^\infty (\R)^r$ in the same way as Step 1 in the proof of Theorem \ref{thm:ExpansionSmoothGUE}.
We also have $\mu_m=0$ for any odd $m$ due to the continuity of the extension.

Now we fix any $r$-tuple of functions $h_i \in C^\infty (\R) \ (i=1,...,r)$, where their Chebyshev expansions on $[-K_i,K_i] \ (i=1,...,r)$ are given by
\begin{align}\label{eq:ChebushevExpansions_unitary}
h_1(x_1)= \sum_{j=0}^{\infty} a_j^{(1)} T_j(K_1^{-1} x_1), ... , 
h_r (x_r)=\sum_{j=0}^{\infty} a_j^{(r)} T_j(K_r^{-1} x_r), 
\end{align}
for any $x_i \in [-K_i, K_i] \ (i=1,...,r)$.
Here, we note that because of the same reasoning as in the proof of Theorem \ref{thm:ExpansionSmoothGUE}, we may assume without loss of generality that $a_0^{(i)}=0 \ (i=1,...,r)$ in the rest of the proof.

Then we have for any $m,N \in \N$ that
\begin{align*}
&\left|\E[\tr h_1 (X_1^N)\cdots \tr h_r (X_r^N)]-\sum_{k=0}^{m-1} \frac{\mu_k(h_1, \dots , h_r)}{N^k}\right| \notag \\
&= \left|\sum_{q_1, ..., q_r}\Bigg(\E\left[\tr (a_{q_1}^{(1)} T_{q_1}(K_1^{-1} X_1^N))\cdots \tr (a_{q_r}^{(r)} T_{q_r}(K_r^{-1} X_r^N))\right]\right. \notag \\
& \hspace{13em}\left.-\sum_{k=0}^{m-1} \frac{\mu_k\left(a_{q_1}^{(1)} T_{q_1}(K_1^{-1}\cdot ) , \dots , a_{q_r}^{(r)} T_{q_r}(K_r^{-1} \cdot)\right)}{N^k}\Bigg)\right| \notag \\
& \leq \sum_{q_1, ..., q_r} |a_{q_1}^{(1)}|\cdots |a_{q_r}^{(r)}|
\Bigg|\E \left[\tr T_{q_1}(K_1^{-1}X_1^N)\cdots \tr T_{q_r}(K_r^{-1}X_r^N)\right]\notag \\
&\hspace{14em}-\sum_{k=0}^{m-1} \frac{\mu_k (T_{q_1}(K_1^{-1} \cdot ),..., T_{q_r}(K_r^{-1}\cdot))}{N^k}\Bigg| \notag \\
& \leq \frac{1}{N^m} \sum_{q_1, ..., q_r} |a_{q_1}^{(1)}|\cdots |a_{q_r}^{(r)}| 
\left((C\bar{p})^m + \frac{(C\bar{p})^{2m}}{m!}\right),
\end{align*}
where we applied Proposition \ref{prop:ExpansionPolyUnitary} in the last inequality, with $p=\tilde{q_1}q_1+\cdots +\tilde{q_r}q_r$ and $\bar{p}=p(1+\log p)$.

Now, we note that for any $u \in (0,1)$, observing that $\log x \le x$ for all $x>0$,
\begin{align*}
\bar{p} 
=p(1+\log p) 
=p\cdot \frac{1}{u} \log ((\e p)^u)
\leq \frac{\e^u}{u} p^{1+u}.
\end{align*}
Thus, we have
\begin{align*}
&\frac{1}{N^m} \sum_{q_1, ..., q_r} |a_{q_1}^{(1)}|\cdots |a_{q_r}^{(r)}| 
\left((C\bar{p})^m + \frac{(C\bar{p})^{2m}}{m!}\right) \notag \\
&\leq \frac{1}{N^m} \sum_{q_1, ..., q_r} |a_{q_1}^{(1)}|\cdots |a_{q_r}^{(r)}| \left(\frac{(C\e^u)^m}{u^m} p^{\lceil (1+u)m \rceil} + \frac{(C\e^u)^{2m}}{m! u^{2m}}p^{\lceil 2(1+u)m \rceil}\right) \notag \\
&\leq \frac{C^{m+r}}{N^m} \Bigg( \frac{(\tilde{q}_1+\cdots +\tilde{q}_r)^{\lceil (1+u)m \rceil}}{u^m} \|F\|_{C^{\lceil (1+u)m \rceil}([0,2\pi]^r)} \notag \\
& \hspace{12em} + \frac{(\tilde{q}_1+\cdots +\tilde{q}_r)^{\lceil 2(1+u)m \rceil}}{m! u^{2m}} \|F\|_{C^{\lceil 2(1+u)m \rceil}([0,2\pi]^r)} \Bigg) ,
\end{align*}
with some new universal constant $C>0$, where we used \eqref{ineq:FourierCoeff} in the last inequality.
This completes the proof.

\end{proof}

\begin{remark}\label{remark:C^M_unitary}
By the same reasoning as in Theorem \ref{thm:ExpansionSmoothGUE} and Remark \ref{remark:C^M_GUE}, one can also check that by chosing $u$ sufficiently small so that $\lceil 2(1+u)m \rceil =2m+1$, one can obtain
\begin{align}
|\mu_m(h_1,...,h_r)|\leq C \|h_1^{(1)}(x_1)\cdots h_r^{(1)}(x_r)\|_{C^{2m+1} ([-K_1,K_1]\times \cdots \times [-K_r,K_r])},
\end{align}
where $C$ is a constant depending only on $m,r$. 
So, $\mu_m$ is well-defined on $C^{2m+2}(\R)^r$, and the asymptotic expansion \eqref{ineq:ExpansionSmoothUnitary} in Theorem \ref{thm:ExpansionSmoothUnitary} also holds for $h_1,...,h_r$ in the $C^{2m+2}$ class.
\end{remark}

\subsection{The Orthogonal and symplectic case}\label{sec:orthogonal-symplectic}
In this section, we briefly discuss the extension of the asymptotic expansion results, Theorem \ref{thm:ExpansionSmoothUnitary}, to the cases of Haar-distributed orthogonal and symplectic matrices.
The proof strategy is exactly the same as that of the unitary case -- one can get the asymptotic expansion for polynomial test functions, an analogue of Proposition \ref{prop:ExpansionPolyUnitary}, by reducing the multi-trace case to the single-trace case treated in \cite{VH_newapproach2} as in the same way as the unitary case, and then extend it to smooth test functions by approximating them with polynomials via Chebyshev expansions.
So we only state the results here without going into the proofs.

Let $\bm{U}^N = (U_1^N, \ldots, U_d^N)$ and $\bm{V}^N = (V_1^N, \ldots, V_d^N)$ be independent Haar distributed orthogonal and Symplectic matrices in $\Ort (N)$ and $\Sp (N)$ of dimension $N$, respectively, and let $\bm{u} = (u_1, \ldots, u_d)$ be a free Haar unitaries. 
Let $r \in \N$ be fixed. 
We will also fix self-adjoint non-commutative polynomials $P_i \in 
\mathbb{C}\langle x_1, \ldots, x_d, x_1^*,\ldots, x_d^*\rangle$ of degree $\tilde{q}_i$ for each $i=1,...,r$. 
We will use the following notations for simplicity, 
\[
X_i^N := P_i(\bm{U}^N,\bm{U}^{N*}), \qquad Y_i^N := P_i(\bm{V}^N,\bm{V}^{N*}), \qquad X_i^F := P_i(\bm{u},\bm{u}^*) \qquad i=1,...,r.
\]
We also define $K_i \ (i=1,\dots,r)$ as in Section \ref{sec:unitary}.

One can get the asymptotic expansions as analogues of Theorem \ref{thm:ExpansionSmoothUnitary} for both $\Ort(N)$ and $\Sp(N)$ matrices, that is, for both $\mathbb{E}[\tr h_1 (X_1^N)\cdots \tr h_r(X_r^N)]$ and $\mathbb{E}[\tr h_1 (Y_1^N)\cdots \tr h_r(Y_r^N)]$ individually.
However, as \cite{VH_newapproach2} points out, by the supersymmetric duality (see \cite{MP24_MR4700374}), the asymptotic expansions for $X_i^N$ and $Y_i^N$ can be controlled simultaneously, so instead of stating the individual estimates separately, we record the following joint bound.

\begin{theorem}[Asymptotic expansions for multi-trace $O(N)$ and $\Sp (N)$ matrices]\label{thm:ExpansionSmoothO-Sp}
For any $m \in \Z_{\geq 0}$, there exists an $r$-linear functional $\mu_m$ on $C^\infty (\R)^r$ such that the following holds with some universal constant $C>0$:\\
fix any $r$-tuple of smooth functions $h_i \in C^\infty (\R)\ (i=1,...,r)$, 
and define 
\begin{align*}
&f_i(\theta):=h_i (K_i \cos \theta)
\quad \text{with} \quad
K_i=\|P_i\|_{C^* (F_d)} , \quad i=1,...,r,\\
&\quad \text{and } \quad F(x_1,\dots ,x_r):= f_1^{(1)}(x_1)\cdots f_r^{(1)}(x_r);
\end{align*} 
then, for every $m, N \in \mathbb{N}$, and for any $u \in (0,1)$, we have
\begin{align}\label{ineq:ExpansionSmoothO-Sp}
&\left|\E [\tr h_1 (X_1^N)\cdots \tr h_r(X_r^N)]-\sum_{k=0}^{m-1} \frac{\nu_k(h_1, \dots ,h_r)}{N^k}\right| 
\vee \left|\E [\tr h_1 (Y_1^N)\cdots \tr h_r(Y_r^N)]-\sum_{k=0}^{m-1} \frac{\nu_k(h_1, \dots ,h_r)}{(-2N)^k}\right|
\notag \\
&\leq \frac{C^{m+r}}{N^m} \Bigg( \frac{(\tilde{q}_1+\cdots +\tilde{q}_r)^{\lceil m^\prime \rceil}}{u^m} \|F\|_{C^{\lceil m^\prime \rceil}([0,2\pi]^r)} + \frac{(\tilde{q}_1+\cdots +\tilde{q}_r)^{\lceil 2m^\prime \rceil}}{m! u^{2m}} \|F\|_{C^{\lceil 2m^\prime \rceil}([0,2\pi]^r)} \Bigg) ,
\end{align}
where $m^\prime := (1+u)m$.
\end{theorem}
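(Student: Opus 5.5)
The plan mirrors the unitary proof, Proposition \ref{prop:ExpansionPolyUnitary} followed by Theorem \ref{thm:ExpansionSmoothUnitary}. The one new ingredient is a joint rational expression valid at both $1/N$ and $-1/(2N)$.

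\textbf{Step 1: joint rational expression.} Let $h_i\in\mathcal{P}_{q_i}$ and $p=\sum_i\tilde q_i q_i$. First reduce the multi-trace expectation to a single trace. As in the remark following Proposition \ref{prop:ExpansionPolyUnitary}, insert auxiliary independent Haar orthogonal (resp.\ symplectic) matrices $U_{d+1},\dots,U_{d+r}$ (resp.\ $V_{d+1},\dots,V_{d+r}$) between the factors. This writes $\E[\tr h_1(X_1^N)\cdots\tr h_r(X_r^N)]$ as a single normalized trace of a word of total length at most $p+2r$. Alternatively, one can expand directly in words as in Lemma \ref{lemma:RationalExpression}, using the orthogonal and symplectic Weingarten functions. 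Either way, the orthogonal/symplectic analogue of Lemma 7.1 in \cite{VH_newapproach2} applies, together with the supersymmetric duality of \cite{MP24_MR4700374}. The conclusion I aim for is a single rational function $\Psi_{h_1,\dots,h_r}=f/g$ with $f,g\in\mathcal{P}_{\lfloor C\bar p\rfloor}$, $\bar p=p(1+\log p)$, and $g$ of the form $g_{Cp}$ up to harmless modifications, such that
\[
\Psi(\tfrac1N)=\E[\tr h_1(X_1^N)\cdots\tr h_r(X_r^N)],\qquad \Psi(-\tfrac1{2N})=\E[\tr h_1(Y_1^N)\cdots\tr h_r(Y_r^N)]
\]
for $N>Cp$. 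The duality relates $\Ort(N)$ and $\Sp(N)$ through $N\mapsto -2N$ at the level of Weingarten functions and index sums. I expect this step to be the main obstacle. The first thing I would try is to check the duality on each word, using the Brauer-algebra form of the Weingarten function, and then pass to linear combinations.

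\textbf{Step 2: polynomial expansion.} Since $\|X_i^N\|,\|Y_i^N\|\le K_i$ almost surely, $|\Psi|\le\prod_i\|h_i\|_{[-K_i,K_i]}$ on the set $\{1/N\}\cup\{-1/(2N)\}$. This set contains points of the form $1/M$ with $M\in\Z$ and $|M|>Cp$, namely $M=N$ and $M=-2N$. On the remaining points $-1/(2N+1)$ of $I_q$, I would either adapt Lemma \ref{lemma:RationalBernstein} to the sampling set $\{1/N\}\cup\{-1/(2N)\}$, which has comparable density near $0$, or use the interpolation Lemma \ref{lemma:Interpolation} applied to $f$ on each side, together with lower bounds on $g$. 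Then define $\mu_m(h_1,\dots,h_r)=\Psi^{(m)}(0)/m!$. Taylor's theorem on $[0,1/N]$ and on $[-1/(2N),0]$, combined with the rational Bernstein bound, gives the analogue of \eqref{ineq:mu_bound} and \eqref{ineq:ExpansionPolyUnitary} simultaneously for the two expansions, in powers of $1/N$ and $1/(-2N)$. The small-$N$ case $N<c\bar p$ is handled exactly as in the proof of Proposition \ref{prop:ExpansionPolyUnitary}.

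\textbf{Step 3: smooth extension.} This repeats the argument of Theorem \ref{thm:ExpansionSmoothUnitary} verbatim. Use Chebyshev expansions on $[-K_i,K_i]$ and reduce to vanishing constant terms by induction on $r$. Extend $\mu_m$ through the bound \eqref{ineq:nu_m_Bound} and Lemma \ref{lemma:extensionBoundedMultilinear}. Sum the polynomial error bounds over $(q_1,\dots,q_r)$, using $\bar p\le \e^u u^{-1}p^{1+u}$, and control the coefficient sums with \eqref{ineq:FourierCoeff}. Both expansions carry the same functionals $\mu_m$, so taking the maximum of the two errors yields \eqref{ineq:ExpansionSmoothO-Sp}.
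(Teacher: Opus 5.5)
Your proposal is correct and follows the route the paper sketches. You reduce to a single trace by inserting auxiliary independent Haar matrices, use the orthogonal/symplectic rational structure and the supersymmetric duality of \cite{VH_newapproach2} (sampling at $\tfrac1N$ and $-\tfrac1{2N}$) together with the rational Bernstein inequality, and then extend to smooth test functions by Chebyshev expansion as in Theorem \ref{thm:ExpansionSmoothUnitary}.

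Of your two options in Step 2, only the first gives the stated bounds: adapting the proof of Lemma \ref{lemma:RationalBernstein} to the sampling set $\{\tfrac1N\}\cup\{-\tfrac1{2N}\}$, which is what \cite{VH_newapproach2} supplies. The second option, interpolating $f$ and then dividing by a lower bound on $g$, loses a factor of order $\e^{c'p/(1+\log p)^2}$. This is because $g$ is that small at the ends of $[-\tfrac{1}{c\bar p},\tfrac{1}{c\bar p}]$, and such a factor cannot be absorbed by the Chebyshev coefficients of a general $C^\infty$ function.
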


\section{Applications of the asymptotic expansions}\label{sec:Application}

\subsection{Higher-order asymptotic vanishing of cumulants}
In this section, as an application of the asymptotic expansion results, 
(e.g. Theorem \ref{thm:ExpansionSmoothGUE}, Theorem \ref{thm:ExpansionSmoothUnitary}), we show the higher-order asymptotic vanishing of cumulants for traces of functions in classical random matrix ensembles.

We begin by recalling the definition of cumulants, and refer to \cite{Zvonkin97_MR1492512}, \cite{Rota64_MR174487}, \cite{Stanley97_MR1442260}.
For random variables $Y_1,\dots,Y_r$ and a partition $\pi=\{B_1,...,B_\pi\}\in\Part (r)$, the classical cumulant $C_\pi (Y_1,\dots,Y_r)$ is defined by
\begin{align*}
C_\pi (Y_1,\dots,Y_r)
:= \sum_{\pi^\prime \leq \pi} \E_{\pi^\prime}[Y_1,\dots,Y_r] \Moeb(\pi^\prime,\pi),
\end{align*}
and for $\pi=1_r$, we write $C_r(Y_1,\dots,Y_r):=C_{1_r}(Y_1,\dots,Y_r)$, where
\begin{align*}
\E_{\pi}[Y_1,\dots,Y_r]
:= \prod_{j=1}^{\#\pi} \E\left[\prod_{i\in B_j} Y_i\right].
\end{align*}
Then we have the moment-cumulant formula
\begin{align*}
\E_\pi [Y_1\cdots Y_r]
= \sum_{\pi^\prime \leq \pi} C_{\pi^\prime} (Y_1,\dots,Y_r).
\end{align*}

There are several alternatives to define cumulants, for instance, via generating functions, and one is 
\begin{align*}
C_l(Y_{i(1)},...,Y_{i(l)})=\frac{\partial^l}{\partial t_{i(1)}\cdots \partial t_{i(l)}}\log \E\left[\exp\left(t_1 Y_1+\cdots +t_r Y_r\right)\right]\Bigg|_{t_1=\cdots =t_r=0},
\end{align*}
for any $l \in \N$ and any indices $i(1),...,i(l) \in \{1,...,r\}$.
With this definition, one knows the relation between moments and cumulants in terms of their formal generating functions:
\begin{align}
\log \E[\exp (t Y)] 
=\log \left( \sum_{k\geq 0} \E[Y^k] \frac{t^k}{k!} \right)
= \sum_{k\geq 1} C_k(Y,...,Y) \frac{t^k}{k!},
\end{align}
and we also have the multivariate version of the above relation:
\begin{align}\label{eq:CumulantGenerating_multi}
\log \E\left[\exp\left(t_1 Y_1+\cdots + t_r Y_r\right)\right]
&=\log \left( \sum_{\bm{k} \in \Z_{\geq 0}^r} \E[Y_1^{k_1}\cdots Y_r^{k_r}] \frac{\bm{t}^{\bm{k}}}{\bm{k}!} \right) \notag \\
&=\sum_{\bm{k} \in \Z_{\geq 0}^r \setminus \{\bm{0}\}} C_{\bm{k}}(Y_1,...,Y_r) \frac{\bm{t}^{\bm{k}}}{\bm{k}!},
\end{align}
where for $\bm{k}=(k_1,...,k_r) \in \Z_{\geq 0}^r \setminus \{ \bm{0}\}$, 
$\bm{t}^{\bm{k}}:=t_1^{k_1}\cdots t_r^{k_r}$,
$\bm{k}!:=k_1!\cdots k_r!$, and
$C_{\bm{k}}(Y_1,...,Y_r)$ is the cumulant defined by
\begin{align*}
C_{\bm{k}}(Y_1,...,Y_r):=C_{k_1+\cdots +k_r}(\underbrace{Y_1,...,Y_1}_{k_1 \text{ times}},...,\underbrace{Y_r,...,Y_r}_{k_r \text{ times}}).
\end{align*}
This definition is equivalent to the previous one defined by the moment-cumulant formula, since expanding the logarithm of the moment generating function yields the moment–cumulant relations. 
(See for details on the computation, e.g. section 6 in \cite{RotaShen00_MR1779783}.)

We now turn our attention to the higher-order asymptotic vanishing of cumulants for traces of functions of classical random matrices--GUE/GOE/GSE and Haar-distributed random matrices in $\U(N), \Ort(N), \Sp(N)$--considered in the previous sections. 
One knows that for 
those matrices,
the $r$-th order cumulant of unnormalized traces in polynomials of those random matrices decays with the order $N^{2-r}$ as $N \to \infty$, and here we restate this fact for our settings. 
For proofs of the results below, we refer to 
\cite{Collins2003}, \cite{MS06} for the Gaussian, orthogonal and symplectic cases, and to \cite{CMSS07} for a more systematic treatment (higher order freeness) in the unitary case. 
Note that unlike the first order asymptotics that are the same irrespective of the symmetry class (unitary, orthogonal, symplectic -- always asymptotic freeness), 
higher order cumulants rely heavily on the symmetry class, see \cite{Redelmeier14_MR3217665} and \cite{MP13_MR3098908}.

\begin{theorem}[Asymptotic vanishing of cumulants for polynomial test functions]\label{thm:CumulantPoly}
Take the same setting and notations as in Section \ref{sec:GUE}, \ref{sec:GOE-GSE}, \ref{sec:unitary}, or \ref{sec:orthogonal-symplectic}, and let $X_1^N,...,X_r^N$ be self-adjoint polynomials of either 
GUEs, GOEs, GSEs, or Haar-distributed matrices in $\U(N),\Ort(N), \Sp(N)$.
For any polynomials $h_i \in \mathcal{P} \ (i=1,...,r)$, we define $h_i(X_i^N) \ (i=1,...,r)$.
Then, 
the limit
\begin{align}
\lim_{N \to \infty} N^{-2} C_r(N\Tr h_{1}(X_{1}^{N}),...,N \Tr h_{r}(X_{r}^{N}))
\end{align}
exists.
\end{theorem}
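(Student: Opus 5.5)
Note first that $N\Tr = N^2\tr$, so by multilinearity of cumulants
\[
N^{-2}C_r(N\Tr h_1(X_1^N),\dots,N\Tr h_r(X_r^N)) = N^{2r-2}\,C_r(\tr h_1(X_1^N),\dots,\tr h_r(X_r^N)).
\]
The statement is therefore equivalent to the claim that the classical $r$-th cumulant of normalized traces behaves like $N^{2-2r}$ times a convergent quantity. Since the statement is attributed to \cite{Collins2003,MS06,CMSS07}, the plan is to derive it from the genus-expansion and Weingarten structure, recasting it in the language of the $1/N$ expansions already established.

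\textbf{Step 1: moments as power series and cumulants by Möbius inversion.} Proposition \ref{prop:ExpansionPolyGUE} and Lemma \ref{lemma:RationalExpression}, together with their GOE/GSE and $\Ort/\Sp$ analogues, show that each mixed moment $\E[\prod_{i\in B}\tr h_i(X_i^N)]$ is a polynomial, or a rational function analytic near $0$, in $1/N$. By the Möbius definition, $C_r=\sum_{\pi}\E_\pi\Moeb(\pi,1_r)$ is a finite combination of products of such moments. Hence $C_r(\tr h_1(X_1^N),\dots,\tr h_r(X_r^N))=\Phi(1/N)$, where $\Phi$ is rational and analytic at $0$. It then suffices to show that the Taylor coefficients of $\Phi$ of order $<2r-2$ vanish. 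Once this is known, $N^{2r-2}\Phi(1/N)\to\Phi^{(2r-2)}(0)/(2r-2)!$, and the limit exists.

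\textbf{Step 2: the order of vanishing, which is the main obstacle.} By multilinearity, reduce to monomials or words $h_i(X_i^N)\mapsto \tr w_i$.

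In the Gaussian case, expand $\E[\prod_i\Tr w_i]$ by Wick's formula as a sum over pairings $p$, with weight $N^{\#(\gamma p)-k/2}$, where $\gamma$ is the permutation whose cycles are the $r$ words. Taking the cumulant removes exactly the pairings whose graph $\langle\gamma,p\rangle$ is disconnected. This is the standard fact that the log of a sum over graphs counts connected graphs; it follows from the Möbius inversion \eqref{MobiusInversion} applied to the partition $\ker$ of connected components. For a connected map with $r$ vertices, Euler's formula gives $\#(\gamma p)-k/2 \le 2-r$. Converting to normalized traces, each $\tr$ contributes a further factor $N^{-1}$, which yields $O(N^{2-2r})$. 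For GOE/GSE the same argument works with nonorientable maps; the Euler bound is unchanged.

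For Haar unitaries, write the moments through Weingarten sums. The cumulant becomes a sum over connected configurations, with the Weingarten function replaced by its classical cumulant analogue, i.e.\ the factorization of $\Wg$ into "free cumulant" parts \cite{Collins2003}. The key input is that the Weingarten function has the asymptotic factorization $\Wg(\sigma\tau)=\prod\Wg(\text{blocks})(1+O(N^{-2}))$, with the cross-block corrections controlled by the genus. This is exactly the ingredient of \cite{CMSS07,MS06}, and I would cite it rather than reprove it.

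\textbf{Step 3: conclusion.} The order-of-vanishing estimate shows that $\Phi$ has a zero of order at least $2r-2$ at $0$, which gives the existence of the limit as explained in Step 1. For Haar ensembles the same conclusion follows because $\Phi$ is analytic near $0$, so no truncation is needed.
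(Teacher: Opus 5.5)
The paper gives no proof of this statement; it defers to \cite{Collins2003,MS06,CMSS07}. Your reduction to $N^{2r-2}C_r(\tr h_1(X_1^N),\dots,\tr h_r(X_r^N))$, the rationality/Taylor argument, and the Gaussian part are correct. In the Gaussian part you use Wick's formula, identify the cumulant with connected maps via M\"obius inversion, and apply Euler's bound $\#(\gamma p)-k/2\le 2-r$; this is the standard argument behind those references.

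\textbf{The gap is in the Haar step.} You rightly say that the cumulant brings in ``classical cumulants'' of $\Wg$. But the estimate you then name as the key input, $\Wg(\rho)=\prod_B\Wg(\rho_B)\,(1+O(N^{-2}))$, is too weak for $r\ge 3$. Unlike Wick weights, the Weingarten weight of a configuration $(\sigma,\tau)$ that splits the $r$ traces into $s$ components does not factor exactly. Such configurations are therefore not removed by the cumulant. They contribute $N^{\#(\cdots)}\kappa_s(\rho)$, where
\[
\kappa_s(\rho)=\sum_{\pi\in\Part(s)}\Moeb(\pi,1_s)\prod_{C\in\pi}\Wg(\rho_C),\qquad \rho=\sigma^{-1}\tau ,
\]
and $\rho_C$ is the restriction of $\rho$ to the components in $C$. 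By the genus bound on each component, $N^{\#(\cdots)}\prod_B\Wg(\rho_B)=O(N^{2s-r})$.

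Multiplicativity up to $1+O(N^{-2})$, together with $\sum_{\pi}\Moeb(\pi,1_s)=0$ for $s\ge 2$, only gives $\kappa_s=O(N^{-2})\prod_B\Wg(\rho_B)$. That yields a contribution $O(N^{2s-r-2})$ to $C_r(\Tr w_1,\dots,\Tr w_r)$. For $s=r=3$ this is $O(N)$, whereas you need $O(N^{-1})$.

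The true cancellation is genuinely stronger. For example,
\[
\Wg_3(\mathrm{id})-3\Wg_2(\mathrm{id})\Wg_1(\mathrm{id})+2\Wg_1(\mathrm{id})^3=\frac{8}{N^3(N^2-1)(N^2-4)}.
\]
This is $N^{-4}$ times $\Wg_1(\mathrm{id})^3$, not merely $N^{-2}$ times it, and it does not follow from multiplicativity up to $1+O(N^{-2})$.

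\textbf{What is needed} is Collins' estimate on the cumulants of the Weingarten function: $\kappa_s(\rho)=O(N^{-2(s-1)})\prod_B\Wg(\rho_B)$, so each merging of components costs a factor $N^{-2}$. Combined with the genus bound, this gives exactly $C_r(\Tr w_1,\dots,\Tr w_r)=O(N^{2-r})$. It is proved in \cite{Collins2003} and used in \cite{MSS07_MR2294222,CMSS07}. Note that \cite{MS06} treats Gaussian and Wishart matrices, not Weingarten calculus.

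Your Step 2 also does not address Haar orthogonal and symplectic matrices. These require the analogous cumulant bounds for the orthogonal and symplectic Weingarten functions (cf.\ \cite{CS06_MR2217291}). With these inputs stated correctly, your Steps 1 and 3 go through as written.
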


Now we extend it to the case of smooth test functions by using the asymptotic expansion results. 

\begin{theorem}[Higher-order asymptotic vanishing of cumulants for smooth test functions]\label{thm:CumulantSmooth}
Take the same setting and notations as in Section \ref{sec:GUE}, \ref{sec:GOE-GSE}, \ref{sec:unitary}, or \ref{sec:orthogonal-symplectic}, and let $X_1^N,...,X_r^N$ be self-adjoint polynomials of either 
GUEs, GOEs, GSEs, or Haar-distributed matrices in $\U(N),\Ort(N), \Sp(N)$.
In the Gaussian ensemble cases, for any bounded smooth functions $h_i \in C^{4r-1} \ (i=1,...,r)$, whereas in the Haar-distributed matrix cases, for any smooth functions $h_i \in C^{4r} \ (i=1,...,r)$, we define $h_i(X_i^N) \ (i=1,...,r)$.
Then,  
the limit
\begin{align}\label{eq:CumulantSmooth_limit}
\lim_{N \to \infty} N^{-2} C_r(N\Tr h_{1}(X_{1}^{N}),...,N \Tr h_{r}(X_{r}^{N}))
\end{align}
exists.
\end{theorem}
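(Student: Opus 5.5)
Note first that $N\Tr = N^2\tr$, so $C_r(N\Tr h_1(X_1^N),\dots,N\Tr h_r(X_r^N)) = N^{2r} C_r(\tr h_1(X_1^N),\dots,\tr h_r(X_r^N))$. The claim is therefore that $N^{2r-2}C_r(\tr h_1,\dots,\tr h_r)$ converges. By the definition of cumulants through the Möbius function,
\begin{align*}
C_r(\tr h_1(X_1^N),\dots,\tr h_r(X_r^N)) = \sum_{\pi\in\Part(r)} \Moeb(\pi,1_r)\prod_{B\in\pi}\E\Big[\prod_{i\in B}\tr h_i(X_i^N)\Big].
\end{align*}
Each factor is a multi-trace expectation over the sub-family $(h_i)_{i\in B}$. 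We may therefore apply Theorem \ref{thm:ExpansionSmoothGUE} (resp. \ref{thm:ExpansionSmoothGOE-GSE}, \ref{thm:ExpansionSmoothUnitary}, \ref{thm:ExpansionSmoothO-Sp}) to each block, with expansion order $m=2r-1$. This gives
\begin{align*}
\E\Big[\prod_{i\in B}\tr h_i(X_i^N)\Big]=\sum_{k=0}^{2r-2}\frac{\nu^{B}_k((h_i)_{i\in B})}{N^k}+O(N^{-(2r-1)}).
\end{align*}
Here $\nu^B_k$ is the $|B|$-linear functional attached to the block, and $N^{-k}$ should be read as $(-2N)^{-k}$ in the symplectic cases. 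Multiplying over blocks and summing over $\pi$ shows that
\begin{align*}
C_r = \sum_{k=0}^{2r-2}\frac{\kappa_k(h_1,\dots,h_r)}{N^k}+O(N^{-(2r-1)}),
\end{align*}
where each $\kappa_k$ is an explicit finite combination of products of the functionals $\nu^B_{k_B}$ with $\sum_B k_B=k$. Each $\kappa_k$ is $r$-linear and continuous in the $C^{2m+1}$ (Gaussian) or $C^{2m+2}$ (Haar) topology by Remarks \ref{remark:C^M_GUE} and \ref{remark:C^M_unitary}. With $m=2r-1$ these regularity classes are $C^{4r-1}$ and $C^{4r}$, which matches the hypotheses. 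The error terms are finite, since the relevant $\|F\|_{C^{2m}}$ norms and the sup norms of the bounded $h_i$ are finite; the exponential terms $\e^{-cN}$ are negligible. The limit in \eqref{eq:CumulantSmooth_limit} is then $\kappa_{2r-2}(h_1,\dots,h_r)$, provided that $\kappa_k=0$ for $k<2r-2$.

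The crux is this vanishing, $\kappa_k=0$ for all $k\le 2r-3$, on $C^{4r-1}$ (or $C^{4r}$) functions. I would prove it first for polynomials. For polynomial $h_i$, Theorem \ref{thm:CumulantPoly} says that $N^{2r-2}C_r(\tr h_1,\dots,\tr h_r)$ has a finite limit. On the other hand, Proposition \ref{prop:ExpansionPolyGUE} (resp. \ref{prop:ExpansionPolyUnitary} and its analogues), applied blockwise, gives the same finite expansion in $1/N$ with the same coefficients $\kappa_k$. The smooth functionals extend the polynomial ones, so these are the same $\kappa_k$. Uniqueness of asymptotic expansion coefficients then forces $\kappa_k(h_1,\dots,h_r)=0$ for $k<2r-2$ and all polynomials. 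Finally, polynomials are dense in $C^{M}([-K_i,K_i])$ for every $M$, and each $\kappa_k$ is a bounded multilinear map in that norm. Lemma \ref{lemma:extensionBoundedMultilinear} therefore shows that $\kappa_k$ vanishes identically on the smooth class.

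The main obstacle is bookkeeping rather than analysis. One must check that the continuity bounds for each block functional $\nu^B_k$ with $k\le 2r-2$ use at most the regularity assumed. The worst case is the full block $B=[r]$ with $k=2r-2$, which needs $C^{2(2r-2)+1}$ or $C^{2(2r-2)+2}$ by the remarks. The remainder at order $m=2r-1$ needs $C^{4r-1}$ or $C^{4r}$ control of $F$, which is exactly the stated hypothesis. This also fixes the choice $m=2r-1$: it is the smallest order for which the $O(N^{-(2r-1)})$ remainder, multiplied by $N^{2r-2}$, tends to zero, so no larger $m$ (and hence no more regularity) is needed. In the Haar cases I would choose $u$ small so that $\lceil 2(1+u)m\rceil=2m+1$, as in Remark \ref{remark:C^M_unitary}. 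It also has to be checked that the product of the block expansions, with their respective remainders, still yields an $O(N^{-(2r-1)})$ total error; this is routine because all blocks involve fixed functions.
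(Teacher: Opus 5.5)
Your proposal is correct and follows essentially the same route as the paper. You expand each block of the moment--cumulant formula to order $m=2r-1$ using the smooth multi-trace expansions, then use Theorem \ref{thm:CumulantPoly} on polynomials to kill the coefficients of $N^{2r-2-l}$ for $l\le 2r-3$. Finally you transfer this vanishing to $C^{4r-1}$ (resp.\ $C^{4r}$) functions by density and Lemma \ref{lemma:extensionBoundedMultilinear}, so that the limit is the $l=2r-2$ coefficient.

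The only cosmetic difference is that you work with $|B|$-linear block functionals, whereas the paper inserts the constant function $1$ into the $r$-linear $\nu_l$ for indices outside $B$.
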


\begin{remark}
    We only give the proof in the GUE case, as Gaussian ensembles with different symmetry classes (orthogonal, symplectic), and Haar distributed ensembles (unitary, orthogonal symplectic) can be shown in the same way.
    Note, however that unlike the expectation of traces, higher order cumulants depend on the symmetry class in Theorem \ref{thm:CumulantPoly} and therefore, also in Theorem \ref{thm:CumulantSmooth}. The limits are governed by higher order freeness (real, orthogonal, symplectic). 
    For details, we refer to 
    \cite{Redelmeier14_MR3217665} and \cite{MP13_MR3098908}.
\end{remark}

\begin{proof}
By the definition of cumulants, we have
\begin{align}\label{eq:cumulant_moment_expansion}
&N^{-2} C_r(N\Tr h_{1}(X_{1}^{N}),...,N \Tr h_{r}(X_{r}^{N})) \notag \\
&=N^{2r-2} C_r(\tr h_{1}(X_{1}^{N}),...,\tr h_{r}(X_{r}^{N})) \notag \\
&=N^{2r-2} \sum_{\pi \in \Part (r)} \E_{\pi} \left[\tr h_{1}(X_{1}^{N}),...,\tr h_{r}(X_{r}^{N})\right] \Moeb (\pi, 1_r).
\end{align}
Let $\nu_l \ (0\leq l \leq m-1)$ be the $r$-linear functionals on $C^{2m+1} (\R)^r$ or $\mathcal{P}^r$ given in Proposition \ref{prop:ExpansionPolyGUE} and Theorem \ref{thm:ExpansionSmoothGUE}. 
For any $B\subset\{1,\dots,r\}$, we define $a_l^B \ (0 \leq l \leq m-1)$ as 
\begin{align}
a_l^B \big(h_1,...,h_r \big):=\nu_l(g_1,\dots,g_r),
\end{align}
where $g_i=h_i$ for $i\in B$ and $g_i=1$ for $i\notin B$.

For any partition $\pi=\{B_1,...,B_{\# \pi}\} \in \Part (r)$, Proposition \ref{prop:ExpansionPolyGUE} and Theorem \ref{thm:ExpansionSmoothGUE} yields that for
an $r$-tuple $h=(h_1,\ldots , h_r)$ that is either
bounded smooth functions in $C^{2m+1}(\R)^r$ or an element of  $\mathcal{P}^r$, we have
\begin{align}
&\E_{\pi} \left[\tr h_1(X_1^{N}),...,\tr h_r(X_r^{N})\right] \notag \\
&=\prod_{j=1}^{\# \pi} \E \left[\prod_{i \in B_j} \tr h_i(X_i^{N})\right] \notag \\
&=\prod_{j=1}^{\# \pi} \left(\sum_{l=0}^{m-1} \frac{a_l^{B_j} (h_1,...,h_r)}{N^l} + \frac{a_m^{B_j}(h_1,...,h_r)(N)}{N^m}\right) \notag \\
&=\sum_{l=0}^{m-1} \frac{1}{N^l} \sum_{\substack{0 \leq l_1,...,l_{\# \pi} \leq m-1 \\ l_1+\cdots +l_{\# \pi}=l}} \prod_{j=1}^{\# \pi} a_{l_j}^{B_j} (h_1,...,h_r) +\sum_{l=m}^{m \times \# \pi} \frac{1}{N^l} \sum_{\substack{l_1,...,l_{\# \pi} \geq 0 \\ l_1+\cdots +l_{\# \pi}=l}} \prod_{j=1}^{\# \pi} a_{l_j}^{B_j} (h_1,...,h_r),
\end{align}
where $a_m^{B_j}(h_1,...,h_r)(N)$ is a bounded function in $N$.
Specifically, we have
\begin{align}\label{ineq:a-bound}
\limsup_{N \to \infty} |a_m^{B_j}(h_1,...,h_r)(N)| \leq C \|h_1^{(1)}(x_1)\cdots h_r^{(1)}(x_r)\|_{C^{2m}([-K_1,K_1]\times \cdots \times [-K_r,K_r])},
\end{align}
with some constant $C>0$ depending on $m, r$ for any $j$.
Combining with \eqref{eq:cumulant_moment_expansion}, we have 
\begin{align}\label{eq:cumulant_N_expansion}
&N^{-2} C_r(N\Tr h_1(X_1^{N}),...,N \Tr h_r(X_r^{N})) \notag \\
&=\sum_{l=0}^{m-1} N^{2r-2-l} \sum_{\substack{\pi = \{B_1,...,B_{\# \pi}\}\\ \in \Part (r)}} \Moeb (\pi, 1_r) \sum_{\substack{l_1,...,l_{\# \pi} \geq 0 \\ l_1+\cdots +l_{\# \pi}=l}} \prod_{j=1}^{\# \pi} a_{l_j}^{B_j} (h_1,...,h_r)
+ O\left(N^{2r-2-m}\right).
\end{align}
As we saw \eqref{ineq:nu_m_Bound} in Theorem \ref{thm:ExpansionSmoothGUE}, the functionals $\nu_l \ (0 \leq l \leq m-1)$ satisfy 
\begin{align*}
|\nu_l (h_1,...,h_r)| \leq C \|h_1^{(1)}(x_1)\cdots h_r^{(1)}(x_r)\|_{C^{2l}([-K_1,K_1]\times \cdots \times [-K_r,K_r])},
\end{align*}
where 
$C>0$ is a constant depending only on $l,r$, so the same bound holds for $a_l^B$ for any $B\subset\{1,\dots,r\}$.
Choose sequences of polynomials $\{g_{n,i}\}_{n=1}^\infty$ such that $g_{n,i}\to h_i$ in the $C^{2m+1}([-K_i,K_i])$ norm as $n\to\infty$ for each $i=1,...,r$.
Then, by the above bound combined with the multi-linearity, we have
\begin{align*}
\lim_{n \to \infty} a_l^B (g_{n,1},...,g_{n,r}) = a_l^B (h_1,...,h_r),
\end{align*}
for any $0 \leq l \leq m-1$ and any $B \subset \{1,...,r\}$.

We now take $m$ to be $2r-1$. Theorem \ref{thm:CumulantPoly} yields that for any polynomials $g_{n,i} \in \mathcal{P} \ (i=1,...,r)$, we have for each $0\leq l \leq 2r-3$ the coefficient of $N^{2r-2-l}$ in \eqref{eq:cumulant_N_expansion} vanishes, i.e.,
\begin{align}\label{eq:CoeffVanish}
\sum_{\substack{\pi = \{B_1,...,B_{\# \pi}\}\\ \in \Part (r)}} \Moeb (\pi, 1_r) \sum_{\substack{l_1,...,l_{\# \pi} \geq 0 \\ l_1+\cdots +l_{\# \pi}=l}} \prod_{j=1}^{\# \pi} a_{l_j}^{B_j} (g_{n,1},...,g_{n,r})=0.
\end{align}
Thus, by taking the limit $n \to \infty$, \eqref{eq:CoeffVanish} also holds for any bounded smooth functions $h_i \in C^{2m+1} (\R)$. Alternatively, this extension can be justified by viewing the left-hand side of \eqref{eq:CoeffVanish} as a bounded multi-linear functional. Since the functional vanishes on the dense subspace of polynomials $\mathcal{P}$, Lemma \ref{lemma:extensionBoundedMultilinear} implies it must vanish on the entire space of smooth functions.

Therefore, we conclude that for any bounded smooth functions $h_i \in C^{2(2r-1)+1} (\R) \ (i=1,...,r)$, the limit
\begin{align*}
\lim_{N \to \infty} N^{-2} C_r(N\Tr h_{1}(X_{1}^{N}),...,N \Tr h_{r}(X_{r}^{N}))
\end{align*}
exists, and 
\begin{align*}
&\lim_{N \to \infty} N^{-2} C_r(N\Tr h_{1}(X_{1}^{N}),...,N \Tr h_{r}(X_{r}^{N})) \notag \\
&=\sum_{\substack{\pi = \{B_1,...,B_{\# \pi}\}\\ \in \Part (r)}} \Moeb (\pi, 1_r) \sum_{\substack{l_1,...,l_{\# \pi} \geq 0 \\ l_1+\cdots +l_{\# \pi}=2r-2}} \prod_{j=1}^{\# \pi} a_{l_j}^{B_j} (h_{1},...,h_{r}) \notag \\
&= \lim_{n \to \infty} \sum_{\substack{\pi = \{B_1,...,B_{\# \pi}\}\\ \in \Part (r)}} \Moeb (\pi, 1_r) \sum_{\substack{l_1,...,l_{\# \pi} \geq 0 \\ l_1+\cdots +l_{\# \pi}=2r-2}} \prod_{j=1}^{\# \pi} a_{l_j}^{B_j} (g_{n,1},...,g_{n,r}) \notag \\
&= \lim_{n \to \infty} \lim_{N \to \infty} N^{-2} C_r(N\Tr g_{n,1}(X_{1}^{N}),...,N \Tr g_{n,r}(X_{r}^{N})),
\end{align*}
which completes the proof.
\end{proof}

\subsection{Central limit theorems}
As a corollary of Theorem \ref{thm:CumulantSmooth}, we get the Central Limit Theorems (CLTs) for traces of functions of classical matrix ensembles. CLTs in the context of statistics of random matrices involving classical ensembles have been the subject of intense study.
For linear statistics of single matrix models, this subject has been well-studied, see for example \cite{Johansson1998_MR1487983}, \cite{Guionnet2002_MR1899457}, \cite{LP09_MR2561434}. 

Sharp conditions regarding the regularity of test functions have been extensively studied.
For Haar-distributed matrices, this topic is discussed, for example, in \cite{Soshnikov00_MR1797877, Silverstein90_MR1062064, JL21_MR4348683, Silverstein22_MR4480828, CJ24_MR4718383}. For Wigner matrices (including GUE/GOE/GSE), related topics have been studied in \cite{Shcherbina11_MR2829615, SW13_MR3116567, BD17_MR3556288, BaoHe23_MR4677731, CES23_MR4551555, Reker24_MR4842843}.
Many of these papers show that regularity conditions slightly weaker than $C^1$ (specifically of Sobolev type $H^{1/2}$) are sufficient.

However, these works primarily concern linear spectral statistics or models where the matrix is modified by a constant matrix, rather than the non-commutative functional calculus of multiple random matrices.

In the setting of non-commutative polynomials and multiple random matrices, the convergence and joint fluctuations for fixed-degree polynomials of classical random matrix ensembles follow from the combinatorial theory of second-order freeness; see, for example, \cite{MS06_MR2216446, MSS07_MR2294222, CMSS07, MP13_MR3098908, Redelmeier14_MR3217665}. In particular, recent results on the structure of the fluctuation moments of Wigner matrices were obtained in \cite{CMPS22_MR4414193}.

In this section, we focus on extending these results beyond polynomials to smooth functions.
After completing the results of this section, we figured out that the paper \cite{DM23_MR4552697} by Diaz and Mingo is relevant to our manuscript, in the sense that it establishes criteria for CLTs to hold in the multi-matrix context with smooth functions -- in the framework of bounded Fréchet variation for $C^1$-functions.
As far as we can tell, although some approximation steps bear similarities with our results, the results of \cite{DM23_MR4552697} do not allow to rederive our results directly. 

We start with $C^\infty$ test functions and observe that our results extend even further to a context of finite regularity.

\begin{theorem}
[Central Limit Theorem for smooth test functions]\label{thm:CLT}
Take the same setting and notations as in Section \ref{sec:GUE}, \ref{sec:GOE-GSE}, \ref{sec:unitary}, or \ref{sec:orthogonal-symplectic}, and let $X_1^N,...,X_r^N$ be self-adjoint polynomials of 
either GUEs, GOEs, GSEs, or Haar-distributed matrices in $\U(N),\Ort(N), \Sp(N)$.
For any smooth functions $h_i \in C^\infty (\R) \ (i=1,...,r)$, (and in the Gaussian ensemble cases, we additionally assume boundedness), the random vector
\begin{align*}
\left(\Tr h_i(X_i^{N})-\E[\Tr h_i(X_i^{N})]\right)_{i=1}^r
\end{align*}
converges in distribution as $N \to \infty$ to a centered Gaussian random vector $\mathcal{N}(\bm{0}, \Sigma)$ whose covariance is given by
\begin{align*}
\Sigma_{i,j}
:=\lim_{N \to \infty} \E\Big[\left(\Tr h_i(X_i^{N})-\E[\Tr h_i(X_i^{N})]\right) \cdot \left(\Tr h_j(X_j^{N})-\E[\Tr h_j(X_j^{N})]\right)\Big].
\end{align*}
\end{theorem}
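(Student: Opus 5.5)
The plan is the method of cumulants combined with Theorem~\ref{thm:CumulantSmooth}. Set $Z_i^N := \Tr h_i(X_i^N) - \E[\Tr h_i(X_i^N)]$. Joint cumulants of order $\geq 2$ are invariant under deterministic shifts, so for $k \geq 2$ and any indices $i(1),\dots,i(k) \in \{1,\dots,r\}$,
\[
C_k(Z_{i(1)}^N,\dots,Z_{i(k)}^N) = N^{-2} C_k\big(N\Tr h_{i(1)}(X_{i(1)}^N),\dots,N\Tr h_{i(k)}(X_{i(k)}^N)\big).
\]
Theorem~\ref{thm:CumulantSmooth} is stated for an arbitrary $r$-tuple of polynomials and test functions. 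I therefore apply it with $k$ in place of $r$, to the tuple $(X_{i(1)}^N,\dots,X_{i(k)}^N)$ and $(h_{i(1)},\dots,h_{i(k)})$; repetitions are allowed, since the theorem never requires the $X$'s to be distinct. Because $h_i \in C^\infty$, the regularity hypotheses $C^{4k-1}$ or $C^{4k}$ hold for every $k$, and in the Gaussian case boundedness is assumed. This shows that $\lim_N C_k(Z^N_{i(1)},\dots,Z^N_{i(k)})$ exists for every $k$.

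The first cumulants vanish, since the $Z_i^N$ are centered. For $k=2$ the limit exists and equals $\Sigma_{i,j}$, because $C_2(Z_i,Z_j)=\E[Z_iZ_j]$. For $k \geq 3$ I need the limit to be zero, which is stronger than existence. To get it, I apply Theorem~\ref{thm:CumulantSmooth} one level further. The proof of that theorem shows that $N^{-2}C_k(N\Tr\cdots)$ has an expansion in powers of $N$, where the coefficients of $N^{2k-2-l}$ vanish for $l \leq 2k-3$ and the constant term is the limit. The quantity I actually need is $C_k(\Tr h_{i(1)}(X_{i(1)}^N),\dots)$, which equals $N^{2-k}$ times the normalized quantity $N^{-2}C_k(N\Tr\cdots)$ that the theorem controls. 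Since that normalized quantity converges, the cumulant of unnormalized traces is $O(N^{2-k})$ and hence tends to $0$ for $k\ge3$.

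So every mixed cumulant of $(Z_1^N,\dots,Z_r^N)$ converges to the corresponding cumulant of $\mathcal N(\bm 0,\Sigma)$. Moments are finite polynomial combinations of cumulants via the moment–cumulant formula, so all mixed moments converge to the Gaussian moments. The Gaussian law is determined by its moments, so the method of moments gives convergence in distribution; by Cramér–Wold it suffices to check linear combinations $\sum_i t_i Z_i^N$, whose cumulants are multilinear in those above. Finiteness of all moments of $Z_i^N$ is automatic in the Haar case ($\|X_i^N\|\le K_i$) and in the Gaussian case from boundedness of $h_i$. The limit matrix $\Sigma$ is positive semidefinite as a limit of covariance matrices, so $\mathcal N(\bm 0,\Sigma)$ makes sense.

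The main obstacle is the bookkeeping of normalizations, i.e.\ making sure the unnormalized-trace cumulant really is $N^{2-k}$ times the quantity shown to converge. A second point is checking that Theorem~\ref{thm:CumulantSmooth} legitimately applies to tuples with repeated entries. The polynomial-case cumulant vanishing it rests on (Theorem~\ref{thm:CumulantPoly}) is multilinear and makes no distinctness assumption, so this should be harmless.
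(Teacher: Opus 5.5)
Your proposal is correct and follows the same route as the paper. Theorem~\ref{thm:CumulantSmooth}, applied to tuples with repetitions, gives convergence of the covariances and, via the rescaling $C_k(\Tr\cdots)=N^{2-k}\cdot N^{-2}C_k(N\Tr\cdots)$, vanishing of all cumulants of order $k\ge 3$, from which the Gaussian limit follows by the method of cumulants/moments; you simply supply details the paper leaves implicit (shift invariance, Cram\'er--Wold, moment determinacy, positive semidefiniteness of $\Sigma$). One slip: your first display holds only for $k=2$ and should read $C_k(Z^N_{i(1)},\dots,Z^N_{i(k)})=N^{-k}C_k(N\Tr\cdots)=N^{2-k}\cdot N^{-2}C_k(N\Tr\cdots)$, which is the relation you correctly use later.
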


\begin{proof}
Let $Y^N_1,...,Y^N_r$ be the random variables defined by 
\begin{align*}
Y^N_i:=\Tr h_i(X_i^{N})-\E[\Tr h_i(X_i^{N})], \quad i=1,...,r,
\end{align*}
where we note that $Y^N_i$ is centered
.

By Theorem \ref{thm:CumulantSmooth}, we know that for any $k \geq 3$ and $1 \leq i_1,...,i_k \leq r$, the $k$-th order cumulant $C_k(Y^N_{i_1},...,Y^N_{i_k})$ vanishes as $N \to \infty$,
\begin{align*}
\lim_{N \to \infty} C_k(Y^N_{i_1},...,Y^N_{i_k})=0,
\end{align*}
and for any $1 \leq i,j \leq r$, the second order cumulant $C_2(Y^N_i,Y^N_j)$ converges to some finite value,
\begin{align*}
\lim_{N \to \infty} \operatorname{Cov}(Y^N_i,Y^N_j)
=\lim_{N \to \infty} C_2(Y^N_i,Y^N_j)=:\Sigma_{i,j} < \infty.
\end{align*}
With the fact that a distribution with only first and second cumulants nonzero is Gaussian, the desired CLT follows.
\end{proof}

We can further extend Theorem \ref{thm:CLT} to the case where the test functions are in some finite $C^L$ class.

\begin{theorem}
[Extension of CLT to $C^L$-functions]\label{thm:CLT-Cfinite}
Take the same setting and notations as in Section \ref{sec:GUE}, \ref{sec:GOE-GSE}, \ref{sec:unitary}, or \ref{sec:orthogonal-symplectic}, and let $X_1^N,...,X_r^N$ be self-adjoint polynomials of 
either GUEs, GOEs, GSEs, or Haar-distributed matrices in $\U(N),\Ort(N), \Sp(N)$.
There exists some positive integer $L$ such that the following holds:\\
For any smooth functions $h_i \in C^L (\R) \ (i=1,...,r)$, (and in the Gaussian ensemble cases, we additionally assume boundedness), the random vector
\begin{align*}
\left(\Tr h_i(X_i^{N})-\E[\Tr h_i(X_i^{N})]\right)_{i=1}^r
\end{align*}
converges in distribution as $N \to \infty$ to a centered Gaussian random vector $\mathcal{N}(\bm{0}, \Sigma)$ whose covariance is given by
\begin{align*}
\Sigma_{i,j}
:=\lim_{N \to \infty} \E\Big[\left(\Tr h_i(X_i^{N})-\E[\Tr h_i(X_i^{N})]\right) \cdot \left(\Tr h_j(X_j^{N})-\E[\Tr h_j(X_j^{N})]\right)\Big].
\end{align*}
In particular, we can take $L=5$ in the Gaussian ensemble cases, and $L=6$ in the Haar-distributed matrix cases.
\end{theorem}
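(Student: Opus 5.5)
The plan is to separate two ingredients, which together give the CLT for $C^L$ test functions. The first is that the covariance is controlled by a low-order Sobolev/$C^k$ norm. The second is that the higher cumulants vanish for $C^\infty$ functions, which is the content of Theorem \ref{thm:CLT}. We then transfer the Gaussian limit from smooth approximants to $C^L$ functions through a variance estimate. Without loss of generality we may assume the $h_i$ agree with compactly supported (in the Gaussian case, bounded) functions outside $[-K_i-1,K_i+1]$. In the Gaussian case the exponentially small tail term in Theorem \ref{thm:ExpansionSmoothGUE} only involves $\sup|h_i|$, so the behaviour off $[-K_i,K_i]$ is harmless.

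\textbf{Step 1: a uniform variance bound.} We show that for $g$ of class $C^L$,
\[
\sup_N \operatorname{Var}\big(\Tr g(X_i^N)\big)\le C\,\|g\|_{C^{L}}^2 .
\]
We apply the expansion with $r=2$ and $m=2$ to $\E[\tr g(X_i^N)\tr g(X_i^N)]$, and with $r=1$ and $m=2$ to $\E[\tr g(X_i^N)]^2$. The variance is $N^2$ times the difference of these two quantities. By Theorem \ref{thm:CumulantSmooth} with $r=2$, the orders $N^0$ and $N^{-1}$ cancel on polynomials, and hence by density (Lemma \ref{lemma:extensionBoundedMultilinear}) they cancel for all admissible $g$. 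The remainder is then $O(N^{-2})$, with constant bounded by $\|F\|_{C^{4}}$ in the Gaussian case, which needs $g\in C^5$ by Remark \ref{remark:C^M_GUE}. In the Haar case the constant is bounded by $\|F\|_{C^{5}}$ after choosing $u$ small, which needs $g\in C^6$ by Remark \ref{remark:C^M_unitary}. This is the origin of $L=5$ and $L=6$. The bound is linear in the norm squared: the remainder estimate is a bounded bilinear form in $(g,g)$, so the bound also holds for differences $g-\tilde g$.

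\textbf{Step 2: approximation.} We fix $h_i\in C^L$ and choose $h_{i,n}\in C^\infty$ (mollifications) with $\|h_i-h_{i,n}\|_{C^L([-K_i-1,K_i+1])}\to0$. Let $Y^N_{i,n}$ denote the centred trace of $h_{i,n}$. By Step 1,
\[
\sup_N\E|Y^N_i-Y^N_{i,n}|^2\le C\|h_i-h_{i,n}\|_{C^L}^2\to0 .
\]
For each fixed $n$, Theorem \ref{thm:CLT} gives $Y^N_{\cdot,n}\Rightarrow\mathcal N(0,\Sigma^{(n)})$. Moreover $\Sigma^{(n)}$ is Cauchy, since the limiting covariance is a bounded bilinear form in the $C^L$ norm by the Step 1 estimate. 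Call its limit $\Sigma$.

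\textbf{Step 3: conclusion.} We use a standard approximation lemma (Billingsley, Thm.~3.2). Its hypotheses are that $Y_n^N\Rightarrow Z_n$ as $N\to\infty$, that $Z_n\Rightarrow Z$ as $n\to\infty$, and that $\lim_n\limsup_N\P(|Y^N-Y^N_n|>\varepsilon)=0$ for every $\varepsilon>0$. The first holds by Theorem \ref{thm:CLT}, the second because $\mathcal N(0,\Sigma^{(n)})\Rightarrow\mathcal N(0,\Sigma)$, and the third by Chebyshev's inequality together with Step 2. The lemma then gives $Y^N\Rightarrow\mathcal N(0,\Sigma)$.

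It remains to identify $\Sigma_{ij}$ with $\lim_N\operatorname{Cov}(Y_i^N,Y_j^N)$. This follows from the same $L^2$ estimate together with the Cauchy–Schwarz inequality.

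The main obstacle is Step 1. We must check that the cancellation of the $N^0$ and $N^{-1}$ coefficients survives for $C^L$ functions, and that the $m=2$ remainder needs only $C^{L}$ regularity rather than $C^\infty$. This requires tracking the regularity indices in Remarks \ref{remark:C^M_GUE} and \ref{remark:C^M_unitary} carefully.
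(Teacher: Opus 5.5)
Your proposal is correct and follows essentially the same route as the paper: mollify to $C^\infty$, bound the variance of the difference uniformly in $N$ via the $m=2$ expansion (with the $N^2$ and $N$ coefficients cancelling, which is where $L=5$ and $L=6$ come from), then combine Chebyshev's inequality with Theorem \ref{thm:CLT} for the approximants and pass to the limit in the covariance. Two points are handled more carefully in your version. You extend the cancellation to $C^L$ by density, rather than quoting Theorem \ref{thm:CumulantSmooth}, whose hypotheses nominally require $C^{4r-1}$ or $C^{4r}$. You also invoke the approximation theorem (Billingsley) explicitly, which the paper uses only implicitly.
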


\begin{proof}
For any positive integer $L$ and any $\varepsilon >0$, by a standard density argument using mollifiers, we can approximate the $C^L$ functions $h_i$ by smooth functions $h_i^\varepsilon \in C^\infty$, so that 
\begin{align*}
\|h_i - h_i^\varepsilon\|_{C^L([-K_i,K_i])} < \varepsilon.
\end{align*}
for each $i=1,...,r$. 
(Specifically, one can check this fact observing $h_i^\varepsilon := h_i * \rho_\delta$ with a suitable mollifier $\rho_\delta$. 
We also note that in the GUE case, since $h_i$ is bounded, this approximation can be done while preserving the uniform bound $\|\cdot\|_{(-\infty, \infty)}$.) 

Let us denote by $\bm{Y}^N(h_1, ..., h_r)$ the random vector defined by
\begin{align}\label{eq:def-Y}
\bm{Y}^N(h_1, ..., h_r)
&=(Y_1^N(h_1),...,Y_r^N(h_r)) \notag \\
&:= (\Tr h_1(X_1^N)-\E[\Tr h_1(X_1^N)], ..., \Tr h_r(X_r^N)-\E[\Tr h_r(X_r^N)]).
\end{align}

Since we have 
\begin{align*}
\bm{Y}^N(h_1, ..., h_r) - \bm{Y}^N(h_1^\varepsilon, ..., h_r^\varepsilon) 
= \bm{Y}^N((h_1 - h_1^\varepsilon), ..., (h_r - h_r^\varepsilon)),
\end{align*}
to show that $\bm{Y}^N(h_1, ..., h_r)$ converges in distribution to $\mathcal{N}(\bm{0}, \Sigma)$ as $N \to \infty$, it suffices to prove the following:\\
(I) the vanishing of the error term, 
\begin{align}\label{conv:error-vanish}
\bm{Y}^N((h_1 - h_1^\varepsilon), ..., (h_r - h_r^\varepsilon)) \xrightarrow[\varepsilon \to 0]{} \bm{0}
\end{align}
in distribution, uniformly in $N$ for each $i=1,...,r$\\
(II) the convergence of the approximated vector to a Gaussian vector, namely,
\begin{align}\label{conv:approx-CLT}
\bm{Y}^N(h_1^\varepsilon, ..., h_r^\varepsilon) \xrightarrow[N \to \infty]{} \mathcal{N}(\bm{0}, \Sigma_{h_1^\varepsilon,...,h_r^\varepsilon}),
\end{align}
where $\Sigma_{h_1^\varepsilon,...,h_r^\varepsilon}$ is the covariance matrix, and in addition, the covariance matrix $\Sigma_{h_1^\varepsilon,...,h_r^\varepsilon}$ converges to $\Sigma$ as $\varepsilon \to 0$.

(I) is shown as follows.
Since for any $\delta>0$ and each $i=1,...,r$, we have
\begin{align*}
\limsup_{N \in \mathbb{N}} \mathbb{P} \left( |Y_i^N (h_i-h_i^\varepsilon)| > \delta \right)
\leq \frac{1}{\delta^2} \limsup_{N \in \mathbb{N}} \mathrm{Var}(Y_i^N (h_i-h_i^\varepsilon)),
\end{align*}
by Chebyshev's inequality, it is enough to show that the variance on the right-hand side converges to zero as $\varepsilon \to 0$ uniformly in $N$ for each $i=1,...,r$.
One can prove it in a similar manner to the proof of Theorem \ref{thm:CumulantSmooth}. We only give the proof 
in the GUE case, as Gaussian ensembles with different symmetry classes, and Haar distributed ensembles can be shown in the same way.

When $h_i \in C^5(\R)$ (the case $L=5$)
, by the $1/N$-expansion in Theorem \ref{thm:ExpansionSmoothGUE}
, we can see that the variance is bounded uniformly in $N$ as
\begin{align*}
&\limsup_{N \in \N} \mathrm{Var}(Y_i^N(h_i - h_i^\varepsilon))
= \limsup_{N \in \N} C_2 (\Tr (h_i - h_i^\varepsilon)(X_i^N), \Tr (h_i - h_i^\varepsilon)(X_i^N))\\
&= \limsup_{N \in \N} N^{2} \left\{\E \left[\tr (h_i - h_i^\varepsilon)(X_i^N) \tr (h_i - h_i^\varepsilon)(X_i^N)\right] - \E \left[\tr (h_i - h_i^\varepsilon)(X_i^N)\right] \E \left[\tr (h_i - h_i^\varepsilon)(X_i^N)\right]\right\}\\
&= \limsup_{N \in \N} N^{2} \left\{\left(\nu_0(h_i - h_i^\varepsilon, h_i - h_i^\varepsilon)+\frac{\nu_1(h_i - h_i^\varepsilon, h_i - h_i^\varepsilon)}{N}+\frac{a_2(h_i - h_i^\varepsilon, h_i - h_i^\varepsilon)(N)}{N^2}\right)\right. \\
& \qquad \qquad \qquad \left.-\left(\nu_0(h_i - h_i^\varepsilon)+\frac{\nu_1(h_i - h_i^\varepsilon)}{N}+\frac{a_2(h_i - h_i^\varepsilon)(N)}{N^2}\right)^2\right\}\\
&= \limsup_{N \in \N} \left\{\left(\nu_0(h_i - h_i^\varepsilon, h_i - h_i^\varepsilon)-\nu_0(h_i - h_i^\varepsilon)^2 \right)N^2 \right.\\
& \qquad \qquad \qquad +\left(\nu_1(h_i - h_i^\varepsilon, h_i - h_i^\varepsilon)-2\nu_0(h_i - h_i^\varepsilon)\nu_1(h_i - h_i^\varepsilon)\right)N\\
& \qquad \qquad \qquad \left. + a_2(h_i - h_i^\varepsilon, h_i - h_i^\varepsilon)(N)-2\nu_0(h_i - h_i^\varepsilon)a_2(h_i - h_i^\varepsilon)(N)-\nu_1(h_i - h_i^\varepsilon)^2+O(1/N)\right\}\\
&\leq C \| (h_i - h_i^\varepsilon)^{(1)} \|_{C^{4}([-K_i,K_i])}^2
\leq C \varepsilon^2,
\end{align*}
where $C$ is a universal constant independent of $N, \varepsilon$, and $\nu_i$s are those in Theorem \ref{thm:ExpansionSmoothGUE} and $a_2(h_1,...,h_r)(N):=a_2^{[r]}(h_1,...,h_r)(N)$s are those in the proof of Theorem \ref{thm:CumulantSmooth}.
Here, the last two inequalities follow, from Theorem \ref{thm:CumulantSmooth}, the coefficients of $N^2$ and $N$ vanish, and from \eqref{ineq:nu_m_Bound} and \eqref{ineq:a-bound}, the remaining term is bounded by $\| (h_i - h_i^\varepsilon)^{(1)} \|_{C^{4}([-K_i,K_i])}^2$, which completes the proof of \eqref{conv:error-vanish}.

(II) is a consequence of Theorem \ref{thm:CLT}.
By Theorem \ref{thm:CLT}, we know that $\bm{Y}^N(h_1^\varepsilon, ..., h_r^\varepsilon)$ converges in distribution as $N \to \infty$ to a centered Gaussian random vector $\mathcal{N}(\bm{0}, \Sigma_{h_1^\varepsilon,...,h_r^\varepsilon})$. 
Also the covariance matrix $\Sigma_{h_1^\varepsilon,...,h_r^\varepsilon}$ is 
\begin{align*}
&(\Sigma_{h_1^\varepsilon,...,h_r^\varepsilon})_{i,j}
=\lim_{N \to \infty} \operatorname{Cov}(Y_i^N(h_i^\varepsilon), Y_j^N(h_j^\varepsilon))
=\lim_{N \to \infty} C_2(\Tr h_i^\varepsilon(X_i^{N}), \Tr h_j^\varepsilon(X_j^{N}))\\
&\quad \xrightarrow[\varepsilon \to 0]{} \lim_{N \to \infty} C_2(\Tr h_i(X_i^{N}), \Tr h_j(X_j^{N}))
=:\Sigma_{i,j} < \infty,
\end{align*}
where the boundedness and the convergence as $\varepsilon \to 0$ follow from the same argument as in (I) above, 
which completes the proof of (II).
\end{proof}

\subsection{Matrix integrals}

We consider the matrix integrals of the form
\begin{align}
I_N (V)
:=\int
\e^{N \Tr V(U_1,...,U_d,U_1^*,...,U_d^*)} \d U_1\cdots \d U_d,
\end{align}
and we define the free energy as
\begin{align}
F_N (V)
:=\frac{1}{N^2} \log I_N (V),
\end{align}
where $V$ is a self-adjoint polynomial in $2d$ non-commutative indeterminates.
The integration is performed with respect to the product measure $\d U_1 \cdots \d U_d$, where each $\d U_i$ represents the reference probability measure associated with a specific ensemble. Specifically, we consider $\d U_i$ to be either Haar-distributed matrices in $\U(N),\Ort(N), \Sp(N)$, or GUEs, GOEs, GSEs. (In the cases of those Gaussian ensembles, they are self-adjoint, so the dependence on $U_i^*$ in $V$ is equivalent to a dependence on $U_i$.)

Note that the free energy $F_N(V)$ is well-defined since $\Tr V$ is real-valued.
If one replaces $V$ by $tV$ for $t$ a real parameter, it is real analytic in $t$ for sufficiently small $t$.

Let $\mu_V^N$ be the probability measure 
defined by
\begin{align}
\d\mu_V^N (U_1,...,U_d)
:=\frac{1}{I_N (V)} \e^{N \Tr V(U_1,...,U_d,U_1^*,...,U_d^*)} \d U_1\cdots \d U_d.
\end{align}
For any self-adjoint polynomial $P$ in $2d$ non-commutative indeterminates, we define the expectation with respect to $\mu_V^N$ by
\begin{align}
\E_{\mu_V^N} [\tr P]
&:=\int
\tr P(U_1,...,U_d,U_1^*,...,U_d^*) \d \mu_V^N (U_1,...,U_d) \notag \\
&=\frac{\int \frac{1}{N} \Tr P \e^{N \Tr V} \d U_1\cdots \d U_d}{\int \e^{N \Tr V} \d U_1\cdots \d U_d}.
\end{align}

The asymptotic expansion of these matrix integrals has been studied for various ensembles when $V$ is a non-commutative polynomial. 
A fundamental property of these integrals is the existence of a $1/N$ expansion (or topological expansion) of the free energy $F_N (V)$. 
In the case of Gaussian matrices, the expansion was proved in \cite{MaurelSegala06}, \cite{GMS06_MR2249657}, by using the Schwinger-Dyson equations and combinatorial techniques. 
In the case of Haar-distributed matrices, the expansion was obtained relying on the Weingarten calculus as in \cite{Collins2003}, \cite{CS06_MR2217291}.
While stronger results establishing the analytic convergence of these series (i.e., a positive radius of convergence) have been obtained for polynomials \cite{CGMS09_MR2531371}, \cite{GN15_MR3331788}, \cite{Buc-dAlche2023}, we focus here on extending the existence of the topological expansion to the case of smooth potentials. 
In the following theorem, using the smooth asymptotic expansion theorems (e.g. Theorem \ref{thm:ExpansionSmoothGUE}, Theorem \ref{thm:ExpansionSmoothUnitary}), we establish that the asymptotic coefficients exist for smooth test functions, though we do not address the analytic convergence of the resulting formal series.

\begin{theorem}[Matrix integrals with smooth test functions]
Let $r$ be a fixed positive integer. We fix self-adjoint non-commutative polynomials $P_i \in \C \langle x_1,...,x_d,x_1^*,...,x_d^* \rangle \ (i=1,...,r)$ and $Q \in \C \langle x_1,...,x_d,x_1^*,...,x_d^* \rangle$.
For any smooth functions $h_1,...,h_r \in C^\infty (\R)$, (and in the Gaussian ensemble cases, we additionally assume boundedness), we define 
\begin{align*}
V=V_{\bm t}=t_1 h_1 (P_1)+\cdots +t_r h_r (P_r),
\end{align*}
for $\bm{t}=(t_1,...,t_r)$.
Then, for the matrix integrals defined above, the following holds:\\
(1)
The formal power series
\begin{align}
F_N (V)
=\sum_{\bm{k}\in \Z_{\geq 0}^r \setminus \{\bm{0}\}} a_{\bm{k}}^N \bm{t}^{\bm{k}},
\end{align}
is that for each $\bm{k} \in \Z_{\geq 0}^r \setminus \{\bm{0}\}$, the limit
\begin{align*}
\lim_{N \to \infty} a_{\bm{k}}^N
\end{align*}
exists and is finite, and it depends only on $V$.\\
(2)
For any smooth function $g \in C^\infty (\R)$, (and in the Gaussian ensemble cases, we additionally assume boundedness),the formal power series
\begin{align}
\E_{\mu_{V_{\bm t}}^N} [\tr g(Q)]
=\sum_{\bm{k}\in \Z_{\geq 0}^r} b_{\bm{k}}^N \bm{t}^{\bm{k}} ,
\end{align}
is that for each $\bm{k} \in \Z_{\geq 0}^r$, the limit
\begin{align*}
\lim_{N \to \infty} b_{\bm{k}}^N
\end{align*}
exists and is finite, and it depends only on $V$, $g$ and $Q$.
\end{theorem}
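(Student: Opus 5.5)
The plan is to identify the Taylor coefficients of $F_N(V_{\bm t})$ in $\bm t$ with scaled joint cumulants of linear statistics, and then to read off their limits from Theorem \ref{thm:CumulantSmooth}. Set $Z_i^N := \Tr h_i(P_i(U_1,\dots,U_d,U_1^*,\dots,U_d^*))$, so that $N\Tr V_{\bm t} = \sum_i t_i N Z_i^N$. In the Haar cases each $Z_i^N$ is bounded by $N\|h_i\|_{[-K_i,K_i]}$. In the Gaussian cases, boundedness of the $h_i$ gives $|Z_i^N|\le N\|h_i\|_{(-\infty,\infty)}$. Hence $I_N(V_{\bm t}) = \E[\exp(\sum_i t_i N Z_i^N)]$ is an entire function of $\bm t$ that equals $1$ at $\bm t=0$, so $\log I_N$ is analytic near $0$. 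Applying \eqref{eq:CumulantGenerating_multi} to $Y_i = N Z_i^N$ yields
\begin{align*}
a_{\bm k}^N = \frac{1}{N^2\,\bm k!}\, C_{\bm k}(N Z_1^N,\dots,N Z_r^N),
\end{align*}
and since $N Z_i^N = N\Tr h_i(X_i^N)$, this is exactly $\frac{1}{\bm k!}N^{-2}C_{|\bm k|}(N\Tr h_{i_1}(X_{i_1}^N),\dots)$ with repeated entries. Theorem \ref{thm:CumulantSmooth}, applied with $|\bm k|$ test functions taken from $\{h_1,\dots,h_r\}$ with repetition, gives the existence and finiteness of $\lim_N a_{\bm k}^N$ and proves (1). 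There is one point to verify: that theorem is stated for a tuple $(h_i, X_i)$ of arbitrary length, and it covers repeated entries because its proof uses only multilinearity and the expansions of Theorems \ref{thm:ExpansionSmoothGUE}/\ref{thm:ExpansionSmoothUnitary} for each sub-block. The limit depends only on the $h_i,P_i$, hence on $V$.

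For (2), I will write $\E_{\mu_V^N}[\tr g(Q)] = \E[\tr g(Q)\,e^{N\Tr V}]/\E[e^{N\Tr V}]$. This is the derivative at $s=0$ of $\log\E[\exp(sN\Tr g(Q)/N^{2}\cdot N + \dots)]$, which is cleaner in cumulant form: adjoin the extra variable $Y_0 = \Tr g(Q)$, i.e. $\tr g(Q)=N^{-1}Y_0$. Differentiating the generating function in $s$ at $s=0$ gives
\begin{align*}
b_{\bm k}^N = \frac{1}{\bm k!}\, N^{-1} C_{1+|\bm k|}\bigl(\Tr g(Q), N\Tr h_{\cdot}(X_\cdot^N),\dots\bigr) = \frac{1}{\bm k!}\, N^{-2} C_{1+|\bm k|}\bigl(N\Tr g(Q), N\Tr h_{\cdot},\dots\bigr)\cdot N^{0},
\end{align*}
after checking the powers of $N$: $\tr g = N^{-2}(N\Tr g)$, and each $t$-derivative brings one factor $N\cdot N\Tr h = N^2 \tr h\cdot N$. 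I will recompute this bookkeeping carefully. The intended outcome is that $b_{\bm k}^N$ is precisely of the form $N^{-2}C_{n}(N\Tr \cdot,\dots)$ with $n=1+|\bm k|$, so Theorem \ref{thm:CumulantSmooth} again gives the limit. For $\bm k=0$ the limit is $\lim\E[\tr g(Q)]$, which exists by Theorem \ref{thm:ExpansionSmoothGUE} or Theorem \ref{thm:ExpansionSmoothUnitary} with $r=1$.

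The main obstacle is the precise power-of-$N$ bookkeeping in the cumulant identification. I need to check that the scaling $N\Tr V$ in the exponent, combined with the normalizations $N^{-2}$ and $\tr$, lands exactly on the normalization $N^{-2}C_n(N\Tr,\dots,N\Tr)$ of Theorem \ref{thm:CumulantSmooth}. If there is a mismatch, it shows up as an extra factor $N^{\pm1}$, and I will compensate for it by extracting one more term in the expansion. That theorem's proof shows the full $1/N$ expansion of $N^{2n-2}C_n(\tr,\dots)$ has vanishing coefficients for all negative powers, so a lower-order coefficient's existence follows from the same argument with $m$ increased by one. A secondary issue is that the smooth $C^\infty$ hypothesis, together with boundedness in the Gaussian case, is needed to invoke that theorem for arbitrarily large cumulant order $n$, since the required regularity $C^{4n}$ grows with $n$.
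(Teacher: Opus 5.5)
Your proposal is correct and is essentially the paper's proof: the paper likewise reads off $a_{\bm k}^N=\frac{1}{N^2\,\bm k!}C_{\bm k}(N\Tr h_1(X_1^N),\dots,N\Tr h_r(X_r^N))$ from \eqref{eq:CumulantGenerating_multi}, writes $\E_{\mu_V^N}[\tr g(Q)]=\partial_s|_{s=0}F_N(s\,g(Q)+V)$ to get $b_{\bm k}^N=\frac{1}{N^2\,\bm k!}C_{1,\bm k}(N\Tr g(Q),N\Tr h_1(X_1^N),\dots,N\Tr h_r(X_r^N))$, and concludes by Theorem \ref{thm:CumulantSmooth}. Your displayed formula for $b_{\bm k}^N$ is already the right one: each $\partial_{t_i}$ brings down $N\Tr h_i$ (not $N\cdot N\Tr h_i$), and $N\Tr g=N^2\tr g$, so no power-of-$N$ mismatch arises and the fallback of extracting one more term is unnecessary. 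This is just as well, since that fallback could not absorb an extra factor $N^{+1}$.
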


\begin{proof}
(1)
We set $X_i^N:=P_i(U_1^N,...,U_d^N,U_1^{N*},...,U_d^{N*})$ for $\bm{U}^N = (U_1^N,...,U_d^N)$.
Then, we have
\begin{align}
F_N (V)
&=\frac{1}{N^2} \log \E\left[\e^{N \Tr \left(t_1 h_1 (X_1^N)+\cdots +t_r h_r (X_r^N)\right)}\right] \notag \\
&=\sum_{\bm{k}\in \Z_{\geq 0}^r \setminus \{\bm{0}\}} \frac{1}{N^2 \bm{k}!} C_{\bm{k}}(N \Tr h_1(X_1^N),...,N \Tr h_r(X_r^N)) \bm{t}^{\bm{k}},
\end{align}
where we used the relation between cumulants and moment generating functions, \eqref{eq:CumulantGenerating_multi}, in the last equality. 
Note that the radius of convergence of the free energy $F_N(V)$ a priori depends on $N$, but the above expansion can be  considered as a formal power series in $\bm{t}$.
Thus for $\bm{k} \in \Z_{\geq 0}^r \setminus \{\bm{0}\}$, we may set
\begin{align}
a_{\bm{k}}^N :=\frac{1}{N^2 \bm{k}!} C_{\bm{k}}(N \Tr h_1(X^N),...,N \Tr h_r(X^N)),
\end{align}
and by Theorem \ref{thm:CumulantSmooth}, the limit as $N \to \infty$ exists and is finite.\\
(2)
We also set $Y^N:=Q(U_1^N,...,U_d^N,U_1^{N*},...,U_d^{N*})$.
Then, for fixed $N$, there exists a neighborhood of $\bm{t}=\bm{0}$ such that for any $\bm{t}$ in this neighborhood, we have 
\begin{align}
&\E_{\mu_{V_{\bm t}}^N} [\tr g(Q)] \notag \\
&=\frac{\E\left[\frac{1}{N} \Tr g(Y^N) \e^{N \Tr \left(t_1 h_1 (X_1^N)+\cdots +t_r h_r (X_r^N)\right)}\right]}{\E\left[\e^{N \Tr \left(t_1 h_1 (X_1^N)+\cdots +t_r h_r (X_r^N)\right)}\right]} \notag \\
&=\left.\frac{\partial}{\partial s}\right|_{s=0} F_N (s g(Q)+V) \notag \\
&=\left.\frac{\partial}{\partial s}\right|_{s=0} \sum_{(k_0,\bm{k})\in \Z_{\geq 0}^{r+1} \setminus \{\bm{0}\}} \frac{1}{N^2} C_{k_0, \bm{k}}(N \Tr g(Y^N),N \Tr h_1(X^N),...,N \Tr h_r(X^N)) \frac{s^{k_0} \bm{t}^{\bm{k}}}{k_0! \bm{k}!} \notag \\
&=\sum_{\bm{k}\in \Z_{\geq 0}^r} \frac{1}{N^2 \bm{k}!} C_{1,\bm{k}}(N \Tr g(Y^N), N \Tr h_1(X^N),...,N \Tr h_r(X^N)) \bm{t}^{\bm{k}},
\end{align}
as a formal power series, where we again used \eqref{eq:CumulantGenerating_multi} in the second to the last equality.
Thus for $\bm{k} \in \Z_{\geq 0}^r$, we may set
\begin{align}
b_{\bm{k}}^N =\frac{1}{N^2 \bm{k}!} C_{1,\bm{k}}(N \Tr g(Y^N), N \Tr h_1(X^N),...,N \Tr h_r(X^N)),
\end{align}
and again by Theorem \ref{thm:CumulantSmooth}, the limit as $N \to \infty$ exists and is finite.
\end{proof}

\begin{remark}[Regularization of formal matrix integrals]We conclude with an observation on the relevance of smooth test functions to formal matrix integrals appearing in combinatorics and theoretical physics. Consider, for instance, the cubic matrix model, formally given by the partition function:\begin{align*}I_N(\varepsilon) = \int \e^{-N \Tr\left(\frac{1}{2}X^2 + \varepsilon X^3\right)} \d X.\end{align*}It is well known that this integral diverges for any $\varepsilon \neq 0$ and does not exist in the Lebesgue sense, as the term $\varepsilon x^3$ dominates the Gaussian term $-x^2/2$ at infinity. Consequently, the formal power series in $\varepsilon$ (whose coefficients enumerate maps) cannot be directly interpreted as an asymptotic expansion of a well-defined integral. In their seminal work, Guionnet and Maurel-Segala \cite{GMS06_MR2249657} circumvented this difficulty by restricting the integration to a compact set of matrices (truncation) and proving that the asymptotic expansion is independent of the cutoff, provided it is sufficiently large. Our framework offers an alternative, ``smooth regularization'' approach. Instead of considering the unbounded polynomial potential $V(x) = x^3$, one may replace it with a smooth, bounded function $h \in C_b^\infty(\mathbb{R})$ such that $h(x) = x^3$ on a sufficiently large neighborhood of $[-2,2]$ (the support of the limiting spectral measure). With this substitution, the regularized integral\begin{align*}\tilde{I}_N(\varepsilon) = \int \e^{-N \Tr\left(\frac{1}{2}X^2 + \varepsilon h(X)\right)} \d X\end{align*}is well-defined in the Lebesgue sense for any fixed $N$, without requiring explicit domain truncation, as the Gaussian term dominates the bounded perturbation $h$. Theorem \ref{thm:CumulantSmooth} then ensures that the coefficients of the expansion of the free energy associated to $\tilde{I}_N(\varepsilon)$ admit well-defined limits as $N \to \infty$. Since $h$ coincides with the polynomial potential on the support of the spectrum, these limits coincide with the combinatorial quantities of interest.\end{remark}

\bibliography{ref} 

\begin{thebibliography}{CGVTvH25}

\bibitem[BD17]{BD17_MR3556288}
Jonathan Breuer and Maurice Duits.
\newblock Central limit theorems for biorthogonal ensembles and asymptotics of recurrence coefficients.
\newblock {\em J. Amer. Math. Soc.}, 30(1):27--66, 2017.

\bibitem[Bd24]{Buc-dAlche2023}
Thomas Buc-d'Alch{\'e}.
\newblock Topological expansion of unitary integrals and maps, 2024.
\newblock To appear in \emph{Ann. Inst. Henri Poincar\'e D}. \url{https://arxiv.org/abs/2304.12785}.

\bibitem[BH23]{BaoHe23_MR4677731}
Zhigang Bao and Yukun He.
\newblock Quantitative {CLT} for linear eigenvalue statistics of {W}igner matrices.
\newblock {\em Ann. Appl. Probab.}, 33(6B):5171--5207, 2023.

\bibitem[BIPZ78]{BIPZ78_MR471676}
E.~Br\'ezin, C.~Itzykson, G.~Parisi, and J.~B. Zuber.
\newblock Planar diagrams.
\newblock {\em Comm. Math. Phys.}, 59(1):35--51, 1978.

\bibitem[BP09]{BP09_MR2480549}
W\l{}odzimierz Bryc and Virgil Pierce.
\newblock Duality of real and quaternionic random matrices.
\newblock {\em Electron. J. Probab.}, 14:no. 17, 452--476, 2009.

\bibitem[CES23]{CES23_MR4551555}
Giorgio Cipolloni, L\'aszl\'o{} Erd\"os, and Dominik Schr\"oder.
\newblock Functional central limit theorems for {W}igner matrices.
\newblock {\em Ann. Appl. Probab.}, 33(1):447--489, 2023.

\bibitem[CGMS09]{CGMS09_MR2531371}
Beno\^it Collins, Alice Guionnet, and Edouard Maurel-Segala.
\newblock Asymptotics of unitary and orthogonal matrix integrals.
\newblock {\em Adv. Math.}, 222(1):172--215, 2009.

\bibitem[CGVTvH25]{vH_newapproach1}
Chi-Fang Chen, Jorge Garza-Vargas, Joel~A. Tropp, and Ramon van Handel.
\newblock A new approach to strong convergence, 2025.
\newblock To appear in \emph{Ann. of Math. (2)}. \url{https://arxiv.org/abs/2405.16026}.

\bibitem[CGVvH24]{VH_newapproach2}
Chi-Fang Chen, Jorge Garza-Vargas, and Ramon van Handel.
\newblock A new approach to strong convergence ii. the classical ensembles, 2024.

\bibitem[Cho80]{Choi1980_MR590864}
Man~Duen Choi.
\newblock The full {$C\sp{\ast} $}-algebra of the free group on two generators.
\newblock {\em Pacific J. Math.}, 87(1):41--48, 1980.

\bibitem[CJ24]{CJ24_MR4718383}
Klara Courteaut and Kurt Johansson.
\newblock Multivariate normal approximation for traces of orthogonal and symplectic matrices.
\newblock {\em Ann. Inst. Henri Poincar\'e{} Probab. Stat.}, 60(1):312--342, 2024.

\bibitem[CMSS07]{CMSS07}
Beno\^it Collins, James~A. Mingo, Piotr \'Sniady, and Roland Speicher.
\newblock Second order freeness and fluctuations of random matrices. {III}. {H}igher order freeness and free cumulants.
\newblock {\em Doc. Math.}, 12:1--70, 2007.

\bibitem[Col03]{Collins2003}
Beno^^c3^^aet Collins.
\newblock Moments and cumulants of polynomial random variables on unitarygroups, the itzykson-zuber integral, and free probability.
\newblock {\em International Mathematics Research Notices}, 2003(17):953--982, 2003.

\bibitem[CS06]{CS06_MR2217291}
Beno\^it Collins and Piotr \'Sniady.
\newblock Integration with respect to the {H}aar measure on unitary, orthogonal and symplectic group.
\newblock {\em Comm. Math. Phys.}, 264(3):773--795, 2006.

\bibitem[DM23]{DM23_MR4552697}
Mario Diaz and James~A. Mingo.
\newblock On the analytic structure of second-order non-commutative probability spaces and functions of bounded {F}r\'echet variation.
\newblock {\em Random Matrices Theory Appl.}, 12(1):Paper No. 2250044, 26, 2023.

\bibitem[EM03]{EM03_MR1953782}
N.~M. Ercolani and K.~D. T.-R. McLaughlin.
\newblock Asymptotics of the partition function for random matrices via {R}iemann-{H}ilbert techniques and applications to graphical enumeration.
\newblock {\em Int. Math. Res. Not.}, (14):755--820, 2003.

\bibitem[GMS06]{GMS06_MR2249657}
Alice Guionnet and Edouard Maurel-Segala.
\newblock Combinatorial aspects of matrix models.
\newblock {\em ALEA Lat. Am. J. Probab. Math. Stat.}, 1:241--279, 2006.

\bibitem[GN15]{GN15_MR3331788}
Alice Guionnet and Jonathan Novak.
\newblock Asymptotics of unitary multimatrix models: the {S}chwinger-{D}yson lattice and topological recursion.
\newblock {\em J. Funct. Anal.}, 268(10):2851--2905, 2015.

\bibitem[Gui02]{Guionnet2002_MR1899457}
Alice Guionnet.
\newblock Large deviations upper bounds and central limit theorems for non-commutative functionals of {G}aussian large random matrices.
\newblock {\em Ann. Inst. H. Poincar\'e{} Probab. Statist.}, 38(3):341--384, 2002.

\bibitem[HTr12]{HT12_MR2922846}
Uffe Haagerup and Steen Thorbj\o~rnsen.
\newblock Asymptotic expansions for the {G}aussian unitary ensemble.
\newblock {\em Infin. Dimens. Anal. Quantum Probab. Relat. Top.}, 15(1):1250003, 41, 2012.

\bibitem[HZ86]{HarerZagier86_MR848681}
J.~Harer and D.~Zagier.
\newblock The {E}uler characteristic of the moduli space of curves.
\newblock {\em Invent. Math.}, 85(3):457--485, 1986.

\bibitem[JL21]{JL21_MR4348683}
Kurt Johansson and Gaultier Lambert.
\newblock Multivariate normal approximation for traces of random unitary matrices.
\newblock {\em Ann. Probab.}, 49(6):2961--3010, 2021.

\bibitem[Joh98]{Johansson1998_MR1487983}
Kurt Johansson.
\newblock On fluctuations of eigenvalues of random {H}ermitian matrices.
\newblock {\em Duke Math. J.}, 91(1):151--204, 1998.

\bibitem[LP09]{LP09_MR2561434}
A.~Lytova and L.~Pastur.
\newblock Central limit theorem for linear eigenvalue statistics of random matrices with independent entries.
\newblock {\em Ann. Probab.}, 37(5):1778--1840, 2009.

\bibitem[Mat13]{Matsumoto13_MR3077830}
Sho Matsumoto.
\newblock Weingarten calculus for matrix ensembles associated with compact symmetric spaces.
\newblock {\em Random Matrices Theory Appl.}, 2(2):1350001, 26, 2013.

\bibitem[MMPS22]{CMPS22_MR4414193}
Camile Male, James~A. Mingo, Sandrine P\'ech\'e, and Roland Speicher.
\newblock Joint global fluctuations of complex {W}igner and deterministic matrices.
\newblock {\em Random Matrices Theory Appl.}, 11(2):Paper No. 2250015, 46, 2022.

\bibitem[MP13]{MP13_MR3098908}
James~A. Mingo and Mihai Popa.
\newblock Real second order freeness and {H}aar orthogonal matrices.
\newblock {\em J. Math. Phys.}, 54(5):051701, 35, 2013.

\bibitem[MP24]{MP24_MR4700374}
Michael Magee and Doron Puder.
\newblock Matrix group integrals, surfaces, and mapping class groups {II}: {${\rm O}(n)$} and {${\rm Sp}(n)$}.
\newblock {\em Math. Ann.}, 388(2):1437--1494, 2024.

\bibitem[MS06a]{MaurelSegala06}
Edouard Maurel-Segala.
\newblock High order expansion of matrix models and enumeration of maps.
\newblock 09 2006.

\bibitem[MS06b]{MS06}
James~A. Mingo and Roland Speicher.
\newblock Second order freeness and fluctuations of random matrices. {I}. {G}aussian and {W}ishart matrices and cyclic {F}ock spaces.
\newblock {\em J. Funct. Anal.}, 235(1):226--270, 2006.

\bibitem[MS06c]{MS06_MR2216446}
James~A. Mingo and Roland Speicher.
\newblock Second order freeness and fluctuations of random matrices. {I}. {G}aussian and {W}ishart matrices and cyclic {F}ock spaces.
\newblock {\em J. Funct. Anal.}, 235(1):226--270, 2006.

\bibitem[MS17]{MR3585560_MingoSpeicher17}
James~A. Mingo and Roland Speicher.
\newblock {\em Free probability and random matrices}, volume~35 of {\em Fields Institute Monographs}.
\newblock Springer, New York; Fields Institute for Research in Mathematical Sciences, Toronto, ON, 2017.

\bibitem[MSS07]{MSS07_MR2294222}
James~A. Mingo, Piotr \'Sniady, and Roland Speicher.
\newblock Second order freeness and fluctuations of random matrices. {II}. {U}nitary random matrices.
\newblock {\em Adv. Math.}, 209(1):212--240, 2007.

\bibitem[NS06]{NS06_MR2266879}
Alexandru Nica and Roland Speicher.
\newblock {\em Lectures on the combinatorics of free probability}, volume 335 of {\em London Mathematical Society Lecture Note Series}.
\newblock Cambridge University Press, Cambridge, 2006.

\bibitem[Par23a]{parraud2023_unitary}
F^^c3^^a9lix Parraud.
\newblock Asymptotic expansion of smooth functions in deterministic and iid haar unitary matrices, and application to tensor products of matrices, 2023.

\bibitem[Par23b]{Parraud23_MR4567374}
F\'elix Parraud.
\newblock Asymptotic expansion of smooth functions in polynomials in deterministic matrices and iid {GUE} matrices.
\newblock {\em Comm. Math. Phys.}, 399(1):249--294, 2023.

\bibitem[Red14]{Redelmeier14_MR3217665}
Catherine Emily~Iska Redelmeier.
\newblock Real second-order freeness and the asymptotic real second-order freeness of several real matrix models.
\newblock {\em Int. Math. Res. Not. IMRN}, (12):3353--3395, 2014.

\bibitem[Rek24]{Reker24_MR4842843}
Jana Reker.
\newblock Multi-point functional central limit theorem for {W}igner matrices.
\newblock {\em Electron. J. Probab.}, 29:Paper No. 191, 49, 2024.

\bibitem[Rot64]{Rota64_MR174487}
Gian-Carlo Rota.
\newblock On the foundations of combinatorial theory. {I}. {T}heory of {M}\"obius functions.
\newblock {\em Z. Wahrscheinlichkeitstheorie und Verw. Gebiete}, 2:340--368, 1964.

\bibitem[RS00]{RotaShen00_MR1779783}
Gian-Carlo Rota and Jianhong Shen.
\newblock On the combinatorics of cumulants.
\newblock volume~91, pages 283--304. 2000.
\newblock In memory of Gian-Carlo Rota.

\bibitem[Sch05]{Schultz05_MR2117954}
Hanne Schultz.
\newblock Non-commutative polynomials of independent {G}aussian random matrices. {T}he real and symplectic cases.
\newblock {\em Probab. Theory Related Fields}, 131(2):261--309, 2005.

\bibitem[Shc11]{Shcherbina11_MR2829615}
M.~Shcherbina.
\newblock Central limit theorem for linear eigenvalue statistics of the {W}igner and sample covariance random matrices.
\newblock {\em J. Math. Phys. Anal. Geom.}, 7(2):176--192, 197, 199, 2011.

\bibitem[Sil90]{Silverstein90_MR1062064}
Jack~W. Silverstein.
\newblock Weak convergence of random functions defined by the eigenvectors of sample covariance matrices.
\newblock {\em Ann. Probab.}, 18(3):1174--1194, 1990.

\bibitem[Sil22]{Silverstein22_MR4480828}
Jack~W. Silverstein.
\newblock Weak convergence of a collection of random functions defined by the eigenvectors of large dimensional random matrices.
\newblock {\em Random Matrices Theory Appl.}, 11(4):Paper No. 2250033, 33, 2022.

\bibitem[Sos00]{Soshnikov00_MR1797877}
Alexander Soshnikov.
\newblock The central limit theorem for local linear statistics in classical compact groups and related combinatorial identities.
\newblock {\em Ann. Probab.}, 28(3):1353--1370, 2000.

\bibitem[Sta97]{Stanley97_MR1442260}
Richard~P. Stanley.
\newblock {\em Enumerative combinatorics. {V}ol. 1}, volume~49 of {\em Cambridge Studies in Advanced Mathematics}.
\newblock Cambridge University Press, Cambridge, 1997.
\newblock With a foreword by Gian-Carlo Rota, Corrected reprint of the 1986 original.

\bibitem[SW13]{SW13_MR3116567}
Philippe Sosoe and Percy Wong.
\newblock Regularity conditions in the {CLT} for linear eigenvalue statistics of {W}igner matrices.
\newblock {\em Adv. Math.}, 249:37--87, 2013.

\bibitem[tH74]{tHooft74_MR413809}
G.~'t~Hooft.
\newblock Magnetic monopoles in unified gauge theories.
\newblock {\em Nuclear Phys.}, B79:276--284, 1974.

\bibitem[Voi91]{Voiculescu91_MR1094052}
Dan Voiculescu.
\newblock Limit laws for random matrices and free products.
\newblock {\em Invent. Math.}, 104(1):201--220, 1991.

\bibitem[Wei78]{Weingarten}
Don Weingarten.
\newblock Asymptotic behavior of group integrals in the limit of infinite rank.
\newblock {\em Journal of Mathematical Physics}, 19(5):999--1001, 05 1978.

\bibitem[Zvo97]{Zvonkin97_MR1492512}
A.~Zvonkin.
\newblock Matrix integrals and map enumeration: an accessible introduction.
\newblock volume~26, pages 281--304. 1997.
\newblock Combinatorics and physics (Marseilles, 1995).

\end{thebibliography}
\bibliographystyle{alpha}

\end{document}